\documentclass[reqno,oneside,12pt]{amsart}
\usepackage{
  amsmath,
  amssymb,
  amsthm,
  geometry,
  paralist,
  thmtools,
  tikz,
  tikz-cd,
  verbatim,
  relsize, %mathlarger
  bbm, %blackboard numbers
  dsfont,
  mathtools,
  xifthen, % ifthenelse package
  mathrsfs, % math script
  calligra, %for calligraphy
  enumitem
}

\usepackage{tangle}

\usepackage[T1]{fontenc}
\usepackage[all,cmtip]{xy}
\usepackage{caption}
\usepackage{subcaption}
\usetikzlibrary{angles}

\renewcommand\labelenumi{(\roman{enumi})}
\renewcommand\theenumi\labelenumi

\newcommand{\unit}{\mathds{1}}
\newcommand{\Fun}{\operatorname{Fun}}

\RequirePackage{ifpdf}
\ifpdf
  \IfFileExists{pdfsync.sty}{\RequirePackage{pdfsync}}{}
  \RequirePackage[pdftex,
   colorlinks = true,
   urlcolor = myblue, % \href{...}{...} external (URL)
   citecolor = mygreen, % \cite{...}
   linkcolor = myred, % \ref{...} and \pageref{...}
   bookmarksopen=true,
   unicode, psdextra]{hyperref}
\else
  \RequirePackage[hypertex, unicode, psdextra]{hyperref}
\fi
\hypersetup{
    colorlinks=true,
    linkcolor=blue,
    filecolor=magenta,      
    urlcolor=cyan,
    citecolor = blue
} 

\RequirePackage{ae, aecompl, aeguill} % to have pretty pdf

\usepackage{quiver}

\usepackage{cleveref} %cref 

\usepackage{adjustbox}

\usepackage[maxbibnames=99,
			maxalphanames=10, 
			style=numeric,
            url=false]{biblatex}
\usepackage{xurl}

\usepackage{dynkin-diagrams}

\usepackage[boxsize=.5em]{ytableau}

\usetikzlibrary{arrows.meta} % not arrows, but arrows.meta
\newcommand{\strbarbto}{\mathbin{\tikz[baseline]\draw[arrows={-angle 90},yshift=0.75ex, thick] (0,0) -- (.95em,0);}}

\makeatletter
\providecommand\@dotsep{5}
\makeatother

\newcommand{\<}{\langle}
\renewcommand{\>}{\rangle}
\newcommand{\wt}{\widetilde}

\DeclareMathOperator{\id}{id}

\newcommand{\xra}{\xrightarrow}

\DeclareMathOperator{\colim}{colim}
\DeclareMathOperator{\coker}{coker}

\DeclareMathOperator{\Hom}{Hom}
\DeclareMathOperator{\iHom}{\mathscr{H}\text{\kern -3pt {\calligra\large om}}\,}
\DeclareMathOperator{\rad}{rad}
\DeclareMathOperator{\End}{End}
\DeclareMathOperator{\iEnd}{\mathscr{E}\text{\kern -3pt {\calligra\large nd}}\,}
\DeclareMathOperator{\Ext}{Ext}
\DeclareMathOperator{\iExt}{\mathscr{E}\text{\kern -3pt {\calligra\large xt}}\,\,}

\DeclareMathOperator{\img}{im}

\DeclareMathOperator{\Mat}{Mat}
\DeclareMathOperator{\op}{op}
\DeclareMathOperator{\fp}{fp}
\DeclareMathOperator{\ind}{ind}

\DeclareMathOperator{\coev}{coev}
\DeclareMathOperator{\ev}{ev}

\DeclareMathOperator{\proj}{\mathsf{proj}}
\newcommand{\Cproj}[1]{\ifthenelse{\isempty{#1}}{\cC\text{-}\proj}{#1\text{-}\proj}}
\DeclareMathOperator{\Gproj}{\mathsf{Gproj}}
\DeclareMathOperator{\inj}{\mathsf{inj}}
\newcommand{\Cinj}[1]{\ifthenelse{\isempty{#1}}{\cC\text{-}\inj}{#1\text{-}\inj}}

\DeclareMathOperator{\add}{\mathsf{add}}

\DeclareMathOperator{\pd}{pd}
\newcommand{\ipd}{\cC\!\pd}
\DeclareMathOperator{\Gpd}{Gpd}
\DeclareMathOperator{\dom}{dom}

\DeclareMathOperator{\FPdim}{FPdim}
\DeclareMathOperator{\fin}{fin\!}

\newcommand{\leftdual}[1]{#1^*}
\newcommand{\rightdual}[1]{{}^*\!#1}

\newcommand{\rightop}[1]{{}^\vee\!#1}

\newcommand{\Xdim}[1]{#1.\dim}

\newcommand{\mR}{\mathbb{R}}
\makeatletter
\let\@tl\triangleleft% Save these because we will overwrite them later.
\let\@tr\triangleright% ^
\newcommand*{\@smallTriangle}[2]{\vcenter{\hbox{\scalebox{0.75}{\ensuremath{#1#2}}}}}
\newcommand*{\@medTriangle}[2]{\vcenter{\hbox{\scalebox{0.90}{\ensuremath{#1#2}}}}}
\newcommand*{\lact}{\mathbin{\mathpalette\@smallTriangle\@tr}}
\newcommand*{\ract}{\mathbin{\mathpalette\@smallTriangle\@tl}}
\let\@btl\blacktriangleleft% Save these because we will overwrite them later.
\let\@btr\blacktriangleright% ^
\newcommand*{\blact}{\mathbin{\mathpalette\@medTriangle\@btr}}
\newcommand*{\bract}{\mathbin{\mathpalette\@medTriangle\@btl}}
\renewcommand*{\triangleright}{\lact}
\renewcommand*{\triangleleft}{\ract}
\renewcommand*{\blacktriangleright}{\blact}
\renewcommand*{\blacktriangleleft}{\bract}

\newcommand{\cP}{\mathcal{P}}
\newcommand{\cI}{\mathcal{I}}
\newcommand{\cC}{\mathcal{C}}
\newcommand{\cF}{\mathcal{F}}
\newcommand{\cM}{\mathcal{M}}
\newcommand{\cN}{\mathcal{N}}
\newcommand{\cA}{\mathcal{A}}
\newcommand{\cB}{\mathcal{B}}
\newcommand{\cX}{\mathcal{X}}
\renewcommand{\Vec}{\mathsf{Vec}}
\renewcommand{\vec}{\mathsf{vec}}
\newcommand{\Mod}{\mathsf{Mod}}
\renewcommand{\mod}{\mathsf{mod}}
\newcommand{\rep}{\mathsf{rep}}

\declaretheorem[numberwithin=section]{theorem}
\declaretheorem[sibling=theorem]{lemma}
\declaretheorem[sibling=theorem]{corollary}

\declaretheorem[sibling=theorem]{proposition}

\declaretheorem[sibling=theorem, style=remark]{remark}
\declaretheorem[sibling=theorem, style=definition]{definition}
\declaretheorem[sibling=theorem, style=definition]{example}

\newtheorem*{theorem*}{Theorem}

\crefname{lemma}{Lemma}{Lemma}
  \crefname{corollary}{Corollary}{Corollary}
  \crefname{theorem}{Theorem}{Theorem}
  \crefname{definition}{Definition}{Definition}
   \crefname{proposition}{Proposition}{Proposition}
\crefname{question}{Question}{Question}
 \crefname{section}{Section}{Section} 
   \crefname{construction}{Construction}{Construction}
   \crefname{generalization}{Generalization}{Generalization}
  \crefname{construction}{Construction}{Construction}
  \crefname{notation}{Notation}{Notation}
   \crefname{example}{Example}{Example}
  \crefname{remark}{Remark}{Remark}
  \crefname{fact}{Fact}{Fact}
  \crefname{conjecture}{Conjecture}{Conjecture}
  \crefname{motivation}{Motivation}{Motivation}  
  \crefname{figure}{Figure}{Figure}  
  \crefname{assumption}{Assumption}{Assumption}

\newcommand{\xtwoheadrightarrow}[2][]{%
  \xrightarrow[#1]{#2}\mathrel{\mkern-14mu}\rightarrow
}

\title[Subcategories with homological properties]{A subcategory with remarkable homological properties via actions of tensor categories}
\author{Kevin Coulembier}
\address{KC: School of Mathematics and Statistics, University of Sydney, NSW 2006, Australia}
\email{kevin.coulembier@sydney.edu.au}

\author{Edmund Heng}
\address{EH: School of Mathematics and Statistics, University of Sydney, NSW 2006, Australia}
\email{edmund.heng@sydney.edu.au}

\date{}

\begin{document}

\begin{abstract}
    By equipping the category $\cM$ of modules over a finite dimensional algebra with the action of a finite tensor category $\cC$, we introduce a subcategory $\cP\cM$ satisfying numerous remarkable properties: it is generating and cogenerating, functorially finite, closed under Auslander--Reiten translations (i.e.\ precluster tilting), and it computes the finitistic dimension.
    Through a functor category perspective, $\cP\cM$ is identified with the $\cC$-module functors that are projective, and we also prove a $\cC$-equivariant version of the Auslander(--Solberg) correspondence.
    %We show that $\cP\cM$ has an additive generator for tensor product of representation-finite algebras with certain self-injective Nakayama algebras, hence proving that these algebras have finite finitistic dimension.
    %As application, we use $\cP\cM$ to show that tensor products of representation-finite algebras with certain self-injective Nakayama algebras have finite finitistic dimensions.
    As examples, we show that $\cP\cM$ contains only finitely many indecomposable modules for some tensor product algebras.
    For some family of 1-Iwanaga--Gorenstein algebras introduced by Geiss--Leclerc--Schr\"oer, we show that $\cP\cM$ is exactly the subcategory of $\tau$-locally free modules.
\end{abstract}
\maketitle

\tableofcontents

\section{Introduction}
In the representation theory of finite-dimensional algebras, one is interested in the category $\cM = \mod_A$ of finite-dimensional $A$-modules.
While classifying all modules would be the ultimate goal, most algebras are of wild representation-type and understanding all modules for such algebras is unattainable.
Nonetheless, it is often interesting (and useful) to study special subcategories of $\cM$, in which asking for appropriate ``finite-type'' properties for them leads to new ways of understanding different families of algebras.
Examples of such subcategories include the subcategory of bricks, the subcategory of Gorenstein projectives, higher cluster tilting subcategories and more.

\iffalse
\begin{itemize}
    \item (Bricks) $\cM$ has finitely many bricks. This turns out to be equivalent to a wealth of different structures through $\tau$-tilting theory.
    \item (Cohen--Macaulay/Gorenstein-projectives) $\cM$ has finitely many indecomposable Gorenstein projectives. This has been studied via a generalised Auslander correspondence.
    \item $\cM$ has finitely many indecomposable modules $M$ whose proj.dim$(\tau^kM)<\infty$ for all $k \in \mathbb{Z}$. These has been classified by Geiss--Leclerc--Schroer for a family of $1$-Iwanaga--Gorenstein algebras, which carries Weyl group combinatorics.
    \item Higher Auslander correspondence. This was introduced by Iyama and has since been generalised to many different settings
\end{itemize}
\fi

The goal of this paper is to introduce new subcategories of $\cM$ through the (finite) categorical symmetries of $\cM$.
Precisely, we study $\cM$ equipped with a monoidal, exact functor from a \emph{finite tensor category} $\cC$ (\cref{defn:FTC}) into the category of right exact endofunctors of $\cM$:
\begin{equation} \label{eqn:C-mod-structure}
\begin{split}
    \cC&\to \End^{\mathrm{rex}}(\cM)\cong {}_A\mod_A \\
    X &\mapsto X \lact -.
\end{split}
\end{equation}
Under the framework of \cite{EGNO}, such a structure on $\cM$ makes $\cM$ a (finite) \emph{module category over} $\cC$ (\cref{defn:modulecat}).
Note that due to the existence of left and right duals in finite tensor categories, all endofunctors $X\lact -$ will necessarily have both left and right adjoints, and hence are exact.
However, they need not be invertible.

A large family of such structures can be obtained from actions and coactions of Hopf algebras $H$ on $A$ (see e.g.\ \cite{AM-module-cat-Hopf,KW_Hopf-path,CY-Taft_action_fda,KA_Hopf-Ore-path}\footnote{All of these works study classifications of (co)actions of Hopf algebras on algebras $A$ (typically for $A$ which are well understood). Our paper serves to point out that (co)actions of Hopf algebras -- even on complicated algebras $A$ -- are useful for studying the representation theory of $A$.}), which equips $\cM=\mod_A$ with the structure of a module category over the finite tensor category $\mod_{\leftdual{H}}$ and $\mod_H$ respectively.
In particular, having a finite group $G$ acting on $A$, and hence on $\mod_A$, can be viewed as a module category structure on $\mod_A$ over the (semisimple) finite tensor category $\vec_G = \mod_{\Bbbk^G}$.
However, $\cM$ can also be a module category over more ``exotic'' finite tensor categories that are not realised by Hopf algebras (see \cite{Heng_Coxquiver,EH_fusionquiver, Betz_fusion-action-quiver} for examples related to quivers).

Now given a $\cC$-module category structure on $\cM$, we consider the endofunctors coming from projectives in $\cC$: 
\[
\cP \coloneqq \{P\lact - \mid P \in \proj(\cC)\} \subseteq \End^{\mathrm{rex}}(\cM)
\]
and introduce the following subcategory of $\cM$ that behaves like the ``projection''\footnote{While $P\lact(P\lact M)$ is almost never $P\lact M$, we have that $P'\lact(P\lact M) \cong (P'\otimes P)\lact M$ with $P'\otimes P \in \proj(\cC)$, hence $\cP(\cP\cM) = \cP\cM$.} of $\cM$ by $\cP$:
\begin{equation} \label{eqn:PM}
    \cP\cM \coloneqq \add\{ P\lact M \mid P \in \proj(\cC), M \in \cM\} \subseteq \cM.
\end{equation}
We show that $\cP\cM$ satisfies numerous remarkable properties:
\begin{theorem}\label{thm:intro1}
    Let $\cM\cong \mod_A$ be a $\cC$-module category.
    \begin{enumerate}
        \item (\cref{cor:PM-precluster-tilting}) $\cP\cM\subseteq\cM$ is a (1-)precluster tilting subcategory of $\cM$ in the sense of Iyama--Solberg \cite{IS_preclustertilting}, concretely:
            \begin{enumerate}
                \item $\cP\cM$ is a generator and cogenerator of $\cM$.
                \item $\cP\cM$ is closed under Auslander--Reiten translates.
                \item $\cP\cM \subseteq \cM$ is functorially finite (i.e.\ $\cM$ has both left and right $\cP\cM$-approximations).
            \end{enumerate}
            
        \item \label{item:pd-from-PM} (\cref{prop:finite_pd=finite_ipd}) For any $M \in \cM$ with projective dimension $\pd(M) = d < \infty$, we have that $\pd(P\lact M) = d$ for all non-zero $P \in \proj(\cC)$.
        \item \label{item:Cproj=Gproj} (\cref{prop:Cprojdimprojdim} and \cref{cor:Cproj=Gproj}) Suppose $\pd(M') \leq d <\infty$ for all $M' \in \cP\cM$.
            Then $\cM$ is $d$-Iwanaga--Gorenstein and the Gorenstein projectives are exactly the $M \in \cM$ such that $P\lact M \in \proj(\cM)$ for all $P\in\proj(\cC)$ (such $M$ are called $\cC$-projectives; see \cref{defn:Cproj}).
    \end{enumerate}
\end{theorem}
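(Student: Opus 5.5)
The plan rests on the basic structural facts about a finite tensor category $\cC$ acting on $\cM=\mod_A$. Each functor $X\lact-$ is exact and has left adjoint $\rightdual{X}\lact-$ and right adjoint $\leftdual{X}\lact-$. Projectives of $\cC$ form a tensor ideal closed under duals. Each $P\lact-$ preserves projectives and injectives of $\cM$, since it is exact with an exact adjoint on either side. Finally, $\unit$ is a quotient of some projective $P_0$ (its projective cover) and a subobject of an injective $I_0$; in a finite tensor category projectives and injectives coincide, so $I_0\in\proj(\cC)$.

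For (i)(a), apply $-\lact M$ to $P_0\twoheadrightarrow\unit$. This gives an epimorphism $P_0\lact M\twoheadrightarrow M$, and dually a monomorphism $M\hookrightarrow I_0\lact M$. So $\cP\cM$ is generating and cogenerating.

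For (i)(c), fix $M$. I will show that $P_0\lact M\to M$ is a right $\cP\cM$-approximation. Take a map $f\colon P\lact N\to M$. By adjunction it corresponds to $N\to \leftdual{P}\lact M$. Composing $P_0\to\unit$ with the coevaluation-type map shows that $\unit\to P\otimes\leftdual{P}$ factors through $P_0$ when $P$ is projective. Hence $f$ factors through $(P\otimes \leftdual{P})\lact M\to M$, and this map in turn factors through $P_0\lact M\to M$ by projectivity of $P\otimes\leftdual P$. Left approximations are dual, via $M\to I_0\lact M$.

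For (i)(b), I will show that $P\lact-$ commutes with $\tau$ up to a twist. Since $P\lact-$ is an exact functor with exact adjoints preserving projectives, it sends a minimal projective presentation of $M$ to a projective presentation of $P\lact M$. Applying the Nakayama functor gives $\nu(P\lact-)\cong (P'\lact-)\nu$, where $P'$ is $P$ twisted by the double dual or distinguished invertible object; $P'$ is still projective, because $\nu$ is given by tensoring with $DA$ and the action is compatible with the Serre functor. Hence $\tau(P\lact M)\cong P'\lact\tau M$ up to projective summands, and $\tau$ of these summands vanishes. This is the main obstacle: establishing the module-functor structure on the Nakayama/Serre functor. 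I would cite the relative Serre functor theory of Fuchs--Schaumann--Schweigert and only check that projectivity is preserved under the twist.

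For (ii), exactness gives $\pd(P\lact M)\le d$. For the reverse inequality, take $N$ with $\Ext^d(M,N)\neq0$. Then $\Ext^d(P\lact M,P\lact N)\cong\Ext^d(M,\leftdual{P}\otimes P\lact N)$ by the exact adjunction. Since $\unit$ is a direct summand of $\leftdual P\otimes P$ up to a nonzero map $\unit\to\leftdual P\otimes P$ (coevaluation, which is a monomorphism), the long exact sequence together with $\Ext^{d+1}(M,-)=0$ shows that $\Ext^d(M,-)$ is right exact. A monomorphism $N\hookrightarrow \leftdual P\otimes P\lact N$ therefore induces an injection on $\Ext^d$, which is nonzero. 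So $\pd(P\lact M)=d$.

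For (iii), the cogenerator $I_0\lact DA$ lies in $\cP\cM$ and is injective, and likewise for the other side, so every injective has $\pd\le d$. Dually, applying the argument to $\cM^{\op}$ (a $\cC$-module category) gives $\operatorname{id}(A)\le d$, and hence $A$ is $d$-Iwanaga--Gorenstein. For any $M$, $\Ext^i(M,A)$ relates to $\Ext^i(P\lact M,\ldots)$ by adjunction. $M$ is Gorenstein projective iff $\Ext^{>0}(M,\proj)=0$. If $P\lact M$ is projective for all $P$, then $\Ext^{i}(M,Q)$ embeds into $\Ext^i(M,\leftdual P\otimes P\lact Q)=\Ext^i(P\lact M,P\lact Q)=0$, using the splitting argument as in (ii). Conversely, if $M$ is Gorenstein projective then $P\lact M$ is Gorenstein projective, and it lies in $\cP\cM$ and so has finite projective dimension; it is therefore projective.
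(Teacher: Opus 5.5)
\textbf{Part (ii).} The step ``$\Ext^d(M,-)$ is right exact, so the monomorphism $N\hookrightarrow(\leftdual{P}\otimes P)\lact N$ induces an injection on $\Ext^d$'' does not follow. Right exactness gives surjectivity along epimorphisms. The kernel of $\Ext^d(M,N)\to\Ext^d(M,Z\lact N)$, with $Z=\leftdual{P}\otimes P$, is the image of the connecting map from $\Ext^{d-1}(M,C)$, where $C$ is the cokernel, and this need not vanish. Nor is $\unit$ a summand of $Z$: $Z$ is projective, so that happens only for semisimple $\cC$.

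The injection genuinely fails. Let $\mathrm{char}\,\Bbbk=2$ and $\cC=\mod_{\Bbbk[\mathbb{Z}/2]}$, with regular module $P$ and $x=1+g$. Let $\cM=\cC\boxtimes\mod_{\Bbbk A_2}$, whose objects are morphisms $V_1\xrightarrow{\phi}V_2$ in $\cC$, acted on termwise. Then $M=(P\xrightarrow{x}P)$ has $\pd(M)=1$, and $\Ext^1_\cM(M,N)\cong N_2/(\phi(N_1)+xN_2)$. For $N=(0\to\unit)$ this is $\Bbbk$. But any map $\unit\to Z$ lands in $\operatorname{soc}Z=xZ$, so the induced map to $\Ext^1_\cM(M,Z\lact N)\cong Z/xZ$ is zero.

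The fix is to use an epimorphism instead, as the paper does. By right exactness, $\Ext^d(M,-)$ stays non-zero on a projective cover of $N$ of the form $P'\lact N'$. Adjunction then moves $P'$ onto $M$, and you still need that $\pd(P\lact M)=\ipd(M)$ is independent of $P$ (\cref{prop:Cprojdimprojdim}). Alternatively, write $P\lact-\dashv P^\vee\lact-$ and choose an evaluation epimorphism $P^\vee\otimes W\twoheadrightarrow\unit$. Then $\Ext^d(P\lact M,W\lact N)\cong\Ext^d(M,(P^\vee\otimes W)\lact N)\twoheadrightarrow\Ext^d(M,N)\neq0$ directly.

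\textbf{Part (iii).} There is a gap at ``dually, applying the argument to $\cM^{\op}$''. The hypothesis bounds only projective dimensions on $\cP\cM$. The dual argument would need a bound on the injective dimensions of objects of $\cP\cM$, which you never establish. From $\pd(DA)\le d$ alone you cannot get finite injective dimension of $A$ on the other side; that is the open Gorenstein symmetry question.

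The missing line is the adjunction $\Ext^{d+1}_\cM(M,P\lact N)\cong\Ext^{d+1}_\cM(\leftdual{P}\lact M,N)=0$. It shows every object of $\cP\cM$, in particular every projective, has injective dimension $\le d$. The paper avoids this by bounding $\Gpd(\cM)\le\ipd(\cM)=\sup_M\pd(P\lact M)\le d$ via \cref{prop:Cprojdimprojdim} and \cref{prop:Cproj-Gproj}.

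The embedding $\Ext^i(M,Q)\hookrightarrow\Ext^i(M,(\leftdual{P}\otimes P)\lact Q)$ also cannot rest on ``the splitting argument as in (ii)''. For projective $Q$, use instead that $Q$ is a summand of $P_0\lact Q$ and $\Ext^i(M,P_0\lact Q)\cong\Ext^i(\leftdual{P_0}\lact M,Q)=0$, as in \cref{lem:orth}. The remaining steps of (iii) agree with the paper.

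\textbf{Part (i).} Your (i)(a) and (i)(c) match the paper's Lemma; your lifting of the counit $(P\otimes P^\vee)\lact M\to M$ along $P_0\twoheadrightarrow\unit$ is a slightly more direct version of it.

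For (i)(b) you take a different route from the paper, which passes to $\Fun^{\fp}_\cC(\cM,\cC)$ and compares minimal right $\cC$-projective approximations (\cref{thm:PM-ARtrans}). Your route works, and the ``main obstacle'' is elementary. For projective $Q$,
$\Hom(M,\nu(X\lact Q))\cong D\Hom(X\lact Q,M)\cong D\Hom(Q,\rightdual{X}\lact M)\cong\Hom(\rightdual{X}\lact M,\nu Q)\cong\Hom(M,\rightdual{\rightdual{X}}\lact\nu Q)$,
naturally in $Q$. So you need no relative Serre functor; those require $\cM$ exact, which is not assumed here.

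Applied to the non-minimal presentation $P\lact(Q_1\to Q_0)$, this gives $\rightdual{\rightdual{P}}\lact\tau M\cong\tau(P\lact M)\oplus\nu R$ with $\nu R$ injective (not projective, as you wrote). Hence $\tau$ preserves $\cP\cM$, and $\tau^{-}$ is dual. This is shorter and yields an explicit formula. The paper's argument instead lives in the functor-category framework it reuses for the equivariant Auslander correspondence.
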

We emphasise that being a $\cC$-module category is a \emph{structure} on $\cM$ and \emph{not a property}: even over the same $\cC$, $\cM$ could be equipped with many different $\cC$-module category structures.
This is a generalisation of the fact that $\cM$ can be equipped with many different group actions.
Accordingly, $\cP\cM$ \emph{depends} on the $\cC$-module structure.
In some sense, the ``best'' $\cC$-module structure on $\cM$ is the one that leads to the smallest possible\footnote{Note that we always have $\proj(\cM)\subseteq\cP\cM$, and $\cM$ is said to be \emph{exact} if equality is attained (\cref{defn:modulecat}). From this point of view the best $\cC$-module structures are simply the exact ones. However, \cref{thm:intro1} implies that exact module categories have to be $0$-Iwanaga--Gorenstein (i.e.\ quasi-Frobenius), hence exact module structures cannot be realised on an arbitrary abelian category.} $\cP\cM$.
Thus we make the following definition:
\begin{definition}
    We say that a $\cC$-module category $\cM$ is \emph{$\cC$-rep-finite} if $\cP\cM$ has only finitely many indecomposables, i.e.\ there exists $\overline{M} \in \cP\cM$ such that $\add(\overline{M}) = \cP\cM$.
\end{definition}
One should think of $\cC$-rep-finite as the analogue (and generalisation) of representation-finite in the presence of a $\cC$-module structure.
This is (partly) motivated by \cref{thm:linearisingCfunctor} and the following correspondence classifying $\cC$-rep-finite module categories, which we call the \emph{$\cC$-equivariant version of Auslander(--Solberg) correspondence} ($\ipd(-)$ below denotes the \emph{$\cC$-projective dimension}; see \cref{defn:Cprojdim}):
\begin{theorem}[\cref{thm:equivariant-Aus-cor}]
    There is a bijection between the following two families of $\cC$-module categories (up to $\cC$-module equivalences):
    \begin{enumerate}
        \item $\cC$-rep-finite $\cC$-module categories $\cM \cong \mod_A$;
        \item $\cC$-module categories $\cN \cong \mod_B$ satisfying $\ipd(\cN) \leq 2 \leq \dom.\dim(\cN)$.
    \end{enumerate}
\end{theorem}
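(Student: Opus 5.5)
The plan is to transport the classical Auslander correspondence (in the relative form of Auslander--Solberg) through the $\cC$-equivariant functor-category picture. Given a $\cC$-rep-finite $\cM\cong\mod_A$, choose an additive generator $\overline{M}$ of $\cP\cM$ and put $B=\End_{\cM}(\overline{M})^{\op}$, so that $\cN:=\mod_B$ is the category of finitely presented contravariant functors on $\cP\cM$. Conversely, given $\cN\cong\mod_B$ with $\ipd(\cN)\le 2\le \dom.\dim(\cN)$, the subcategory $\cP\cN$ should play the role of the projective-injectives. The module $\cM$ is then recovered as modules over the endomorphism algebra of an additive generator of the relevant $\cC$-projective-injective objects in $\cN$. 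The two assignments should be mutually inverse up to $\cC$-module equivalence.

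\emph{Step 1: make $\cN$ a $\cC$-module category.} Since $\cP\cM$ is stable under $P\lact-$ for $P\in\proj(\cC)$, and more generally under $X\lact -$ for any $X\in\cC$ (because $X\otimes P$ is projective), $\cC$ acts on $\Fun(\cP\cM^{\op},\Vec)$ by precomposition with the duals. This uses the adjunction $\Hom(X\lact -,-)\cong\Hom(-,\leftdual{X}\lact -)$. The action is exact and monoidal, and it preserves finitely presented functors because it sends representables $\Hom(-,N)$ to representables $\Hom(-,\rightdual{X}\lact N)$ or the analogous dual. I would identify this with the functor-category description of $\cP\cM$ as projective $\cC$-module functors mentioned in the abstract. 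Under this identification, $\cP\cN$ corresponds to the representables $\Hom(-,P\lact M)$. These are projective, and also injective, since $\Hom(-,P\lact M)\cong \Hom(\rightdual{P}\lact -, M)$ is exact on $\cP\cM$ once one uses that $\cP\cM$ contains the injectives and that $P\lact-$ maps into $\cP\cM$.

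\emph{Step 2: verify the homological conditions on $\cN$.} For $\dom.\dim\ge2$, I would resolve a projective $\Hom(-,N)$ with $N\in\cP\cM$ by applying $\Hom(-,-)|_{\cP\cM}$ to an injective copresentation $0\to N\to I^0\to I^1$ in $\cM$. After replacing injectives by $P\lact$-objects, this gives $0\to\Hom(-,N)\to \Hom(-,P\lact N')\to\cdots$ with terms in $\cP\cN$. Here I would use the counit/unit $P\lact\rightdual{P}\lact N\to N$ for the unit-generating projective $P\twoheadrightarrow\unit$, which yields a left $\cP\cN$-approximation. For $\ipd(\cN)\le2$, I would show that $P\lact F$ is projective of projective dimension at most $2$ for each finitely presented $F$. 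Take a presentation $\Hom(-,N_1)\to\Hom(-,N_0)\to F\to 0$ and the kernel of $N_1\to N_0$ in $\cM$. This kernel need not lie in $\cP\cM$, but $P\lact\ker$ does, by exactness of $P\lact-$. That gives the usual length-$2$ resolution after acting by $P$.

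\emph{Step 3: the converse.} Given $\cN$ as in (ii), let $Q$ be an additive generator of the $\cC$-stable projective-injectives singled out by $\dom.\dim\ge 2$. Set $A=\End(Q)^{\op}$, with the $\cC$-action on $\mod_A$ induced via $\Hom_{\cN}(Q,-)$ and the double-centralizer property coming from $\dom.\dim\ge2$. The condition $\ipd\le2$ is then used to show that the image of $\cN$'s projectives is exactly $\cP\mod_A$. Finally, mutual inverseness follows as in Auslander--Solberg, with equivariance checked on representables.

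I expect the main obstacle to be matching the $\cC$-twisted conditions precisely: showing that $\cP\cN$ equals the projective-injectives relevant for the dominant dimension, and that $\ipd\le 2$, rather than ordinary global dimension $\le 2$, is exactly what corresponds to $\cP\cM$ being closed under $\cC$ and functorially finite. I would attack this first by computing everything on representables and then extending using exactness of $P\lact-$.
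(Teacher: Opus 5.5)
Your proof of $\ipd(\cN)\le 2$ in the forward direction is correct; it is \cref{thm:functor-cat-Cprojdim} in contravariant form. The gap is the identification that the rest of your plan rests on, which is false.

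The representables $\Hom(-,P\lact M)|_{\cP\cM}$ are projective in $\cN=\Fun^{\fp}(\cP\cM^{\op},\vec)$, but they are not injective in general, and they do not describe $\cP\cN$. Already for $\cC=\vec$ one has $\cP\cN=\cN$. For $\cM=\mod_{\Bbbk A_2}$ the Auslander algebra has a simple projective that is not injective. The projective-injectives of $\cN$ are exactly the $\Hom(-,I)|_{\cP\cM}$ with $I\in\inj(\cM)$. Moreover, $\cP\cN\subseteq\proj(\cN)$ would make $\cN$ exact and hence quasi-Frobenius (\cref{rmk:PMforexact}). What is true is $\cP(\Cproj{}(\cN))=\proj(\cN)$.

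So the step you flag as the main obstacle, showing that ``$\cP\cN$ equals the projective-injectives'', cannot be carried out. This breaks two concrete steps:
\begin{enumerate}
\item For $\dom.\dim(\cN)\ge 2$ you must keep $I^0,I^1$ injective in $\cM$. The map $P\lact N\to N$ induced by $P\twoheadrightarrow\unit$ is an epimorphism, i.e.\ a \emph{right} $\cP\cM$-approximation (\cref{lem:functoriallyfinite}). Replacing the injectives by arbitrary $P\lact$-objects gives terms that are projective but not injective.
\item An inverse built from an additive generator of $\cP\cN$, or of all representables $\Hom(-,\overline{M})$, returns $\mod_{\End(\overline{M})^{\op}}=\cN$ rather than $\cM$.
\end{enumerate}

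The converse is also missing the step where $\ipd(\cN)\le 2$ is actually used. Since $\ipd(\cN)<\infty$, \cref{prop:Cproj-Gproj} and \cref{cor:Cproj=Gproj} give $\Cproj{}(\cN)=\Gproj(\cN)$ and $\Gpd(\cN)\le 2$. Hence $\cN$ is minimal Auslander--Gorenstein, and $\Gproj(\cN)=\{N:\dom.\dim(N)\ge 2\}$. This is exactly the subcategory on which your functor $\Hom_\cN(Q,-)$ is an equivalence onto $\mod_{\End(Q)^{\op}}$, where $Q$ is an additive generator of $\proj(\cN)\cap\inj(\cN)$.

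Because this subcategory equals $\Cproj{}(\cN)$, it is stable under every $X\lact-$ (\cref{lem:CprojclosedunderC}), so the $\cC$-action restricts to it. Its $\cP$ is $\proj(\cN)$: $P\lact G$ is projective for $\cC$-projective $G$, and every projective $R$ is a summand of $P_\unit\lact R$, where $P_\unit\twoheadrightarrow\unit$ is the projective cover. This gives $\cC$-rep-finiteness.

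The paper simply takes $\cM:=\Cproj{}(\cN)$. Mutual inverseness then follows in one direction from \cref{prop:C-projrepresentable} (the $\cC$-projectives of $\Fun^{\fp}_\cC(\rightop{\cM},\cC)$ are exactly the $\iHom_\cM(-,M)$) together with \cref{cor:intHom-Yon-Emb}. In the other direction it follows from $\cP\cM=\proj(\cN)$ and $\Fun^{\fp}(\proj(\cN)^{\op},\vec)\cong\cN$.
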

Note that our correspondence reduces to the classical Auslander correspondence by choosing $\cC=\vec$ (trivial symmetry), where $\cP\cM=\cM$ and $\ipd(\cN) = \pd(\cN)$.

Analogous to the classical setting, our correspondence is obtained through the consideration of \emph{$\cC$-module functor categories} via an enriched Yoneda embedding; see \cref{sec:functor-cat} for more details.
In particular, under the enriched Yoneda embedding, $\cP\cM$ is identified with the subcategory of functors that are projective (whereas $\cM$ is identified with functors that are Gorenstein projective).

Before we discuss applications, let us illustrate our results above with the following simple (yet illuminating) example:
\begin{example}\label{eg:introA2}
    Assume $\mathrm{char}(\Bbbk)\neq 2$.
    Let $\hat{A}$ be the finite dimensional algebra with the following equivalent descriptions:
    \begin{itemize}
        \item $\hat{A} = \begin{psmallmatrix} B_2 & B_2 \\ 0 & B_2\end{psmallmatrix} = B_2 \otimes_\Bbbk \Bbbk A_2$, where $B_2 = \Bbbk[\varepsilon]/\<\varepsilon^2\>$ and $\Bbbk A_2$ is the $A_2$ quiver algebra.
        \item $\hat{A}$ is the quiver algebra
        $\begin{tikzcd}[ampersand replacement = \&]
        1 
        \ar[loop left,"\varepsilon_1"] 
        \ar[r,"\alpha"]
        \&
        2
        \ar[loop right,"\varepsilon_2"]
        \end{tikzcd}
        $
        modulo relations generated by $\varepsilon_i^2 =0$ and $\alpha\varepsilon_1 = \varepsilon_2\alpha$.
    \end{itemize}
    Note that an $\hat{A}$-module is the same as a morphism of $B_2$-modules and the two indecomposable projective $\hat{A}$-modules are $0 \to B_2$ and $B_2 \xrightarrow{=} B_2$.
    
    One can equip $\cM\coloneqq \mod_{\hat{A}}$ with the structure of a module category over $\cC \coloneqq \mod_H$, where 
    \[
    H \coloneqq T_2(-1) = \Bbbk\<x,g\>/\<g^2=1, x^2=0, gx=-xg\>
    \]
    is the Hopf algebra known as the (4-dimensional) Taft algebra; see \cref{eg:speciesA2}.
    For now the important point is that $\cC$ has exactly two indecomposable projectives $P_0$ and $P_1$ with $P_0\lact M \cong P_1\lact M$ for all $M \in \mod_{\hat{A}}$.
    
    We depict in \cref{fig:introA2} the images of $P_i\lact -$ on the indecomposables, where we draw one thick red arrow $M {\color{red}{\strbarbto}} N$ for each indecomposable summand $N$ of $P_i \lact M$ (similar to a ``McKay graph'' associated to $P_i\lact -$).
    The indecomposable modules that are in $\cP\cM$ are therefore the target of these red arrows.
    We see that $\cP\cM \subsetneq \cM$, and this is typically\footnote{For $\cP\cM \neq \cM$, it is necessary that $\cC$ is a non-semisimple tensor category.} the case.

    It is easy to check from \cref{fig:introA2} that $\cP\cM$ is a precluster tilting subcategory.
    One also sees that all objects in $\cP\cM$ have projective dimension $\leq 1$, so it follows from \cref{thm:intro1} that $\hat{A}$ is 1-Iwanaga--Gorenstein.
    %Note that an object in $\cM$ has finite projective dimension $\leq 1$ if and only if it has $B_2^{\oplus m}$ on the vertices of the quiver (locally free).
    %It is also well-known that $\hat{A}$ is 1-Iwanaga--Gorenstein.
    Moreover, the Gorenstein projectives are exactly the modules whose image under $P_i\lact -$ are projective, so there are exactly 3 indecomposable Gorenstein projectives that are not projective: $0\to \Bbbk$, $\Bbbk \xrightarrow{=} \Bbbk$ and $\Bbbk \hookrightarrow B_2$.
    One also observes that $P_i\lact -$ indeed preserves the projective dimension of modules with finite projective dimension.
    Finally, under the $\cC$-equivariant Auslander correspondence, we get a $\cC$-module category $\cN$ that is equivalent to the category of modules over the endomorphism algebra of the additive generator of $\cP\cM$, which in this case is isomorphic to $B_2\otimes_\Bbbk \mathrm{Aus}(A_2)$ with $\mathrm{Aus}(A_2)$ denoting the (classical) Auslander algebra of the $A_2$ quiver.
\end{example}

\begin{figure}
    \centering
    \tikzset{MK/.style = {draw=red, % just to demonstrate, where MK is used
                      line width=#1, -{Straight Barb[length=3pt]}},
         MK/.default=1pt
        }
    \adjustbox{scale=0.8,center}{%
    \begin{tikzcd}
	& {\color{red} \boldsymbol{0\to B_2}} && {\color{red} \boldsymbol{B_2 \to 0}} & \\
	{0 \to \Bbbk} && {B_2 \xrightarrow{\neq}B_2} && {\Bbbk \to 0} \\
	&& {\color{red} \boldsymbol{B_2 \xrightarrow{=} B_2}} \\
	& {B_2 \twoheadrightarrow \Bbbk} && {\Bbbk \hookrightarrow B_2} \\
	{\Bbbk \to 0} && {\Bbbk\xrightarrow{=}\Bbbk} && {0\to\Bbbk}
	\arrow[MK, from=1-2, to=1-2, loop, in=40, out=140, distance=12mm]
	\arrow[MK, from=1-2, to=1-2, loop, in=60, out=120, distance=5mm]
	\arrow[color={rgb,255:red,191;green,191;blue,191}, from=1-2, to=2-3]
	\arrow[color={rgb,255:red,191;green,191;blue,191}, dashed, from=1-4, to=1-2]
	\arrow[MK, from=1-4, to=1-4, loop, in=40, out=140, distance=12mm]
	\arrow[MK, from=1-4, to=1-4, loop, in=60, out=120, distance=5mm]
	\arrow[color={rgb,255:red,191;green,191;blue,191}, from=1-4, to=2-5]
	\arrow[color={rgb,255:red,191;green,191;blue,191}, from=2-1, to=1-2]
	\arrow[shift left, MK, curve={height=-6pt}, from=2-1, to=1-2]
	\arrow[color={rgb,255:red,191;green,191;blue,191}, from=2-1, to=4-2]
	\arrow[shift right, MK, curve={height=6pt}, from=2-3, to=1-2]
	\arrow[color={rgb,255:red,191;green,191;blue,191}, from=2-3, to=1-4]
	\arrow[shift left, MK, curve={height=-6pt}, from=2-3, to=1-4]
	\arrow[color={rgb,255:red,191;green,191;blue,191}, dashed, from=2-3, to=2-1]
	\arrow[MK, from=2-3, to=3-3]
	\arrow[color={rgb,255:red,191;green,191;blue,191}, from=2-3, to=4-4]
	\arrow[shift right, MK, curve={height=6pt}, from=2-5, to=1-4]
	\arrow[color={rgb,255:red,191;green,191;blue,191}, dashed, from=2-5, to=2-3]
	\arrow[MK, from=3-3, to=3-3, loop, in=220, out=320, distance=12mm]
	\arrow[MK, from=3-3, to=3-3, loop, in=240, out=300, distance=5mm]
	\arrow[color={rgb,255:red,191;green,191;blue,191}, from=3-3, to=4-2]
	\arrow[color={rgb,255:red,191;green,191;blue,191}, from=4-4, to=3-3]
	\arrow[shift left=4, MK, curve={height=-24pt}, from=4-2, to=1-4]
	\arrow[color={rgb,255:red,191;green,191;blue,191}, from=4-2, to=2-3]
	\arrow[shift left=2, MK, curve={height=-6pt}, from=4-2, to=3-3]
	\arrow[color={rgb,255:red,191;green,191;blue,191}, from=4-2, to=5-3]
	\arrow[shift right=3, MK, curve={height=24pt}, from=4-4, to=1-2]
	\arrow[color={rgb,255:red,191;green,191;blue,191}, from=4-4, to=2-5]
	\arrow[shift right=2, MK, curve={height=6pt}, from=4-4, to=3-3]
	\arrow[color={rgb,255:red,191;green,191;blue,191}, dashed, from=4-4, to=4-2]
	\arrow[color={rgb,255:red,191;green,191;blue,191}, from=4-4, to=5-5]
	\arrow[color={rgb,255:red,191;green,191;blue,191}, from=5-1, to=4-2]
	\arrow[MK, from=5-3, to=3-3]
	\arrow[color={rgb,255:red,191;green,191;blue,191}, from=5-3, to=4-4]
	\arrow[color={rgb,255:red,191;green,191;blue,191}, dashed, from=5-3, to=5-1]
	\arrow[color={rgb,255:red,191;green,191;blue,191}, dashed, from=5-5, to=5-3]
    \end{tikzcd}
    }
    \caption{AR quiver of $\cM =\mod_{\hat{A}}$ from \cref{eg:introA2} placed on a M\"obius strip; cf.\ \cite[Figure 5]{GLS_quiverI}. The image of $P_i\lact -$ is illustrated in thick red arrows, and the indecomposables in $\cP\cM$ are in red (sinks).}
    \label{fig:introA2}
\end{figure}

\iffalse
The subcategory $\cP\cM$ behaves as the ``orthogonal'' subcategory to the resolving subcategory of \emph{$\cC$-projectives} that play the role of projectives in the theory of tensor categories:
\begin{equation}
    \begin{split}
        \Cproj{\cC}(\cM) \coloneqq \{ M \in \cM \mid P\lact M \in \proj(\cM) \text{ for all } P \in \proj(\cC)\};
        %\Cinj{\cC}(\cM) &\coloneqq \{ M \in \cM \mid P\lact M \in \inj(\cM) \text{ for all } P \in \proj(\cC)\}.
    \end{split}
\end{equation}
Precisely, we get the following (\cref{lem:PPM}):
\begin{align*}
    \Cproj{\cC}(\cM) \cap \cP\cM = \proj(\cM).
    %\Cinj{\cC}(\cM) \cap \cP\cM &= \inj(\cM).
\end{align*}
Moreover, we have the following chain of containments (\cref{prop:Cproj-Gproj}): 
\begin{align*}
    \proj(\cM) &\subseteq \Cproj{\cC}(\cM) \subseteq \Gproj(\cM),
    %\inj(\cM) &\subseteq \Cinj{\cC}(\cM) \subseteq \Ginj(\cM),
\end{align*}
where $\Gproj(\cM)$ denotes the subcategory of Gorenstein projectives.
\fi

One of our main applications of $\cP\cM$ is in relation to computing the finitistic dimension, whose definition we now recall.
For a subcategory $\cA \subseteq \cM$, the \emph{finitistic dimension} of $\cA$ is defined as follows:
    \[
    \fin.\pd(\cA) \coloneqq \sup \{ \pd(M)
        \mid M\in \cA, \ \pd(M)<\infty  
        \}.
    \]
The (small) \emph{finitistic dimension conjecture}\footnote{During final preparations of this paper, a counterexample to this conjecture was announced by an AI company.} then states that $\fin.\pd(\mod_A)$ is always finite.
We refer the reader to \cite{Z-H_fin-dim-conjecture} for a survey on this topic, noting that it is one of the ``strongest'' forms of homological conjectures; see \cite[\S 13.7]{FDatlas} for a summary.

To this end, note that \cref{thm:intro1}\ref{item:pd-from-PM} implies that one can compute the finitistic dimension of $\cM$ from the typically smaller subcategory $\cP\cM$:
\begin{theorem}[\cref{thm:PM-computes-findim}] \label{thm:intro-PM-findim}
    Let $\cM \cong \mod_A$ be a $\cC$-module category\footnote{The general setup of our paper allows for the case where $\tilde{\cM} \cong \Mod_A$, so that $\fin.\pd(\tilde{\cM}) = \fin.\pd(\cP\tilde{\cM})$ also applies. In other words, $\cP\tilde{\cM}$ computes the big finitistic dimension of $A$.}. We have 
    \[
    \fin.\pd(\cM) = \fin.\pd(\cP\cM).
    \]
    In particular, if $\cM$ is $\cC$-rep-finite, then $\cM$ has finite finitistic dimension.
\end{theorem}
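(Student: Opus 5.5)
The inequality $\fin.\pd(\cP\cM)\leq\fin.\pd(\cM)$ holds by definition, because $\cP\cM\subseteq\cM$. For the reverse inequality I will use \cref{thm:intro1}\ref{item:pd-from-PM}. Fix $M\in\cM$ with $\pd(M)=d<\infty$. Since $\cC$ is a finite tensor category, $\proj(\cC)$ contains a non-zero object $P$; for instance, the projective cover of the unit $\unit$. Then $P\lact M$ lies in $\cP\cM$ by the definition \eqref{eqn:PM}, and \cref{thm:intro1}\ref{item:pd-from-PM} gives $\pd(P\lact M)=d$. In particular $P\lact M\neq 0$, because $d\geq 0$ and a zero module would not have projective dimension $d$. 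So every finite value $\pd(M)$ occurring in $\cM$ also occurs in $\cP\cM$. Taking suprema gives $\fin.\pd(\cM)\leq\fin.\pd(\cP\cM)$, and hence equality.

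For the ``in particular'' clause, suppose $\cM$ is $\cC$-rep-finite, so that $\cP\cM=\add(\overline{M})$ with $\overline{M}=\bigoplus_{i=1}^n M_i$ and the $M_i$ indecomposable. The projective dimension of a finite direct sum is the maximum of the projective dimensions of its summands. Hence every object of $\cP\cM$ of finite projective dimension has $\pd\leq\max\{\pd(M_i)\mid \pd(M_i)<\infty\}$, which is a maximum over a finite set. Therefore $\fin.\pd(\cP\cM)<\infty$, and by the first part $\fin.\pd(\cM)<\infty$.

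The real content is thus entirely in \cref{thm:intro1}\ref{item:pd-from-PM} (\cref{prop:finite_pd=finite_ipd}), which I am allowed to assume. If I had to prove it, I would proceed as follows.

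\begin{enumerate}
\item Show $\pd(P\lact M)\leq d$. The functor $P\lact-$ is exact, and it preserves projectives because it has an exact right adjoint $\leftdual{P}\lact-$. So it sends a projective resolution of $M$ of length $d$ to a projective resolution of $P\lact M$ of length $d$.
\item Show $\pd(P\lact M)\geq d$. Here I would use that $\unit$ is a direct summand of $\leftdual{P}\otimes P$ when $P\neq 0$; this holds because $P\otimes\leftdual{P}\otimes P$ surjects onto $P$ compatibly. It then remains to use that $\Ext^d(M,-)\neq 0$ is detected after applying $\leftdual{P}\otimes P\lact-$.
\end{enumerate}

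Step (ii) is the main obstacle. The summand claim fails for general non-semisimple $\cC$, where $\leftdual{P}\otimes P$ is projective and so cannot contain $\unit$ as a summand. I would therefore instead argue with the non-vanishing $\Ext^d_\cM(M,N)\neq0$ for some $N$, which is possible since $d=\pd(M)$ is finite. Applying the adjunction gives $\Ext^d(P\lact M,P\lact N)\cong\Ext^d(M,(\leftdual{P}\otimes P)\lact N)$. Finally, I would exploit right exactness of $\Ext^d(M,-)$, which holds because $\pd(M)=d$, together with the surjection $(\leftdual{P}\otimes P)\lact N\twoheadrightarrow N$ coming from $\ev\colon\leftdual{P}\otimes P\to\unit$. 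This surjection is non-zero, hence epi onto the simple $\unit$, and so $\Ext^d(P\lact M,P\lact N)$ surjects onto $\Ext^d(M,N)\neq0$.
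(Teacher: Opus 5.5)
Your proof of the theorem is correct and is the paper's own argument. The inequality $\fin.\pd(\cP\cM)\le\fin.\pd(\cM)$ is trivial, and for $\pd(M)<\infty$, \cref{prop:finite_pd=finite_ipd} gives $\pd(P\lact M)=\pd(M)$ with $P\lact M\in\cP\cM$; your Krull--Schmidt argument for the rep-finite clause spells out what the paper calls immediate.

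One caveat concerns your optional sketch of the lower bound in step (ii), where the duals are mixed up.

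With the paper's conventions, $P\lact-$ has right adjoint $\rightdual{P}\lact-$ and left adjoint $\leftdual{P}\lact-$. The evaluation $\ev\colon\leftdual{P}\otimes P\twoheadrightarrow\unit$ is the counit of the left-adjoint pair. So the adjunction actually gives $\Ext^d(P\lact M,P\lact N)\cong\Ext^d(M,(\rightdual{P}\otimes P)\lact N)$. The canonical map attached to $\rightdual{P}\otimes P$ is the monic coevaluation $\unit\hookrightarrow\rightdual{P}\otimes P$, not an epimorphism.

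In any consistent convention, if $P\otimes-\dashv R\otimes-$, the canonical maps are the unit $\unit\to R\otimes P$ and the counit $P\otimes R\to\unit$. No evaluation $R\otimes P\to\unit$ is available. An epimorphism $\rightdual{P}\otimes P\twoheadrightarrow\unit$ exists exactly when $\Hom_\cC(P,\rightdual{\rightdual{P}})\neq0$. This holds, for example, when $P$ is the projective cover of $\unit$, but it needs an argument rather than an appeal to $\ev$.

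There are two clean repairs:
\begin{enumerate}
\item Test against $\rightdual{\rightdual{P}}\lact N$ instead of $P\lact N$. Then $\rightdual{P}\otimes\rightdual{\rightdual{P}}\to\unit$ is a genuine evaluation, so $\Ext^d(P\lact M,\rightdual{\rightdual{P}}\lact N)$ surjects onto $\Ext^d(M,N)\neq0$.
\item Do as the paper does and move $P$ into the first variable. Right exactness of $\Ext^n(M,-)$ lets you replace $N$ by a projective $P\lact N'\twoheadrightarrow N$. Then $\Ext^n(M,P\lact N')\cong\Ext^n(\leftdual{P}\lact M,N')$, which gives $\pd(\leftdual{P}\lact M)\ge n$ for the nonzero projective $\leftdual{P}$.
\end{enumerate}
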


Our next result provides a family of interesting examples of $\cM$ that are $\cC$-rep-finite via tensor product algebras (where $\cM$ itself could be representation-infinite).
\begin{theorem}[\cref{thm:uniserial}]
    Let $A$ be a representation-finite algebra and $B$ be a self-injective Nakayama algebra.
    Suppose that $\mod_B$ can be equipped with an exact (\cref{defn:modulecat}) $\cC$-module category structure for some finite tensor category $\cC$.
    Then $\cM\coloneqq \mod_{B\otimes_\Bbbk A}$ is a $\cC$-module category with 
    \[
    \cP\cM \cong \add\{ P\otimes_\Bbbk M \mid P \in \proj(\mod_B), M \in \mod_A\}.
    \]
    In particular, $\cM$ is $\cC$-rep-finite (and $\fin.\pd(\cM) < \infty$).
\end{theorem}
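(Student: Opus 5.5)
The module structure comes from the Deligne tensor product: $\mod_{B\otimes_\Bbbk A}\simeq \mod_B\boxtimes\mod_A$, and $\cC$ acts on the first factor. Concretely, write $X\lact N = F_X\otimes_B N$ for a $B$-bimodule $F_X$ representing $X\lact-$ on $\mod_B$. For $M\in\mod_{B\otimes A}$ we set $X\lact M\coloneqq F_X\otimes_B M$, where $M$ is viewed as a $B$-module and $F_X\otimes_B M$ keeps its commuting $A$-action. The associativity and unit constraints are inherited from those on $\mod_B$, so this is a $\cC$-module category, and each $X\lact-$ is exact because $F_X$ is projective as a right $B$-module (by exactness of $X\lact-$ on $\mod_B$). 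The inclusion $\supseteq$ is then immediate. For $P\in\proj(\cC)$ and $N\in\mod_B$, $M\in\mod_A$ we have $P\lact(N\otimes_\Bbbk M)\cong (P\lact N)\otimes_\Bbbk M$. Exactness of $\mod_B$ (\cref{defn:modulecat}) gives $P\lact N\in\proj(\mod_B)$, and every indecomposable projective $Q$ of $B$ occurs as a summand of some $P\lact N$, for instance as a summand of $P\lact Q'$ with $\unit$ a summand of $P\otimes \rightdual{P}$-type constructions. Using that $\cP\cM$ is additively closed and contains $\proj(\cM)$, it is simplest to show that $Q\lact$-type objects already generate; alternatively, use $\cP(\proj\mod_B)=\proj\mod_B$ for exact module categories.

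For $\subseteq$, let $L\in\mod_{B\otimes A}$ be arbitrary. Then $P\lact L = F_P\otimes_B L$ with $F_P$ a $B$-bimodule. The key claim is that, since $P\lact-$ lands in projectives, $F_P$ is projective as a left $B$-module. Indeed $F_P\cong P\lact B$ on the left, which is projective because $\mod_B$ is exact. Moreover $F_P$ is projective as a bimodule: the functor $P\lact -$ is a direct summand of $P\otimes\rightdual{P}\otimes P\lact -$. I would argue this via the internal Hom of the exact module category, showing that $P\lact-$ factors through functors of the form $Q\otimes_\Bbbk \Hom_B(Q',-)$, i.e.\ that $F_P$ lies in $\add(B\otimes_\Bbbk B)$. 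Exactness gives this: the action of projectives on an exact module category is by projective bimodules. If $F_P\cong\bigoplus B e_i\otimes_\Bbbk e_jB$, then $F_P\otimes_B L\cong \bigoplus Be_i\otimes_\Bbbk e_jL$, and $e_jL$ is an $A$-module. This places $P\lact L$ in $\add\{Q\otimes_\Bbbk M\}$, as required.

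\emph{Main obstacle.} The hardest step is showing that $F_P$ is a projective $B$-bimodule. For this I would use that $B$ is self-injective, so projective equals injective. $F_P$ is projective on both sides (its right adjoint is $\rightdual{P}\lact-$ or $P^*\lact -$, which also sends everything to projectives), and a functor that is exact and sends all modules to projective-injectives should be representable by projective bimodules. The argument runs as follows. Take the projective cover of the bimodule, $B\otimes B^{op}$-projective $\to F_P$; its kernel $K$ is again left- and right-projective. Then $\Ext^1_{B^e}(F_P,K)$ is computed by $\Hom$-tensor adjunction to vanish, using that $P\lact-$ maps into injectives and that $P$ is a summand of $P\otimes P^*\otimes P$. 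If this general route stalls, I would fall back on the Nakayama hypothesis and the explicit classification of exact module categories over uniserial self-injective $B$.

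Finally, $\cM$ is $\cC$-rep-finite because $\proj(\mod_B)$ and $\mod_A$ have finitely many indecomposables. Each $Q\otimes_\Bbbk M$ has finitely many indecomposable summands, so $\cP\cM=\add(\overline M)$ for $\overline M=\bigoplus Q\otimes_\Bbbk M$. Finiteness of $\fin.\pd(\cM)$ then follows from \cref{thm:intro-PM-findim}.
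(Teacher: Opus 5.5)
Your route is genuinely different from the paper's, and it works. However, the step carrying all the weight is only sketched, and the sketch as written does not go through.

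\textbf{How the paper argues.} The paper never mentions bimodules. It fixes $Q\in\proj(\cC)$ and, using uniseriality, chooses decompositions of $Q\lact N$ into the $P_a=Q\lact S_a$ by induction on Loewy length. These are chosen compatibly with the radical and socle sequences, so that every morphism $N_1\to N_2$ acts on $Q\lact N_1\to Q\lact N_2$ by a matrix of scalar multiples of identities. Applying this to $A\to\End_\cN(N)$ puts $Q\lact L$ in the image of $\proj(\cN)\otimes\mod_A$. In effect, the paper shows by hand, for uniserial $\cN$, that $Q\lact-$ behaves like a projective functor, which is exactly your key claim.

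\textbf{Fixing your key step.} You try to get $\Ext^1_{B^e}(F_P,K)=0$ for the kernel $K$ of a bimodule projective cover by Hom--tensor adjunction. That adjunction only computes Ext into coinduced bimodules $\Hom_\Bbbk(N,N')$, and $K$ is not of that form. The fix is to test against simple bimodules instead.
\begin{itemize}
\item $F_P$ is projective as a right $B$-module, since $F_P\otimes_B-\cong P\lact-$ is exact.
\item Hence a $B^e$-projective resolution of $F_P$ stays exact after $-\otimes_B N$, and its terms become projective left $B$-modules.
\item This gives
\[
\Ext^n_{B^e}\bigl(F_P,\Hom_\Bbbk(N,N')\bigr)\cong \Ext^n_B(F_P\otimes_B N,N')\cong\Ext^n_B(P\lact N,N'),
\]
which vanishes for $n\geq 1$ because $\cN$ is exact.
\item Since $\Bbbk$ is algebraically closed, every simple $B$-bimodule is $\Hom_\Bbbk(N,N')$ with $N,N'$ simple.
\item So $\Ext^1_{B^e}(F_P,-)$ kills all simple bimodules, and $F_P$ is a projective bimodule.
\end{itemize}
Self-injectivity of $B$, and $P$ being a summand of $P\otimes P^*\otimes P$, play no role.

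\textbf{A small error in the reverse inclusion.} $\unit$ is never a summand of $P\otimes\rightdual{P}$ unless $\cC$ is semisimple. Use instead that $R\lact Q\twoheadrightarrow Q$ splits, where $R\twoheadrightarrow\unit$ is the projective cover. Alternatively use $\cP\cN=\proj(\cN)$ for exact $\cN$, which you also mention.

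\textbf{What each approach buys.} Nothing in your argument uses that $B$ is Nakayama, or even self-injective. Once the lemma above is written out, it gives $\cP(\cN\boxtimes\mod_A)=\add\{Q\otimes_\Bbbk V\mid Q\in\proj(\cN),\,V\in\mod_A\}$ for every exact finite $\cC$-module category $\cN$. That is precisely the generalisation the paper says, in its subsection on the general case, it does not know how to prove. The paper's argument is elementary and explicit but tied to uniseriality. Yours is shorter and more general, provided the homological lemma is actually proved as above rather than asserted.
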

With respect to $\cM =\mod_{\hat{A}} = \mod_{B_2\otimes_\Bbbk \Bbbk A_2}$ from \cref{eg:introA2}, having 3 indecomposables in $\cP\cM$ agrees with the fact that there is exactly one indecomposable projective in $\mod_{B_2}$ and 3 indecomposable modules in $\mod_{\Bbbk A_2}$.
A full account of the family of self-injective Nakayama algebras $B$ for which we know (to the best of our knowledge) how to realise $\mod_B$ as an exact module category is provided in \cref{eg:tensorproduct} -- whether this is possible for all self-injective Nakayama algebras remains open.

Finally, we study in \cref{sec:species} \emph{representations of $\cC$-species}, which are the same as modules over tensor algebras in $\cC$. 
We show that these form $\cC$-module categories that are always 1-Iwanaga--Gorenstein (\cref{prop:species-1IG}).
We then apply this theory to study a family of 1-Iwanaga--Gorenstein algebras $H(C,D,\Omega)$ introduced by Geiss--Leclerc--Schr\"oer in \cite{GLS_quiverI} by identifying $\cM \coloneqq \mod_{H(C,D,\Omega)}$ with the representation category of an appropriate $\cC$-species (under certain assumptions on the symmetriser $D$), so that $\cM$ is now equipped with a $\cC$-module structure.
When $H(C,D,\Omega)$ is associated with a Dynkin diagram, we recover the subcategory of $\tau$-locally free modules as $\cP\cM$ (see \cref{prop:GLS}); this includes \cref{eg:introA2} (type $A_2$ with non-minimal symmetriser).

The study of a larger family of $H(C,D,\Omega)$ is carried out in a companion paper \cite{HP_rigid-locfree}, where we also apply such $\cC$-module structures to show that rigid modules are $\tau$-locally free, answering the question of Geiss--Leclerc--Schr\"oer in \cite{GLS_rigid-schur}.

\subsection*{Notation and convention}
We always fix $\Bbbk$ to be an algebraically closed field (not necessarily of characteristic zero).
In this paper, all categories will be additive $\Bbbk$-linear, and accordingly all functors (bifunctors etc.) will be additive $\Bbbk$-linear.
In an additive category, we use $\rad(-,-)$ to denote its radical ideal (of morphisms): 
\[
\rad(X,Y) = \{f:X \to Y \mid 1_X-gf \text{ is invertible for all } g:Y\to X \}.
\]
Unless explicitly stated otherwise, all abelian categories considered will have enough projectives.
For an abelian category $\cA$, we use $\proj(\cA)$ and $\inj(\cA)$ to denote its full subcategory of projective objects and injective objects respectively.
We use $\vec$ (resp.\ $\Vec$) to denote the category of finite dimensional (resp.\ all) vector spaces.

We say that an abelian category $\cA$ is \emph{finite} if it is Hom-finite (i.e.\ $\Hom$ spaces are finite-dimensional), finite-length, and has only finitely many simple objects up to isomorphisms (and has enough projectives as part of our blanket assumption).
Note that $\cA$ is finite if and only if there exists a finite-dimensional algebra $A$ such that $\cA \cong \mod_A$ -- it follows that $\cA^{\op}$ is also finite, hence $\cA$ also has enough injectives. Note that many of our results apply to abelian categories that are not necessarily finite (for example most of \cref{sec:homological-alg}).

When working with monoidal categories and their module categories, we will suppress associators in our proofs by appealing to Mac Lane's coherence theorem; see e.g.\ \cite[Theorem 2.8.5 and Remark 7.2.4]{EGNO}.

\subsection*{Outline}
\cref{sec:prelim} recalls some of the classical notions from abelian categories that we need.
\cref{sec:tensor-mod-PM} contains the definitions of finite tensor categories and their module categories, followed by the introduction of the main character of this paper: $\cP\cM$.
We also prove some preliminary results on $\cP\cM$, e.g.\ being functorially finite (\cref{lem:functoriallyfinite}).

In \cref{sec:homological-alg} we study homological algebraic notions relative to $\cC$-projectives (\cref{defn:Cproj}), where the relations to Gorenstein projectives are in \cref{sec:Gorenstein-proj}.
The main results \cref{thm:intro1}\ref{item:Cproj=Gproj}\&\ref{item:pd-from-PM} and \cref{thm:intro-PM-findim} are proven in this section.
We also recall the notion of internal Homs and Exts in \cref{sec:internal-hom-ext}.

In \cref{sec:functor-cat} we take on the functor category point of view and study (finitely presented) $\cC$-module functors. In \cref{sec:enriched-Yoneda}, we show that $\cP\cM$ is identified with the $\cC$-module functors that are projective under the enriched Yoneda embedding  using internal Homs (\cref{lem:projFunfp}), which allows us to describe the $\cC$-module functor category using $\cP\cM$ (\cref{thm:linearisingCfunctor}).
We also show that $\cP\cM$ is closed under Auslander--Reiten translates in \cref{sec:PM-AR-translate}, and conclude that $\cP\cM$ is a precluster-tilting subcategory (\cref{cor:PM-precluster-tilting}).
The $\cC$-equivariant Auslander correspondence (and its relation to the Auslander--Solberg correspondence) is shown in \cref{sec:equiv-Aus-cor}.

The last two sections contain our main examples. \cref{sec:example-tensor-alg} studies tensor products of representation-finite algebras with self-injective Nakayama algebras, whereas \cref{sec:species} studies $\cC$-species and 1-Iwanaga--Gorenstein algebras of Geiss--Leclerc--Schr\"oer.

\subsection*{Acknowledgement}
This work is a result of many wonderful suggestions from the people we have interacted with over the years. EH would like to thank all those who told him about the work of Geiss--Leclerc--Schr\"oer \cite{GLS_quiverI} during his talks on a generalisation of Gabriel's theorem \cite{Heng_Coxquiver}. The attempts to understand these algebras have ultimately led to this work. We would like to thank (in no particular order): Bernhard Keller for pointing out the categorical origin of the Auslander--Reiten translation; Matt Pressland for all of the suggested references on Gorenstein algebras; Sondre Kvamme and Julian K\"ulshammer for discussions on pro-species; Ryan Kinser for early discussions on tensor algebras in finite tensor categories; and Matti Stroi\'nski for numerous discussions on related topics. Finally, we are grateful to Cl\'ement Delcamp for the physical motivation behind viewing finite tensor categories as ``generalised symmetries''. The research of KC was supported by ARC grants FT220100125 and DP250100762, and EH was supported by DP230100654.

\subsection*{Statement on AI usage} 
We declare that no generative artificial intelligence were used in the mathematical development of this work, including the formulation of results and the construction of proofs. Generative AI was used only after a complete draft had been prepared, as an additional check of the completed work.

\section{Preliminaries}\label{sec:prelim}
Throughout this section, $\cA$ denotes an abelian category.

\subsection{Resolving subcategories and dimensions}
Let us first recall the notion of resolving subcategories; cf.\ \cite[\S1.1]{Holm_Gorenstein}, \cite{Auslander-Bridger_stable} or \cite{AB_CMapprox}.
\begin{definition}\label{defn:resolvingsubcat}
    Let $\cX \subseteq \cA$ be a full subcategory of $\cA$.
    We call $\cX$ a \emph{resolving subcategory} if the following properties hold:
    \begin{enumerate}
        \item \label{item:extclosed} $\cX$ is closed under extensions;
        \item \label{item:kerclose} $\cX$ is closed under taking kernels of epimorphisms; 
        \item \label{item:summandclosed} $\cX$ is closed under taking direct summands.
    \end{enumerate}
    It is a \emph{projectively resolving subcategory} if it moreover contains $\proj(\cA)$.
    For a resolving subcategory $\cX$, an \emph{$\cX$-resolution} of $M\in\cA$ is an exact sequence of the form (possibly unbounded on the left):
    \[
    \cdots \to E_n \to E_{n-1} \to \cdots \to E_1 \to E_0\to M \to 0,
    \] with $E_i\in\cX$ for all $i$.
    The length of such a resolution is the minimum $n$ such that $E_{j} = 0$ for all $j \geq n+1$; otherwise the length is infinite. 
    The \emph{$\cX$-dimension} of $M$, denoted by $\Xdim{\cX}(M) \in \mathbb{N \cup \{\infty\}}$, is defined as the length that is minimal over all of its $\cX$-resolutions.
    
    For any (full) subcategory $\cB\subseteq \cA$, we define the \emph{$\cX$-dimension of $\cB$} as 
    \[
    \Xdim{\cX}(\cB)\coloneqq \sup\{\Xdim{\cX}(M) \mid M \in \cB \},
    \]
    and the \emph{finitistic $\cX$-dimension of $\cB$} as
    \[
    \fin.\Xdim{\cX}(\cB) \coloneqq \sup\{\Xdim{\cX}(M) \mid M \in \cB: \Xdim{\cX}(M)<\infty \}.
    \]
\end{definition}
The category of projectives $\proj(\cA)$ is the prototypical example of a projectively resolving subcategory, and the corresponding notion of resolution and resolution dimension are the usual ones.
Throughout this paper, we will simply denote 
\begin{equation}\label{eqn:projdimshorthand}
    \begin{split}
        \pd(M) &\coloneqq \Xdim{\proj(\cA)}(M); \\
        \pd(\cB) &\coloneqq \Xdim{\proj(\cA)}(\cB); \\
        \fin.\pd(\cB) &\coloneqq \fin.\Xdim{\proj(\cA)}(\cB).
    \end{split}
\end{equation}

\subsection{Approximations, minimality and almost split sequences}
\begin{definition}\label{defn:approx}
    Let $\cX$ be a full subcategory of $\cA$ and let $M \in \cA$.
    A morphism $g: E \to M$ in $\cA$ with $E \in \cX$ is called a \emph{right $\cX$-approximation of $M$} if for all $E' \in \cX$, the induced map $\Hom_\cA(E',g):\Hom_\cA(E',E) \to \Hom_\cA(E',M)$ is surjective.
    Dually, a morphism $f: M \to E$ in $\cA$ with $E \in \cX$ is called a \emph{left $\cX$-approximation of $M$} if for all $E' \in \cX$, the induced map $\Hom_\cA(f,E'):\Hom_\cA(E,E') \to \Hom_\cA(M,E')$ is surjective. 
\end{definition}

\begin{definition}
    Let $\cX$ be a full subcategory of $\cA$.
    We say that $\cX$ is \emph{right (resp.\ left) functorially finite} if every object $M\in\cA$ has a right (resp.\ left) $\cX$-approximation.
    We say that $\cX$ is \emph{functorially finite} if $\cX$ is both right and left functorially finite.
\end{definition}

\begin{definition}
    Let $f: M \to N$ be a morphism in $\cA$.
    We say that $f$ is \emph{right minimal} if all $\varphi: M \to M$ such that $f \circ \varphi = f$ are automorphisms.
    Dually, we say that $f$ is \emph{left minimal} if all $\psi:N\to N$ such that $\psi\circ f = f$ are automorphisms.
\end{definition}

\begin{definition}
    Let $f: M \to N$ be a morphism in $\cA$.
    We say $f$ is \emph{right almost split} if $f$ is not a split epimorphism and for every $f':M' \to N$ that is not a split epimorphism, there exists $g:M' \to M$ such that $f\circ g = f'$.
    Dually, we say $f$ is \emph{left almost split} if $f$ is not a split monomorphism and for every $f':M \to N'$ that is not a split monomorphism, there exists $g:N \to N'$ such that $g\circ f = f'$.
\end{definition}

\begin{definition}
    A short exact sequence $0 \to M \xra{f} E \xra{g} N \to 0$ in $\cA$ is called an \emph{almost split sequence} if $f$ is left minimal and left almost split, and $g$ is right minimal and right almost split.
\end{definition}
Note that being in an almost split sequence $0\to M \to E \to N \to 0$ implies that $M$ and $N$ are both indecomposable objects in $\cA$.
If $\cA$ is a finite abelian category, then almost split sequences exist: any non-projective indecomposable $N$ fits into an almost split sequence $0 \to M \to E \to N \to 0$; similarly for any indecomposable non-injective $M$.
See e.g.\ \cite[Chapter IV.3]{ASS_elerep}.
Moreover, for an exact sequence $0\to M \xrightarrow{f} E \xrightarrow{g} N \to 0$ in a finite abelian category to be almost split, it is equivalent to having $f$ left almost split and $N$ indecomposable; or dually having $g$ right almost split and $M$ indecomposable.
We refer the reader to \cite[Theorem 1.13]{ASS_elerep} for other characterisations of almost split sequences.

\section{Tensor categories, module categories and \texorpdfstring{$\cP\cM$}{PM}} \label{sec:tensor-mod-PM}
\subsection{Tensor categories and their module categories}
\begin{definition}[\protect{\cite[Definition 4.1.1 \& Chapter 6]{EGNO}}] \label{defn:FTC}
    A \emph{finite tensor category} $\cC$ is a finite abelian category $\cC$ equipped with a monoidal structure $-\otimes -: \cC \times \cC \to \cC$ such that the monoidal unit $\unit$ is simple and every object $X \in \cC$ has a left dual $\leftdual{X}$ and right dual $\rightdual{X}$.
    %A \emph{finite tensor category} is a tensor category whose underlying abelian category is finite.
\end{definition}
We note that in a finite tensor category $\cC$, both left and right duals of a projective object remain projective, hence $\proj(\cC) = \inj(\cC)$ \cite[Proposition 6.1.3]{EGNO}.
In other words, $\cC$ is a quasi-Frobenius category, i.e.\ $\cC$ is equivalent to the category of modules over a finite dimensional self-injective algebra.
Moreover, $X_1\otimes X_2 \in \cC$ is projective(-injective) if at least one of the $X_i$ is projective. 
%Indeed, $X_1\otimes-$ has exact left and right adjoints $\leftdual{X_1}\otimes -$ and $\rightdual{X_1}\otimes -$, hence preserves projectives(-injectives); similarly for $-\otimes X_2$.
As such, $-\otimes P$ and $P\otimes -$ for any $P\in \proj(\cC)$ sends exact sequences in $\cC$ to split exact sequences.

A large class of finite tensor categories can be obtained via $\mod_H$ for $H$ a finite dimensional Hopf algebra, where for $X, Y \in \mod_H$, $X \otimes Y$ is again a $H$-module via 
\[
h \cdot (x\otimes y) \coloneqq \sum h_{(1)}x \otimes h_{(2)}y,
\]
where $\Delta(h)=\sum h_{(1)}\otimes h_{(2)}$ and $\Delta$ is the comultiplication of $H$.
The Hopf algebra below will be used in \cref{sec:species}.
\begin{example}[Taft algebras]\label{eg:Taft}
    Fix $n \geq 1$ an integer and suppose $\zeta \in \Bbbk$ is a primitive $n$-th root of unity.
    Define $T_n(\zeta) \coloneqq \Bbbk\<g,x\>/\<g^n = 1, x^n=0, gx=\zeta xg\>$ to be the Hopf algebra with comultiplication and counit defined as follows:
    \[
    \Delta(g)=g\otimes g, \quad \Delta(x)=x\otimes 1 + g\otimes x, \qquad \epsilon(g)=1, \quad \epsilon(x)=0.
    \]
\end{example}

The following definition is a slight generalisation of \cite[Definition 7.3.1 \& 7.5.1]{EGNO} (we do not require local finiteness).
\begin{definition}\label{defn:modulecat}
    A \emph{left module category} $\cM$ over a finite tensor category $\cC$, or a \emph{left $\cC$-module category}, is an abelian category $\cM$ equipped with a bifunctor $-\lact - : \cC \times \cM \to \cM$ that is exact in both variables\footnote{Exactness in the second variable is implied by existence of (tensor) duals in $\cC$; the functor $X \lact -: \cM \to \cM$ has left adjoint $\leftdual{X}\lact -$ and right adjoint $\rightdual{X}\lact -$, hence must be exact.}, satisfying the associativity condition and unitality condition; see \cite[Definition 7.1.1]{EGNO}.
    A \emph{right $\cC$-module category} is defined similarly, with action bifunctor denoted by $- \ract -$.
    Following \cite[Definition 7.5.1]{EGNO}, a $\cC$-module category $\cM$ is said to be \emph{exact} if $P \lact M \in \proj(\cM)$ for all $P \in \proj(\cC)$.

    A \emph{$\cC$-module functor} is a functor $F: \cM_1 \to \cM_2$ between $\cC$-module categories equipped with a family of natural isomorphisms:
    \[
    s_{X,M}:F(X \lact M) \xrightarrow{\cong} X\lact F(M), \quad X \in \cC, M \in \cM_1,
    \]
    satisfying associativity (pentagon) and unital constraints; see \cite[Definition 7.2.1]{EGNO}.
\end{definition}
\begin{remark}
    This definition of a $\cC$-module category is equivalent to the one given in the introduction; see \cite[Proposition 7.3.3]{EGNO}. 
\end{remark}

Note that $\cC$, viewed as a (left or right) $\cC$-module category, is always exact, and that any exact $\cC$-module category is quasi-Frobenius (\cite[Remark 7.6.5]{EGNO}).
We note that being exact is a \emph{property} of a $\cC$-module category (and is not to be confused with exact \emph{structures}).

We also mention here a few salient properties of the functors $X\lact -$ (same can be said for $- \ract X$):
\begin{enumerate}
    \item $X\lact -$ has exact left and right adjoints given by $\leftdual{X}\lact -$ and $\rightdual{X}\lact -$ respectively. Hence $X\lact -$ preserves $\proj(\cM)$ and $\inj(\cM)$.
    \item For $X \neq 0 \in \cC$, $X\lact -$ reflects isomorphisms, as follows from the following commutative diagram induced by the epic counit $\leftdual{X} \otimes X \twoheadrightarrow \unit$ of the left dual and the monic unit $\unit \hookrightarrow \rightdual{X}\otimes X$ of the right dual:
    \[
    \begin{tikzcd}
        \leftdual{X} \otimes X \lact M \ar[r,two heads] \ar[d,"1\otimes 1 \lact f"] & M \ar[r, hook] \ar[d,"f"] & \rightdual{X} \otimes X \lact M \ar[d,"1\otimes 1\lact f"]\\
        \leftdual{X} \otimes X \lact N \ar[r, two heads] & N \ar[r, hook] & \rightdual{X} \otimes X \lact N\\
    \end{tikzcd}
    \]
\end{enumerate}

For $H$ a finite dimensional Hopf algebra, $\cM=\vec$ is always an exact module category over $\mod_H$ via the forgetful functor.
More generally, finite module categories over $\mod_H$ can be obtained as $\mod_A$ for a finite dimensional $H$-comodule algebra $A$ (i.e.\ $A$ is an algebra and the $H$-comodule structure $\lambda:A \to H \otimes A$ of $A$ is an algebra map).
Indeed, given $M \in \mod_A$ and $X \in \mod_H$, $X\otimes M$ is again an $A$-module via
\[
a\cdot (x\otimes m) \coloneqq \sum a_{(0)}\cdot x \otimes a_{(1)}\cdot m,
\]
where $\lambda(a) = \sum a_{(0)}\otimes a_{(1)}$.
The following provide examples of exact module categories related to the Taft algebras.
\begin{example}\label{eg:exactmod-Taft}
    Let $T_n(\zeta)$ be the Taft algebra from \cref{eg:Taft} and consider the algebras
    \[
    B_n(d) \coloneqq \Bbbk\<h,\varepsilon\>/\<h^d=1, \varepsilon^n=0, h\varepsilon = \zeta^m\varepsilon h \>.
    \]
    for $n=dm$.
    Note that $B_n(n)\cong T_n(\zeta)$ by definition and $B_n(d)$ are all self-injective Nakayama algebras: each $B_n(d)$ is isomorphic to $\Bbbk \wt{A}^c_{d-1}/R^n$ with $\wt{A}^c_{d-1}$ the cyclic quiver with $d$ vertices and $R$ the arrow ideal.
    We can equip $B_n(d)$ with the structure of a $T_n(\zeta)$-comodule algebra via
    \[
    \lambda(h)= g^m\otimes h, \quad \lambda(\varepsilon) = x\otimes 1 + g\otimes\varepsilon.
    \]
    Under this structure, $\mod_{B_n(d)}$ is an exact module category over $\mod_{T_n(\zeta)}$ -- when $\mathrm{char}(\Bbbk)=0$, these are all of those which are non-semisimple (and indecomposable), and we refer the reader to \cite[\S8.1]{Mombelli_uqsl2} and \cite[Theorem 4.10]{EO_FTC} for the full classification of exact finite module categories over $\mod_{T_n(\zeta)}$.
\end{example}
More examples of module categories will appear in \cref{sec:example-tensor-alg} and \ref{sec:species}.

\begin{definition}[\protect{\cite[Remark 7.1.5]{EGNO}}] \label{defn:inducedrightmod}
    Let $\cM$ be a left $\cC$-module category.
    We define $\rightop{\cM}$ to be the \emph{right} $\cC$-module category whose underlying category is the opposite category $\cM^{\op}$ of $\cM$, with $\cC$-action bifunctor $- \ract -: \cM^{\op} \times \cC \to \cM^{\op}$ defined by
    \[
    M \ract X \coloneqq \rightdual{X} \lact M.
    \]
\end{definition}

\subsection{The subcategory \texorpdfstring{$\cP\cM$}{PM} and some preliminary results}
\begin{definition}\label{defn:PM}
    Let $\cM$ be a $\cC$-module category. We define $\cP\cM$ to be the full subcategory of $\cM$ consisting of direct summands of objects $P\lact M$ over all $P\in\proj(\cC)$ and $M\in\cM$.
    We say that $\cM$ is \emph{$\cC$-rep-finite} if $\cP\cM$ has an additive generator, i.e.\ there exists $M \in \cM$ such that $\cP\cM=\add(M)$. When $\cM$ is finite as an abelian category, this is equivalent to $\cP\cM$ having only finitely many indecomposable objects up to isomorphism. 
\end{definition}

\begin{remark}\label{rmk:PMforexact}
    Note that $\proj(\cM) \subseteq \cP\cM$, and we have $\cP\cM=\proj(\cM)$ if and only if $\cM$ is exact as a $\cC$-module category (cf.\ \cref{defn:modulecat}).
    However, $\cP\cM$ need not be resolving in general. 
    This can be seen from $\cP\cM$ in \cref{fig:introA2}, which is not closed under taking extensions.
\end{remark}

\begin{lemma}\label{lem:functoriallyfinite}
    Let $K \in \cM$ and $\pi: Q \twoheadrightarrow \unit$ and $\iota: \unit \hookrightarrow \leftdual{Q}$ be the projective cover and injective envelope of the monoidal unit $\unit \in \cC$ respectively.
    Then 
    \[
    \pi\lact K : Q\lact K \twoheadrightarrow K; \quad \text{and} \quad \iota \lact K: K \hookrightarrow \leftdual{Q} \lact K
    \]
    are right and left $\cP\cM$-approximations of $K$ respectively (not necessarily minimal).
    As such, the subcategory $\cP\cM \subseteq \cM$ is functorially finite.
\end{lemma}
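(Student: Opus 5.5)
The plan is to verify the two approximation properties directly, using the adjunctions for the action functors and the fact that tensoring a morphism with a projective of $\cC$ produces a split map.

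First, $Q\lact K\in\cP\cM$ by definition. Since $\unit\hookrightarrow\leftdual{Q}$ is an injective envelope and $\proj(\cC)=\inj(\cC)$, the object $\leftdual{Q}$ is projective, so $\leftdual{Q}\lact K\in\cP\cM$ as well. Exactness of $-\lact K$ in the first variable shows that $\pi\lact K$ is epic and $\iota\lact K$ is monic.

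For the right approximation property, it suffices to test against $E'=P\lact M$ with $P\in\proj(\cC)$, since $\Hom$ is additive and lifting along direct summands is automatic. Take $f:P\lact M\to K$ and consider the commutative square built from $\pi$:
\[
\begin{tikzcd}
P\lact M \ar[r,"f"] & K\\
(Q\otimes P)\lact M \ar[u,"\pi\otimes 1\lact 1"] \ar[r,"1\otimes f"'] & Q\lact K \ar[u,"\pi\lact K"']
\end{tikzcd}
\]
This commutes by naturality (bifunctoriality) of $\lact$ together with unitality, $\unit\lact K\cong K$. The key input is that $\pi\otimes P:Q\otimes P\to \unit\otimes P\cong P$ is an epimorphism between objects of $\cC$. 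Since $-\otimes P$ sends exact sequences to split exact sequences, as recalled after \cref{defn:FTC}, this map is split. So there is a section $s:P\to Q\otimes P$ with $(\pi\otimes 1)s=1_P$. Then $g\coloneqq (1\otimes f)\circ(s\lact M): P\lact M\to Q\lact K$ satisfies $(\pi\lact K)\circ g=f\circ(\pi\otimes1\lact 1)\circ(s\lact M)=f$, which is the required lift.

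The left approximation is dual. Given $h:K\to P\lact M$, I use $\iota\otimes P:P\to\leftdual{Q}\otimes P$. This is monic, hence split by the same fact, with a retraction $r$. The candidate extension is $(r\lact M)\circ(1_{\leftdual{Q}}\lact h):\leftdual{Q}\lact K\to P\lact M$. Precomposing with $\iota\lact K$ and applying naturality gives $(r\lact M)\circ(\iota\otimes1\lact1)\circ h=h$.

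Functorial finiteness then follows immediately from the definition. The only delicate step is bookkeeping of associators and unitors in the naturality square, which is handled by coherence. Alternatively, one can phrase the argument via the adjunction $\leftdual{P}\lact-\dashv P\lact-$, but the splitting argument avoids that.
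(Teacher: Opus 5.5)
Your proof is correct, and it takes a more direct route than the paper's. The paper first reduces to test objects of the form $Q\lact X$, asserting without proof that every object of $\cP\cM$ is a summand of one. For $g\colon Q\lact X\to K$ it then builds a lift using the right dual $\rightdual{Q}$:
\begin{itemize}
\item insert the unit $\unit\to\rightdual{Q}\otimes Q$;
\item apply $g$;
\item use a section of the split epimorphism $\rightdual{Q}\otimes\pi$;
\item close up with the counit $Q\otimes\rightdual{Q}\to\unit$.
\end{itemize}
It verifies $(\pi\lact K)\circ h=g$ by a string-diagram computation.

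You instead split $\pi$ on the side adjacent to the module variable, i.e.\ you split $\pi\otimes P\colon Q\otimes P\to P$. Then associativity $(Q\otimes P)\lact M\cong Q\lact(P\lact M)$ and naturality of $\pi\lact-$ give the lift $(Q\lact f)\circ(s\lact M)$ immediately, with no duality or zigzag identities. Your route is shorter. It also makes explicit the fact that the paper's reduction step relies on: $P\lact M$ is a retract of $Q\lact(P\lact M)$ via $s\lact M$. Once that retraction is available, the approximation property follows from naturality alone, so the dual-based construction is not needed.

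The paper's version has the minor feature that its section depends only on $Q$ and not on the test object. Since you could equally restrict to $P=Q$, this is no real advantage. Your dual argument for $\iota\lact K$ is also correct: it uses a retraction of $\iota\otimes P$, which exists because $P$ is injective.
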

\begin{proof}
    We will only show that $\pi \lact K: Q\lact K \twoheadrightarrow K$ is a right $\cP\cM$-approximation of $K$; the left version follows from a dual argument.
    Since every object $X'$ in $\cP\cM$ is a summand of an object of the form $Q \lact X$, we shall take $X' = Q\lact X$ and consider morphisms $g: Q\lact X \to K$ without loss of generality.
    We shall show that there exists $h: Q\lact X \to Q\lact K$ such that
    \[
    (\pi \lact K) \circ h = g
    \]
    ($h$ need not be of the form $Q \lact h'$ for some $h': X \to K$).
    %This is equivalent to showing that the map \[ (Q\lact \pi)\circ - :\Hom_\cM(Q\lact X, Q\lact K) \to \Hom_\cM(Q\lact X, K) \] is surjective.
    
    Consider the right dual $\rightdual{Q}$ of $Q$, so that we have unit and counit morphisms: 
    \[
    \cup: \unit \to \rightdual{Q}\otimes Q; \quad \cap: Q\otimes \rightdual{Q} \to \unit
    \]
    satisfying the duality conditions.
    Applying $\rightdual{Q}\otimes-$ to $\pi$, we see that $\rightdual{Q}\otimes \pi: \rightdual{Q} \otimes Q \to \rightdual{Q}$ is an epimorphism onto a projective in $\cC$, hence has a splitting morphism $s: \rightdual{Q} \to \rightdual{Q}\otimes Q$.
    We define $h: Q \lact X \to Q \lact K$ to be the morphism given by the following composition:
    \[
    h \coloneqq (\cap \lact (Q\lact K))\circ ((Q\otimes s)\lact K)\circ ((Q\otimes \rightdual{Q})\lact g) \circ (Q\lact (\cup \lact X)).
    \]
    A direct calculation shows that $(\pi \lact K)\circ h = g$; a calculation using string diagrams is provided below (for $f\circ g$, $f$ is stacked on top of $g$).
    \begin{center}
\resizebox{0.9\textwidth}{!}{%
\begin{tangle}{(17,5)}
	\tgBorderA{(2,0)}{white}{white}{white}{white}
	\tgBorder{(2,0)}{0}{0}{1}{0}
	\tgBorderA{(3,0)}{white}{white}{white}{white}
	\tgBorder{(3,0)}{1}{0}{1}{0}
	\tgBorderC{(5,0)}{3}{white}{white}
	\tgBorderC{(6,0)}{2}{white}{white}
	\tgBlank{(7,0)}{white}
	\tgBorderA{(8,0)}{white}{white}{white}{white}
	\tgBorder{(8,0)}{1}{0}{1}{0}
	\tgBorderC{(10,0)}{3}{white}{white}
	\tgBorderC{(11,0)}{2}{white}{white}
	\tgBlank{(12,0)}{white}
	\tgBorderA{(13,0)}{white}{white}{white}{white}
	\tgBorder{(13,0)}{1}{0}{1}{0}
	\tgBorderC{(0,1)}{3}{white}{white}
	\tgBorderC{(1,1)}{2}{white}{white}
	\tgBorderA{(2,1)}{white}{white}{white}{white}
	\tgBorder{(2,1)}{1}{0}{1}{0}
	\tgBorderA{(3,1)}{white}{white}{white}{white}
	\tgBorder{(3,1)}{1}{0}{1}{0}
	\tgBorderA{(5,1)}{white}{white}{white}{white}
	\tgBorder{(5,1)}{1}{0}{1}{0}
	\tgBorderA{(6,1)}{white}{white}{white}{white}
	\tgBorder{(6,1)}{1}{0}{1}{0}
	\tgBorderA{(7,1)}{white}{white}{white}{white}
	\tgBorder{(7,1)}{0}{0}{1}{0}
	\tgBorderA{(8,1)}{white}{white}{white}{white}
	\tgBorder{(8,1)}{1}{0}{1}{0}
	\tgBorderA{(10,1)}{white}{white}{white}{white}
	\tgBorder{(10,1)}{1}{0}{1}{0}
	\tgBorderA{(11,1)}{white}{white}{white}{white}
	\tgBorder{(11,1)}{1}{0}{1}{0}
	\tgBorderA{(12,1)}{white}{white}{white}{white}
	\tgBorder{(12,1)}{0}{0}{1}{0}
	\tgBorderA{(13,1)}{white}{white}{white}{white}
	\tgBorder{(13,1)}{1}{0}{1}{0}
	\tgBorderA{(0,2)}{white}{white}{white}{white}
	\tgBorder{(0,2)}{1}{0}{1}{0}
	\tgBorderA{(1,2)}{white}{white}{white}{white}
	\tgBorder{(1,2)}{1}{1}{1}{0}
	\tgBorderC{(2,2)}{1}{white}{white}
	\tgBorderA{(3,2)}{white}{white}{white}{white}
	\tgBorder{(3,2)}{1}{0}{1}{0}
	\tgBorderA{(5,2)}{white}{white}{white}{white}
	\tgBorder{(5,2)}{1}{0}{1}{0}
	\tgBorderA{(6,2)}{white}{white}{white}{white}
	\tgBorder{(6,2)}{1}{1}{1}{0}
	\tgBorderC{(7,2)}{1}{white}{white}
	\tgBorderA{(8,2)}{white}{white}{white}{white}
	\tgBorder{(8,2)}{1}{0}{1}{0}
	\tgBorderA{(10,2)}{white}{white}{white}{white}
	\tgBorder{(10,2)}{1}{0}{1}{0}
	\tgBorderA{(11,2)}{white}{white}{white}{white}
	\tgBorder{(11,2)}{1}{1}{1}{0}
	\tgBorderC{(12,2)}{1}{white}{white}
	\tgBorderA{(13,2)}{white}{white}{white}{white}
	\tgBorder{(13,2)}{1}{0}{1}{0}
	\tgBorderA{(16,2)}{white}{white}{white}{white}
	\tgBorder{(16,2)}{1}{0}{1}{0}
	\tgBorderA{(0,3)}{white}{white}{white}{white}
	\tgBorder{(0,3)}{1}{0}{1}{0}
	\tgBorderA{(1,3)}{white}{white}{white}{white}
	\tgBorder{(1,3)}{1}{0}{1}{0}
	\tgBorderC{(2,3)}{3}{white}{white}
	\tgBorderA{(3,3)}{white}{white}{white}{white}
	\tgBorder{(3,3)}{1}{0}{1}{1}
	\tgBorderA{(5,3)}{white}{white}{white}{white}
	\tgBorder{(5,3)}{1}{0}{1}{0}
	\tgBorderA{(6,3)}{white}{white}{white}{white}
	\tgBorder{(6,3)}{1}{0}{1}{0}
	\tgBorderC{(7,3)}{3}{white}{white}
	\tgBorderA{(8,3)}{white}{white}{white}{white}
	\tgBorder{(8,3)}{1}{0}{1}{1}
	\tgBorderA{(10,3)}{white}{white}{white}{white}
	\tgBorder{(10,3)}{1}{0}{1}{0}
	\tgBorderA{(11,3)}{white}{white}{white}{white}
	\tgBorder{(11,3)}{1}{0}{1}{0}
	\tgBorderC{(12,3)}{3}{white}{white}
	\tgBorderA{(13,3)}{white}{white}{white}{white}
	\tgBorder{(13,3)}{1}{0}{1}{1}
	\tgBorderC{(15,3)}{3}{white}{white}
	\tgBorderA{(16,3)}{white}{white}{white}{white}
	\tgBorder{(16,3)}{1}{0}{1}{1}
	\tgBorderA{(0,4)}{white}{white}{white}{white}
	\tgBorder{(0,4)}{1}{0}{1}{0}
	\tgBorderC{(1,4)}{0}{white}{white}
	\tgBorderC{(2,4)}{1}{white}{white}
	\tgBorderA{(3,4)}{white}{white}{white}{white}
	\tgBorder{(3,4)}{1}{0}{1}{0}
	\tgBorderA{(5,4)}{white}{white}{white}{white}
	\tgBorder{(5,4)}{1}{0}{1}{0}
	\tgBorderC{(6,4)}{0}{white}{white}
	\tgBorderC{(7,4)}{1}{white}{white}
	\tgBorderA{(8,4)}{white}{white}{white}{white}
	\tgBorder{(8,4)}{1}{0}{1}{0}
	\tgBorderA{(10,4)}{white}{white}{white}{white}
	\tgBorder{(10,4)}{1}{0}{1}{0}
	\tgBorderC{(11,4)}{0}{white}{white}
	\tgBorderC{(12,4)}{1}{white}{white}
	\tgBorderA{(13,4)}{white}{white}{white}{white}
	\tgBorder{(13,4)}{1}{0}{1}{0}
	\tgBorderA{(15,4)}{white}{white}{white}{white}
	\tgBorder{(15,4)}{1}{0}{1}{0}
	\tgBorderA{(16,4)}{white}{white}{white}{white}
	\tgBorder{(16,4)}{1}{0}{1}{0}
	\tgCell{(3,3)}{g}
	\tgCell{(1,2)}{s}
	\tgCell{(2,0)}{\pi}
	\tgCell{(6,2)}{s}
	\tgCell{(7,1)}{\pi}
	\tgCell{(8,3)}{g}
	\tgCell[(1,2)]{(11.5,1.5)}{\id}
	\tgCell{(13,3)}{g}
	\tgCell{(16,3)}{g}
	\tgAxisLabel{(3.5,0)}{south}{K}
	\tgAxisLabel{(8.5,0)}{south}{K}
	\tgAxisLabel{(13.5,0)}{south}{K}
	\tgAxisLabel{(16.5,2)}{south}{K}
	\tgAxisLabel{(4,2.5)}{west}{=}
	\tgAxisLabel{(9,2.5)}{west}{=}
	\tgAxisLabel{(14,2.5)}{west}{=}
	\tgAxisLabel{(0.5,5)}{north}{Q}
	\tgAxisLabel{(3.5,5)}{north}{X}
	\tgAxisLabel{(5.5,5)}{north}{Q}
	\tgAxisLabel{(8.5,5)}{north}{X}
	\tgAxisLabel{(10.5,5)}{north}{Q}
	\tgAxisLabel{(13.5,5)}{north}{X}
	\tgAxisLabel{(15.5,5)}{north}{Q}
	\tgAxisLabel{(16.5,5)}{north}{X}
\end{tangle}
}
    \end{center}
\end{proof}

\begin{corollary}\label{cor:weakcokernels}
    The additive category $\cP\cM$ contains weak kernels and weak cokernels.
\end{corollary}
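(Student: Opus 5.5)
The plan is to deduce this directly from \cref{lem:functoriallyfinite} via the standard fact that a functorially finite subcategory of an abelian category has weak kernels and weak cokernels. I treat weak kernels; weak cokernels follow by the dual argument, using left approximations, or by passing to $\rightop{\cM}$, which is again a $\cC$-module category with the same $\cP$-subcategory viewed in the opposite category.

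Let $f: X \to Y$ be a morphism in $\cP\cM$. Since $\cM$ is abelian, form the kernel $k: \ker(f) \to X$ in $\cM$. By \cref{lem:functoriallyfinite}, the object $\ker(f)$ has a right $\cP\cM$-approximation $g: E \to \ker(f)$. For instance, one can take $E = Q\lact \ker(f)$ and $g = \pi\lact \ker(f)$. I claim that $k\circ g: E \to X$ is a weak kernel of $f$ in $\cP\cM$.

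First, $f\circ k\circ g = 0$. Second, let $h: Z \to X$ with $Z \in \cP\cM$ satisfy $f\circ h = 0$. By the universal property of the kernel in $\cM$, $h$ factors as $h = k\circ h'$ with $h': Z \to \ker(f)$. Since $g$ is a right $\cP\cM$-approximation and $Z \in \cP\cM$, there is $h'': Z \to E$ with $g\circ h'' = h'$. Hence $h = (k\circ g)\circ h''$, which is the required factorization. This factorization need not be unique, which is why we only obtain a weak kernel.

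Dually, for a weak cokernel, take $c: Y \to \coker(f)$ in $\cM$ and compose with the left $\cP\cM$-approximation $\iota\lact \coker(f)$. I expect no real obstacle here. The only point to check is that the approximation objects lie in $\cP\cM$, and this is immediate because $Q$ and $\leftdual{Q}$ are projective in $\cC$.
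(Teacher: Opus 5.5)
Your proposal is correct and follows essentially the same route as the paper: both deduce the statement from \cref{lem:functoriallyfinite}, taking a right $\cP\cM$-approximation of the kernel computed in $\cM$ as a weak kernel and, dually, a left $\cP\cM$-approximation of the cokernel as a weak cokernel. You spell out the factorization argument that the paper summarises as a ``general fact''.
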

\begin{proof}
    By \cref{lem:functoriallyfinite}, $\cP\cM$ is functorially finite, so that every object $M \in \cM$ has both right and left $\cP\cM$-approximation.
    The lemma then follows from the general fact that a right functorially finite subcategory of an abelian category contains weak kernels (namely right approximations of the actual kernels taken in $\cM$), and dually, that a left functorially finite subcategory contains weak cokernels.
    \end{proof}
    %We spell this out for the case of weak kernels, where the existence of weak cokernels follows from a dual argument.
    %Let $f: M \to N$ be a morphism between $M,N \in \cP\cM$ and take the kernel $K \coloneqq \ker(f) \xrightarrow{\iota} M$ in $\cM$.
    %We claim that the right $\cP\cM$-approximation $\pi_K: \tilde{K} \to K$ is then a weak kernel of $f$ in $\cP\cM$.
    %Indeed, let $X \in \cP\cM$ and $g: X \to M$ such that $fg=0$. 
    %Now $fg=0$ implies that $g$ factors through $\iota$ through some $g': X \to K$, so that $\iota \circ g' = g$.
    %Since $\pi_K:\tilde{K} \to K$ is a right $\cP\cM$-approximation, there exists $h: X \to \tilde{K}$ such that $\pi_K \circ h = g'$, which shows that $\tilde{K}$ is a weak kernel of $f$ as desired.
    %The fact that $\Fun^{\fp}(\cP\cM,\vec)$ is abelian follows from the general fact that $\Fun^{\fp}(\cA,\vec)$ for a ($\Bbbk$-linear) additive category $\cA$ is an abelian category if and only if $\cA$ contains weak kernels; see e.g.\ \cite[Lemma 2.1.6]{Krause_HomThRep}.  It follows immediately from the definition that $\Fun^{\fp}(\cP\cM,\vec)$ is Hom-finite and has enough projectives.

\begin{lemma}\label{lem:PPM}
    Suppose further that $\cM$ is a finite $\cC$-module category. For any $M\in \cM$ and $P\in\proj\cC$, we can write $P\lact M=M_1\oplus M_2$ where $M_1$ is projective and $M_2$ has no projective summands. Then, for every $Q\in\proj\cC$, the object $Q\lact M_2$ has no projective summands.
\end{lemma}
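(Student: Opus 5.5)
The plan is to argue by contradiction and to move a projective summand of $Q\lact M_2$ back onto $M_2$, using the adjunctions $\leftdual{X}\lact - \dashv X\lact - \dashv \rightdual{X}\lact -$ together with the splitting argument from the proof of \cref{lem:functoriallyfinite}.

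\emph{Step 1: a general splitting observation.} The proof of \cref{lem:functoriallyfinite} used only two facts: $Q$ is projective, and $\pi\colon Q\to\unit$ is an epimorphism. It did not use that $\pi$ is a projective cover. So for every epimorphism $e\colon Y\twoheadrightarrow\unit$ with $Y\in\proj(\cC)$, the map $e\lact K$ is a right $\cP\cM$-approximation of $K$. Dually, for every monomorphism $\unit\hookrightarrow Y'$ with $Y'$ projective, $K\to Y'\lact K$ is a left $\cP\cM$-approximation. Now take $K=M_2$, which lies in $\cP\cM$ as a summand of $P\lact M$. Then the identity of $M_2$ lifts along such an approximation, so these maps split. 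The instance I need is the evaluation $\mathrm{ev}\colon\leftdual{Q}\otimes Q\twoheadrightarrow\unit$, which is an epimorphism onto the simple unit with projective source. It shows that $M_2$ is a direct summand of $\leftdual{Q}\lact(Q\lact M_2)$, with section $\sigma$ and retraction $\mathrm{ev}\lact M_2$.

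\emph{Step 2: transport a projective summand.} Suppose $Q\lact M_2\cong R\oplus N$ with $R\neq0$ projective. Let $j\colon R\to Q\lact M_2$ and $p\colon Q\lact M_2\to R$ be the inclusion and projection, so $pj=1_R$. Under the adjunction $\leftdual{Q}\lact-\dashv Q\lact-$:
\begin{itemize}
\item $j$ corresponds to $\tilde j=(\mathrm{ev}\lact M_2)\circ(\leftdual{Q}\lact j)\colon \leftdual{Q}\lact R\to M_2$;
\item $p$ corresponds to a map $\tilde p\colon M_2\to\rightdual{Q}\lact R$ under $Q\lact-\dashv\rightdual{Q}\lact-$.
\end{itemize}
Both $\leftdual{Q}\lact R$ and $\rightdual{Q}\lact R$ are projective, because the action functors preserve projectives. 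Since $M_2$ has no projective summands, the composite $\tilde p\,\tilde j$ lies in the radical. More precisely, for every projective $T$ and all maps $a\colon T\to M_2$ and $b\colon M_2\to T$, the endomorphism $ba$ of $T$ lies in $\rad(T,T)$. This holds because $M_2$ is a finite-length object with no projective summand, so no composite $T\to M_2\to T_0$ onto an indecomposable summand $T_0$ of $T$ can be an isomorphism.

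\emph{Step 3: contradiction.} I would rewrite $1_R=pj$ with string-diagram calculus, in the same style as the picture in \cref{lem:functoriallyfinite}. Applying $Q\lact-$ to $\tilde p\,\tilde j$ and composing with the unit and counit of the duality recovers $pj$ up to inserting the zig-zag. The aim is to express $1_R$ as a composite
\[
R\to Q\lact(\leftdual{Q}\lact R)\xrightarrow{Q\lact(\tilde p\tilde j)}Q\lact(\rightdual{Q}\lact R)\to R,
\]
or a sum of such composites, in which the middle factor is $Q\lact$ applied to a radical morphism between projectives. The key lemma to prove here is that $Q\lact-$ sends radical morphisms between objects of $\cM$ to radical morphisms. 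I expect this to follow from the faithfulness of $X\lact-$ for $X\neq0$ together with the reflection of isomorphisms noted after \cref{defn:modulecat}. Granting it, $1_R$ would lie in the radical, forcing $R=0$, which is a contradiction.

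\emph{Main obstacle.} I expect Step 3 to be the hardest part, for two reasons. First, the identity of $R$ must genuinely factor through $\tilde p\,\tilde j$; if the zig-zag does not close up directly, I would use the section $\sigma$ from Step 1 to replace $M_2$ by $\leftdual{Q}\lact(Q\lact M_2)$ inside the factorisation. Second, the claim that $Q\lact-$ preserves radical morphisms must be verified. If that claim fails in general, the fallback is to use the same argument to show that an idempotent endomorphism $e\ne0$ of $M_2$ factors through a projective, which again contradicts the hypothesis that $M_2$ has no projective summands.
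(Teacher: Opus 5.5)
Your Steps 1 and 2 are fine. One small correction in Step 2: $\tilde p\,\tilde j$ runs between two different projectives, not one, but it is still radical, since an isomorphism component between indecomposable summands would exhibit a projective summand of $M_2$.

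\textbf{The gap is in Step 3.} First, the factorisation you worry about is automatic. Let $\eta_R$ be the unit of $\leftdual{Q}\lact-\dashv Q\lact-$ and $\varepsilon_R$ the counit of $Q\lact-\dashv\rightdual{Q}\lact-$. Then $1_R=pj=\varepsilon_R\circ\bigl(Q\lact(\tilde p\,\tilde j)\bigr)\circ\eta_R$ simply by the definition of adjunct maps. Because this is just a restatement of $pj=1_R$, all the content of your argument sits in the claim that $Q\lact-$ sends radical morphisms to radical morphisms. That claim is false, even for morphisms between projectives.

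For a counterexample, take $\cM=\cC$ with $\cC$ non-semisimple, so that $\unit$ is not projective. In the notation of \cref{lem:functoriallyfinite}, consider $\iota\pi\colon Q\twoheadrightarrow\unit\hookrightarrow\leftdual{Q}$. It is a non-isomorphism between indecomposable projectives, hence radical. However, $Q\otimes\pi$ is an epimorphism onto the projective $Q$, so it splits. Likewise $Q\otimes\iota$ is a monomorphism out of the injective $Q$, so it splits. Hence $r\circ\bigl(Q\otimes(\iota\pi)\bigr)\circ s=1_Q$ for suitable $r,s$, and $Q\otimes(\iota\pi)$ is not radical.

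This splitting effect of acting by projectives is exactly what the proof of \cref{lem:functoriallyfinite} exploits with $\rightdual{Q}\otimes\pi$. Faithfulness and reflection of isomorphisms point in the wrong direction and cannot give your claim. Moreover, in your situation, radicality of $Q\lact(\tilde p\,\tilde j)$ (a map between projectives that factors through $Q\lact M_2$) is essentially equivalent to the lemma itself, so the argument is circular. The fallback has no mechanism either: the projective summand of $\leftdual{Q}\lact(Q\lact M_2)$ need not be compatible with the summand $M_2$ split off in Step 1, so nothing produces a nonzero idempotent of $M_2$ that factors through a projective.

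\textbf{What is missing is a global input.} The lemma has real content: the paper remarks that it recovers the existence of proper $\proj(\cC)$-stable ideals for non-exact $\cM$. Formal adjunction manipulations around a single object $M_2$ cannot see this. The paper instead argues by counting with Frobenius--Perron dimensions, in the following steps.
\begin{enumerate}
\item Normalise $\FPdim$ on $\proj(\cM)$ so that $\FPdim(X\lact T)=\FPdim(X)\FPdim(T)$.
\item Let $A_i$ be the Frobenius--Perron dimension of the projective part of $P_i\lact M$.
\item From $P_i\lact(P_j\lact M)\cong(P_i\otimes P_j)\lact M$, read off $\FPdim(P_i)A_j\le\sum_l c_{ij}^lA_l$. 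Equality holds exactly when $P_i$ applied to the non-projective part of $P_j\lact M$ has no projective summand.
\item Sum these inequalities with the positive weights $d_j=\FPdim(L_j)$. The identity $\sum_j c_{ij}^ld_j=d_l\FPdim(P_i)$ turns the sum into an equality, so each individual inequality is an equality.
\end{enumerate}
Some positivity argument of this kind, or another global invariant, is what your Step 3 would have to supply.
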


\begin{proof}
    We consider the Frobenius--Perron dimension $\FPdim$ from \cite[\S 3 and \S 4]{EGNO}, which gives a ring homomorphism
    \[\FPdim:K_0[\cC]\to\mR.\]
    The Frobenius-Perron dimension of an object equals that of its duals.
    Moreover, if we label the simple objects in $\cC$ as $L_i,i\in I$, with projective cover $P_i$, then 
    \[d_i:=\FPdim(L_i)\;>\;0.\]
    If we denote the multiplicity of $P_l$ in $P_i\otimes P_j$ by $c_{ij}^l$, then one verifies quickly that
    \begin{equation}\label{eq:standid}\sum_{j\in I} c_{ij}^ld_j\;=\;d_l\FPdim P_i.\end{equation}

We label the simple objects in $\cM$ by $V_j,j\in J$, and their projective cover by $Q_j$.
    Now, using \cite[Proposition~3.4.4]{EGNO}, there are $\mu_j\in\mR_{>0}$ such that, for any $X\in\cC$ in $K_0[\cM]$
    \[\sum_{j\in J}\mu_j[X\lact V_j]\;=\;\FPdim(X)\sum_{l\in J}\mu_l[V_l].\]
    We can rewrite this equation (using $\FPdim X=\FPdim X^\ast$) as
    \[\sum_j\mu_j(X\lact Q_l:Q_j)\;=\;\mu_l \FPdim X,\]
    where $(Q:Q_j)$ stands for the multiplicity of $Q_j$ as a direct summand in a projective $Q$. For a general $Q=\oplus Q_s^{\oplus b_s}\in\proj\cM$, we can thus define
    \[\FPdim(Q):=\sum_s b_s\mu_s,\]
    and the above implies
    \[\FPdim(X\lact Q)\;=\; \FPdim(X)\cdot\FPdim(Q)\]
    for all $X\in\cC$ and $Q\in\proj\cM$.

    Now, we fix $M\in\cM$ and for every $i\in I$, we define $A_i$ to be $\FPdim$ of the projective part of $P_i\lact M$. Then by acting with $P_i\otimes P_j$ on $M$ we find
    \[A_j\FPdim(P_i)\;\le\; \sum_l c_{ij}^lA_l,\]
    and the statement in the lemma is precisely the claim that this is actually an equality. We can establish this, by adding up the inequalities for every $j\in I$, weighted with $d_j>0$, which gives
    \[\FPdim P_i\sum_j d_j A_j\le \sum_j d_j\sum_l c_{ij}^l A_l=\sum_ld_lA_l\FPdim P_i ,\]
    where the inequality is the weighted sum of inequalities from above and the equality comes from \eqref{eq:standid}. Consequently, the summed inequality (and hence every individual inequality) was indeed an equality.
\end{proof}

The following is an immediate corollary; see \cref{defn:Cproj} for the definition of $\cC$-projective.
\begin{corollary}
    Let $\cM$ be a finite $\cC$-module category. The only $\cC$-projective objects in $\cP\cM$ are the projective objects, i.e.\ $\Cproj{}(\cM) \cap \cP\cM = \proj(\cM)$.
\end{corollary}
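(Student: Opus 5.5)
The plan is to use the definition of $\cC$-projective (\cref{defn:Cproj}): an object $N$ with $P\lact N\in\proj(\cM)$ for all $P\in\proj(\cC)$. The statement then reduces to \cref{lem:PPM}, after checking that this property passes to direct summands.

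The inclusion $\proj(\cM)\subseteq \Cproj{}(\cM)\cap\cP\cM$ is immediate. We have $\proj(\cM)\subseteq\cP\cM$ by \cref{rmk:PMforexact}. Moreover, each $P\lact-$ preserves projectives, since it has an exact right adjoint $\rightdual{P}\lact-$.

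For the reverse inclusion, let $N\in\Cproj{}(\cM)\cap\cP\cM$ be indecomposable; this suffices, because both subcategories are closed under summands and $\cM$ is Krull--Schmidt. Since $N\in\cP\cM$, it is a summand of some $P\lact M$ with $P\in\proj(\cC)$ and $M\in\cM$. Write $P\lact M=M_1\oplus M_2$ with $M_1$ projective and $M_2$ having no projective summands, as in \cref{lem:PPM}.

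If $N$ is projective we are done. Otherwise, by Krull--Schmidt, $N$ is a summand of $M_2$. Pick any nonzero $Q\in\proj(\cC)$, for instance the projective cover of $\unit$. Since $N$ is $\cC$-projective, $Q\lact N$ is projective. It is nonzero, because $Q\lact-$ reflects isomorphisms for $Q\neq0$ and so kills no nonzero object. Hence $Q\lact N$ is a nonzero projective summand of $Q\lact M_2$. This contradicts \cref{lem:PPM}, so every indecomposable in the intersection is projective.

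The only subtle point is the nonvanishing of $Q\lact N$. It follows from the reflection-of-isomorphisms property, since $Q\lact N\cong Q\lact 0$ would force $N\cong0$. Everything else is bookkeeping with Krull--Schmidt in the finite category $\cM$.
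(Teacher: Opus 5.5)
Your proposal is correct and follows the same route as the paper, which records this statement as an immediate corollary of \cref{lem:PPM} without writing out a proof. Your argument is exactly the unpacking of that deduction: reduce to an indecomposable $N$, use Krull--Schmidt to place a non-projective $N$ inside the non-projective part $M_2$, and note that $Q\lact N$ is non-zero, which contradicts \cref{lem:PPM}.
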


\begin{remark}
    \begin{enumerate}
        \item The proof of Lemma~\ref{lem:PPM} can be simplified by realising $\cM$ as the category of modules (in $\cC$) over an algebra $A\in\cC$, in which case we can simply use the Frobenius--Perron dimension of the underlying object in $\cC$ for objects in $\cM$.
        \item The proof of Lemma~\ref{lem:PPM} is heavily inspired by ideas from \cite{MattiIMRN}. In fact, it can be used to recover the main application of \cite{MattiIMRN} in \cite{CSZ}. Indeed, the lemma implies that if $\cM$ is not exact, then there is a non-zero $M\in\cM$ such that $P\lact M$ does not contain any non-zero projective summands, for any $P\in\proj(\cC)$. By taking a projective presentation of $M$ we thus obtain a morphism in $\proj(\cM)$ that generates a proper $\proj(\cC)$-stable ideal in $\proj(\cM)$.
    \end{enumerate}
\end{remark}

\section{Internal homological algebra}\label{sec:homological-alg}
Throughout this section, we fix $\cC$ to be a finite tensor category and $\cM$ to be a  $\cC$-module category -- all results in this section will be stated and proven in the setting of left $\cC$-module categories, but the same results (and proofs) hold for right $\cC$-module categories.

\subsection{\texorpdfstring{$\cC$}{C}-projective objects and resolving subcategories}
We recall the following notion from the theory of tensor categories:
\begin{definition}\label{defn:Cproj}
    Let $\cM$ be a  $\cC$-module category. 
    We say that $E \in \cM$ is a \emph{$\cC$-projective} if $P\lact E \in \proj(\cM)$ is projective for some non-zero projective $P \in \proj(\cC)$.
    We define $\cC$-injectives in a similar fashion.
    The full subcategory of $\cC$-projectives (resp.\ $\cC$-injectives) is denoted by $\Cproj{}(\cM)$ (resp.\ $\Cinj{}(\cM))$.
\end{definition}
Note that $\Cproj{}(\cM)\supseteq \proj(\cM)$ and $\Cinj{}(\cM)\supseteq \inj(\cM)$ since $X\lact -$ preserves projectives and injectives.
\begin{remark}
    The notion of $\cC$-projective here differs from the similarly named notion of $\cB$-projective with respect to an exact subcategory $\cB \subseteq \cM$ defined in \cite{FDatlas}.
\end{remark}

The following well-known lemma provides another equivalent condition of being $\cC$-projective.
The analogous statement for $\cC$-injectives can be obtained through a dual argument. 
\begin{lemma}\label{lem:Cproj}
    The following are equivalent on $E\in\cM$:
    \begin{enumerate}
        \item \label{item:Cproj1} $E$ is $\cC$-projective.
        \item \label{item:Cprojforall1} $Q\lact E$ is projective for all $Q \in \proj(\cC)$.
    \end{enumerate}
\end{lemma}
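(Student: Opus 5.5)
The implication (ii)$\Rightarrow$(i) is immediate: since $\cC$ is finite and nonzero, $\proj(\cC)$ contains a non-zero object, for instance the projective cover $Q$ of $\unit$. So the content lies in (i)$\Rightarrow$(ii). Assume $P\lact E$ is projective for some non-zero $P\in\proj(\cC)$, and let $Q\in\proj(\cC)$ be arbitrary. The strategy is to exhibit $Q$ as a direct summand of an object of the form $Y\otimes P$ with $Y\in\cC$. Then $Q\lact E$ is a direct summand of $(Y\otimes P)\lact E\cong Y\lact(P\lact E)$, and this is projective because $Y\lact -$ preserves projectives (it has the exact right adjoint $\rightdual{Y}\lact-$).

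To realise $Q$ as such a summand, use the duality morphisms of $P$. Since $P\neq 0$, the coevaluation $\unit\to P\otimes\leftdual{P}$ (or the appropriate variant among $\unit \to \rightdual{P}\otimes P$ and $\unit\to P\otimes \leftdual P$; I will choose the one that places $P$ on the right) is a monomorphism, because $\unit$ is simple and the morphism is non-zero by the zigzag identities. Concretely, take the unit $\cup:\unit\hookrightarrow \rightdual{P}\otimes P$, which is monic, exactly as in the discussion of $X\lact-$ reflecting isomorphisms. Tensoring on the left with $Q$, which is exact since $\cC$ is rigid, gives a monomorphism
\[
Q\cong Q\otimes\unit\hookrightarrow Q\otimes\rightdual{P}\otimes P .
\]
Since $Q$ is projective, it is also injective because $\proj(\cC)=\inj(\cC)$, so this monomorphism splits. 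Hence $Q$ is a direct summand of $Y\otimes P$ with $Y\coloneqq Q\otimes\rightdual{P}$.

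Combining the two steps, $Q\lact E$ is a summand of $Y\lact(P\lact E)$. The module associativity isomorphism and additivity of $-\lact E$ in the first variable then show that $Q\lact E$ is a direct summand of a projective, hence projective.

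The only point needing care is that the unit $\cup$ is monic. It is non-zero, since otherwise the zigzag identity would force $\id_P=0$, contradicting $P\neq0$. A non-zero morphism out of a simple object is monic. Beyond this check, the argument uses only facts already recorded in the paper.
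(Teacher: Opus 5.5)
Your proof is correct and is essentially the paper's argument, in mirror image. The paper tensors the epic counit $\leftdual{P}\otimes P\twoheadrightarrow\unit$ with $Q$ and splits it using that $Q$ is projective; you tensor the monic unit $\unit\hookrightarrow\rightdual{P}\otimes P$ with $Q$ and split it using that $Q$ is injective. In both versions $Q\lact E$ becomes a direct summand of $Y\lact(P\lact E)$, which is projective.
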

\begin{proof}
    The implication $\ref{item:Cprojforall1} \implies \ref{item:Cproj1}$ is immediate, so we are left to show
    $\ref{item:Cproj1} \implies \ref{item:Cprojforall1}$.
    Let $E \in \Cproj{}(\cM)$.
    Suppose $P\neq 0 \in \proj(\cC)$ is such that $P\lact E \in \proj(\cM)$, and let $Q \in \proj(\cC)$ be arbitrary.
    Since $P$ has a left dual, we get a non-zero counit map $\eta: \leftdual{P} \otimes P \twoheadrightarrow \unit$, which has to be an epimorphism since $\unit$ is simple.
    It follows that $Q \otimes \eta:Q\otimes \leftdual{P}\otimes P \to Q \otimes \unit \cong Q$ is an epimorphism, which has to split since $Q$ is projective.
    It follows that $(Q \otimes \eta)\lact E : (Q\otimes \leftdual{P}\otimes P)\lact E \to Q\lact E$ is a split epimorphism, hence $Q\lact E$ is a summand of $(Q\otimes \leftdual{P}\otimes P)\lact E \cong (Q\otimes \leftdual{P})\lact (P\lact E)$, which is projective (recall that $X\lact - $ sends projectives to projectives for any $X \in \cC$).
    This shows that $Q\lact E$ is projective, as required.
\end{proof}

\begin{lemma}\label{lem:CprojclosedunderC}
    Let $X \in \cC$ and $E \in \Cproj{}(\cM)$.
    Then $X\lact E \in \Cproj{}(\cM)$.
    In other words, $\Cproj{}(\cM)$ is closed under the action of $\cC$.
\end{lemma}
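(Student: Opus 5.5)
We verify criterion \ref{item:Cprojforall1} of \cref{lem:Cproj} for $X\lact E$. Fix an arbitrary $Q\in\proj(\cC)$. By the associativity constraint of the module category,
\[
Q\lact (X\lact E)\;\cong\; (Q\otimes X)\lact E .
\]
As recalled after \cref{defn:FTC}, $Q\otimes X$ is projective in $\cC$ whenever one tensor factor is projective, so $Q':=Q\otimes X\in\proj(\cC)$.

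Since $E$ is $\cC$-projective, \cref{lem:Cproj} gives that $Q'\lact E$ is projective for every $Q'\in\proj(\cC)$, and in particular for $Q'=Q\otimes X$. Hence $Q\lact(X\lact E)$ is projective for all $Q\in\proj(\cC)$. Applying \cref{lem:Cproj} once more, now in the direction \ref{item:Cprojforall1}$\Rightarrow$\ref{item:Cproj1}, we conclude $X\lact E\in\Cproj{}(\cM)$.

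The only point needing care is that \cref{defn:Cproj} demands $P\lact E$ be projective for some \emph{non-zero} $P$. This is why we prove the ``for all $Q$'' version and then invoke \cref{lem:Cproj}, rather than working with a single $P$. Working with a single $P$ would fail, because $P\otimes X$ could be zero (for instance when $X=0$). The case $X=0$ is in any case trivial, since $0$ is projective. There is no real obstacle: the argument is just associativity combined with the fact that $\proj(\cC)$ is a tensor ideal.
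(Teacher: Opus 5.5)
Your proof is correct and is essentially the paper's argument: the paper also uses the associativity isomorphism $P\lact(X\lact E)\cong(P\otimes X)\lact E$, the fact that $P\otimes X$ is projective, and \cref{lem:Cproj} applied to $E$. The one difference is that the paper checks a single non-zero $P$ and concludes directly from \cref{defn:Cproj}. That works fine, because the non-zero requirement is on $P$, and \cref{lem:Cproj} covers $P\otimes X$ even when it is zero. So your closing remark that the single-$P$ version would fail is inaccurate, although it does not affect your proof.
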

\begin{proof}
    It suffices to show that $P\lact (X\lact E) \cong (P \otimes X) \lact E \in \proj(\cM)$ for some $P\neq 0 \in \proj(\cC)$.
    But since $P\otimes X$ is a projective in $\cC$ and $E$ is a $\cC$-projective by assumption, this follows from \cref{lem:Cproj}.
\end{proof}

\begin{proposition}\label{prop:Cprojresolving}
    $\Cproj{}(\cM)$ is a projectively resolving subcategory (\cref{defn:resolvingsubcat}) of $\cM$.
    Moreover, $\proj(\cM)$ is a cogenerator of $\Cproj{}(\cM)$, i.e.\ for all $E \in \Cproj{}(\cM)$, there exists an exact sequence $0 \to E \to G \to E' \to 0$ with $G \in \proj(\cM)$ and $E' \in \Cproj{}(\cM)$.
\end{proposition}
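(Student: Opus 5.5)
We verify the three closure properties of \cref{defn:resolvingsubcat} and the containment of projectives directly from \cref{lem:Cproj}. We repeatedly use that for any $P\in\proj(\cC)$ the functor $P\lact-$ is exact and sends $\proj(\cM)$ to $\proj(\cM)$. Containment $\proj(\cM)\subseteq\Cproj{}(\cM)$ was already noted after \cref{defn:Cproj}.

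\textbf{Closure properties.} For extensions, take $0\to E_1\to E\to E_2\to 0$ with $E_1,E_2\in\Cproj{}(\cM)$. Applying $P\lact-$ gives a short exact sequence $0\to P\lact E_1\to P\lact E\to P\lact E_2\to 0$ whose outer terms are projective. It therefore splits, and $P\lact E\cong (P\lact E_1)\oplus(P\lact E_2)$ is projective. For kernels of epimorphisms, take $0\to K\to E\to E'\to 0$ with $E,E'\in\Cproj{}(\cM)$. Applying $P\lact -$ yields an epimorphism $P\lact E\twoheadrightarrow P\lact E'$ onto a projective, which splits. Hence $P\lact K$ is a direct summand of the projective $P\lact E$, and so it is projective. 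For summands, if $E=E_1\oplus E_2$ then $P\lact E_1$ is a summand of the projective $P\lact E$, since $P\lact -$ is additive.

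\textbf{Cogenerator statement.} This is the only step needing an idea; I would use the injective envelope trick of \cref{lem:functoriallyfinite}. Let $\iota:\unit\hookrightarrow \leftdual{Q}$ be the injective envelope of $\unit$ in $\cC$; here $\leftdual{Q}$ is projective because $\proj(\cC)=\inj(\cC)$. For $E\in\Cproj{}(\cM)$ consider
\[
0\to E\xrightarrow{\iota\lact E}\leftdual{Q}\lact E\to C\lact E\to 0,
\]
where $C=\coker(\iota)$. This sequence is exact because $-\lact E$ is exact in the first variable. The middle term $G:=\leftdual{Q}\lact E$ is projective by \cref{lem:Cproj}, since $\leftdual{Q}\in\proj(\cC)$. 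The cokernel $E'=C\lact E$ is $\cC$-projective by \cref{lem:CprojclosedunderC}.

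If $\cC$ is semisimple, then $\unit$ is itself projective. In that case $E=\unit\lact E$ is projective, and the construction above still works, with $C$ possibly $0$. So no case distinction is actually needed. I expect no real obstacle here; the main point is simply to choose an injective object of $\cC$ containing $\unit$ and to invoke \cref{lem:CprojclosedunderC} for the cokernel.
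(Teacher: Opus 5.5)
Your proof is correct and follows the paper's argument: apply the exact functor $P\lact-$ for the closure properties, then act on $E$ with a short exact sequence in $\cC$ that embeds $\unit$ into a projective object, using \cref{lem:CprojclosedunderC} for the cokernel. The only cosmetic difference is that you embed $\unit$ via its injective envelope $\unit\hookrightarrow\leftdual{Q}$. The paper instead uses the coevaluation $\unit\to P\otimes\leftdual{P}$ for an arbitrary nonzero $P\in\proj(\cC)$, which is monic because $\unit$ is simple; either choice works equally well.
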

\begin{proof}
    Let $0 \to M' \to M \to M'' \to 0$ be an exact sequence  in $\cM$.
    Properties \ref{item:extclosed} to \ref{item:summandclosed} in \cref{defn:resolvingsubcat} all follow immediately from applying the exact endofunctor $P\lact -$ to the exact sequence for some $P \neq 0 \in \proj(\cC)$.
    The fact that $\proj(\cM) \subseteq \Cproj{}(\cM)$ follows immediately from the fact that $X \lact -$ has both left and right adjoints for any $X \in \cC$.
    
    Finally, to see that $\proj(\cM)$ is a cogenerator for $\Cproj{}(\cM)$, take the unit morphism $\mu:\unit \to P\otimes \leftdual{P}$ that defines the left dual $\leftdual{P}$ of $P\neq 0 \in \proj(\cC)$ and consider the short exact sequence $(\mu$ is a monomorphism since it is non-zero and $\unit$ is simple):
    \[
    0 \to \unit \xrightarrow{\mu} P\otimes \leftdual{P} \to \coker(\mu) \to 0.
    \]
    Given any $E \in \Cproj{}(\cM)$, we obtain the desired cogeneration property by applying the exact functor $-\lact E$ to this exact sequence (noting that $P\otimes \leftdual{P} \in \proj(\cC)$, and $\coker(\mu) \lact E \in \Cproj{}(\cM)$ by \cref{lem:CprojclosedunderC}).
\end{proof}

Specialising to $\cX = \Cproj{}(\cM)$ from \cref{defn:resolvingsubcat}, we have the following:
\begin{definition}\label{defn:Cprojdim}
    Let $\cM$ be a $\cC$-module category and let $M$ be an object in $\cM$.
    A $\Cproj{}(\cM)$-resolution of $M$ is called a \emph{$\cC$-projective resolution of $M$}.
    The \emph{$\cC$-projective dimension of $M$} is defined as 
    \[
    \ipd(M) \coloneqq \Xdim{\Cproj{}(\cM)}(M).
    \]
    For a (full) subcategory $\cB \subseteq \cM$, we define the \emph{$\cC$-projective dimension of $\cB$} as
    \[
    \ipd(\cB) \coloneqq \Xdim{\Cproj{}(\cM)}(\cB),
    \]
    and the \emph{finitistic $\cC$-projective dimension of $\cB$} as
    \[
    \fin{.}\ipd(\cB) \coloneqq \fin{.}\Xdim{\Cproj{}(\cM)}(\cB).
    \]
\end{definition}
Note that by definition, an object is a $\cC$-projective if and only if its $\cC$-projective dimension is zero. 
In particular, $\ipd(\cM) = 0$ if and only if every object in $\cM$ is a $\cC$-projective.

The following lemma shows that $\cC$-projectives are also ``Ext-orthogonal'' to objects with finite projective dimension:
\begin{lemma}\label{lem:orth}
    Consider $M,N\in\cM$, where $M$ is $\cC$-projective and $N$ has finite projective dimension. Then $\Ext^i_{\cM}(M,N)=0$ for $i>0$.
\end{lemma}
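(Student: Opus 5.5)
The plan is to induct on $\pd(N)$, reducing first to the case where $N$ is projective. The long exact sequence does the reduction. Suppose $\pd(N)=d\geq 1$ and take $0\to N'\to G\to N\to 0$ with $G$ projective and $\pd(N')=d-1$. For $i\ge 1$ we get an exact piece
\[
\Ext^i_\cM(M,G)\to\Ext^i_\cM(M,N)\to\Ext^{i+1}_\cM(M,N').
\]
The right-hand term vanishes by induction. So it suffices to show $\Ext^i_\cM(M,G)=0$ for all $i>0$ and all $G\in\proj(\cM)$.

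The key step is the projective case, and I expect it to be the main obstacle. Here I use the cogeneration property from \cref{prop:Cprojresolving}. Take $P\neq 0$ in $\proj(\cC)$ and the short exact sequence $0\to M\to P\otimes\leftdual{P}\lact M\to E'\to 0$. Its middle term is projective in $\cM$ by \cref{lem:Cproj}, and $E'$ is again $\cC$-projective. Iterating gives a coresolution $0\to M\to G^0\to G^1\to\cdots$ by projectives, all of whose cokernels are $\cC$-projective.

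I would rather prove the vanishing directly by adjunction. Write $\unit$ as a direct summand of something whose action on $M$ is projective; this is impossible on the nose, so I use a dimension shift instead. Set $Z=\coker(\mu)$ in $0\to\unit\to P\otimes\leftdual{P}\to Z\to 0$ and apply $-\lact M$, giving $0\to M\to P\otimes\leftdual P\lact M\to Z\lact M\to 0$. The long exact sequence for $\Hom(-,G)$ then yields $\Ext^i(M,G)\cong\Ext^{i+1}(Z\lact M,G)$ for $i\ge1$, since the middle term is projective. Iterating gives $\Ext^i(M,G)\cong\Ext^{i+k}(Z^{\otimes k}\lact M,G)$. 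This reduction alone does not terminate, so I need a different approach to conclude.

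The better route is the adjunction $\Ext^i_\cM(X\lact M,G)\cong\Ext^i_\cM(M,\rightdual{X}\lact G)$, which holds because $X\lact-$ is exact with exact adjoints. Apply it with $X=P$ and the projective $G$. The group $\Ext^i(P\lact M,G)$ vanishes because $P\lact M$ is projective, so $\Ext^i(M,\rightdual P\lact G)=0$. Now $G$ is a direct summand of $\rightdual P\lact(P'\lact G)$ for a suitable projective $P'$. Indeed, the unit $\unit\hookrightarrow \rightdual{P}\otimes P$ splits after tensoring with a projective, so $Q\lact G$ is a summand of $\rightdual P\otimes P\otimes Q\lact G$. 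It remains to realise $G$ itself as a summand of some $Q\lact G'$ with $Q\in\proj(\cC)$; this is precisely the statement $\proj(\cM)\subseteq\cP\cM$. It follows from \cref{lem:functoriallyfinite}: the epimorphism $Q\lact G\twoheadrightarrow G$ splits because $G$ is projective.

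Hence $G$ is a summand of $\rightdual P\lact G''$, where $G''=P\otimes Q\lact G$ (using $\rightdual{P}\otimes P\otimes Q$). Therefore
\[
\Ext^i(M,G)\hookrightarrow\Ext^i(M,\rightdual P\lact G'')\cong\Ext^i(P\lact M,G'')=0.
\]
This completes the projective case, and the induction finishes the proof.
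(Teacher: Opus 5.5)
Your final argument is correct and is essentially the paper's proof: you induct on $\pd(N)$ via the long exact sequence, and you handle the projective case using $\proj(\cM)\subseteq\cP\cM$ together with the adjunction $\Ext^i_\cM(X\lact M,G)\cong\Ext^i_\cM(M,\rightdual{X}\lact G)$. You can delete the abandoned detours (cogeneration, dimension shifting), and the unit-splitting step is also unnecessary: by \cref{lem:Cproj} you may take $P=\leftdual{Q}$ with $Q\twoheadrightarrow\unit$ the projective cover, so that $\rightdual{P}\cong Q$ and $\Ext^i_\cM(M,Q\lact G)\cong\Ext^i_\cM(\leftdual{Q}\lact M,G)=0$ directly.
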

\begin{proof}
    We can prove this by induction on $n=\pd(N)$. If $n=0$, then $N$ is a summand of an object $P\lact N'$ for $P\in\cC$ projective, and the claim follows by adjunction. Assume the claim holds for $n$ and consider the case $\pd(N)=n+1$, then with $K$ the kernel of an epimorphism $Q\twoheadrightarrow N$ for $Q \in \proj(\cM)$, we have an exact sequence
    \[\Ext^i(M,Q)\to \Ext^i(M,N)\to\Ext^{i+1}(M,K).\]
    The left and right term are zero by the base case and the induction hypothesis respectively, and hence so is the middle term.
\end{proof}
\begin{remark}
    The lemma above is also a consequence of the fact that all $\cC$-projectives are Gorenstein projectives, which will be shown later in \cref{prop:Cproj-Gproj}.
\end{remark}

\subsection{Computing \texorpdfstring{$\cC$}{C}-projective dimension and resolution}
As before, $\cM$ is again a $\cC$-module category.

Recall that the (usual) projective dimension of $M\in \cM$ is denoted by $\pd(M)$.
%In this setting, given an object $M$ with $\pd(M) = m$, any of its projective resolutions can be shortened to length $m$ by taking the kernel of the $(m-1)$-th differential, which has to be projective by Schanuel's lemma.
The following proposition relates projective resolutions with $\cC$-projective resolutions, and provides an analogue of Schanuel's lemma for $\cC$-projective resolutions.
\begin{proposition}\label{prop:Cprojdimprojdim}
    Let $M \in \cM$ and $P \neq 0 \in \proj(\cC)$.
    Then 
    \[
    \ipd(M) = \pd(P\lact M).
    \]
    Moreover, if $\ipd(M) < \infty$, any projective resolution of $M$ 
    \begin{equation}\label{eqn:projres}
    \cdots \to E_m \xra{d_{m}} E_{m-1} \xra{d_{m-1}} E_{m-2} \to \cdots \to E_1 \xra{d_1} E_0 \xra{d_0} M \xra{d_{-1}} 0,
    \end{equation}
    induces the following $\cC$-projective resolution of $M$
    \begin{equation}\label{eqn:projtoCprojrest}
    0 \to \ker(d_{n-1}) \to E_{n-1} \xra{d_{n-1}} E_{n-2} \to \cdots \to E_1 \xra{d_1} E_0 \xra{d_0} M \xra{d_{-1}} 0
    \end{equation}
    for any $n \geq \ipd(M)$.
    %In particular, $\ipd(\cM)=\pd(\cP\cM)$.
\end{proposition}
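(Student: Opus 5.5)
The plan is to compare $\cC$-projective resolutions of $M$ with projective resolutions of $P\lact M$ by applying the exact functor $P\lact -$. Two facts will be used throughout. First, $P\lact -$ is exact and sends $\cC$-projectives to projectives: if $E\in\Cproj{}(\cM)$ then $P\lact E\in\proj(\cM)$ by \cref{lem:Cproj}. Second, $N\in\cM$ is $\cC$-projective if and only if $P\lact N$ is projective, which is \cref{defn:Cproj} together with \cref{lem:Cproj}, using $P\neq 0$.

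\emph{Inequality $\pd(P\lact M)\le \ipd(M)$.} Take a $\cC$-projective resolution $0\to E_n\to\cdots\to E_0\to M\to 0$ of length $n=\ipd(M)$. Applying $P\lact -$ yields an exact sequence whose terms are all projective. This is a projective resolution of $P\lact M$ of length $n$, so $\pd(P\lact M)\le n$. The same argument gives the inequality when $\ipd(M)=\infty$, trivially.

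\emph{Reverse inequality and the second claim.} Let $d=\pd(P\lact M)$ and take any projective resolution \eqref{eqn:projres} of $M$. Its terms $E_i$ are projective, hence $\cC$-projective. Applying $P\lact -$ gives a projective resolution of $P\lact M$ with syzygies $P\lact \ker(d_{i})$, using exactness of $P\lact -$.

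By the standard Schanuel argument (dimension shifting), for any $n\ge d$ the $n$-th syzygy $P\lact\ker(d_{n-1})$ of a projective resolution of an object of projective dimension $d$ is projective. Therefore $\ker(d_{n-1})$ is $\cC$-projective by the second fact above. The truncated sequence \eqref{eqn:projtoCprojrest} is then a $\cC$-projective resolution of length $n$, so $\ipd(M)\le d$ and equality holds.

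Once equality is established, the condition $n\ge\ipd(M)$ is the same as $n\ge d$, which gives the stated form of the second claim. If $d=\infty$, then the first inequality already forces $\ipd(M)=\infty$.

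The only step requiring any care is the Schanuel/dimension-shifting fact that all sufficiently high syzygies of an object of finite projective dimension are projective, regardless of which resolution is used. This is classical: $\Ext^1(\Omega^n X, -)\cong\Ext^{n+1}(X,-)=0$ for $n\ge \pd(X)$. So no real obstacle is expected.
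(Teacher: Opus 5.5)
Your proposal is correct and follows essentially the same route as the paper: you obtain $\pd(P\lact M)\le\ipd(M)$ by applying the exact functor $P\lact -$ to a $\cC$-projective resolution. For the reverse inequality, you apply $P\lact -$ to an arbitrary projective resolution and use Schanuel's lemma to see that $P\lact\ker(d_{n-1})$ is projective for $n\ge\pd(P\lact M)$, hence $\ker(d_{n-1})$ is $\cC$-projective and the truncation is a $\cC$-projective resolution.
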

\begin{proof}
    The inequality $\pd(P\lact M) \leq \ipd(M)$ follows immediately from the fact that any $\cC$-projective resolution $E^\bullet \to M$ gives a projective resolution of $P\lact E^\bullet \to P\lact M$ ($P\lact -$ is an exact functor).
    Thus we are left to show the reverse inequality for $P\neq 0 \in \proj(\cC)$.

    If $\pd(P\lact M) = \infty$ then we have nothing left to show; so suppose $m \coloneqq \pd(P\lact M)< \infty$ instead.
    Given a projective resolution of $M$ as in \eqref{eqn:projres}, we apply $P\lact -$ and get (from Schanuel's lemma) the following projective resolution of $P\lact M$ for any $n \geq m$:
    \[
     0 \to \ker(P \lact d_{n-1}) \to P\lact E_{n-1} \xra{P\lact d_{n-1}}  \cdots \to P\lact E_1 \xra{P \lact d_1} P\lact E_0 \xra{P\lact d_0} P\lact M \xra{P\lact d_{-1}} 0.
    \]
    In particular, $\ker(P\lact d_{n-1})$ is projective, and by exactness of $P\lact -$, we get that $P \lact \ker(d_{n-1}) \cong \ker(P \lact d_{n-1})$ is projective, which shows that $\ker(d_{n-1})$ is $\cC$-projective.
    It follows that
    \[
    0 \to \ker(d_{n-1}) \to E_{n-1} \xra{d_{n-1}} \cdots \to E_1 \xra{d_1} E_0 \xra{d_0} M \xra{d_{-1}} 0
    \]
    is a $\cC$-projective resolution of $M$ for any $n \geq m$, giving us \eqref{eqn:projtoCprojrest} as required.
    Our desired inequality $\ipd(M) \leq m = \pd(P\lact M)$ also follows by picking $n=m$.
    %For the final equality $\ipd(\cM)=\pd(\cP\cM)$, the proven equality $\ipd(M) = \pd(P\lact M)$ for all $M \in \cM$ shows that $\ipd(\cM) \leq \pd(\cP\cM)$. On the other hand, any $M' \in \cP\cM$ is a summand of $P\lact M$ for some $P \neq 0 \in \proj(\cC)$ and some $M\in \cM$, hence $\pd(M') \leq \pd(P\lact M')=\ipd(M)$ as well, as required.
\end{proof}

\begin{remark}\label{rmk:pdPMindependent}
Note that \cref{prop:Cprojdimprojdim} implies $\pd(P\lact M) = \pd(P'\lact M)$ for any non-zero $P,P' \in \proj(\cC)$ (since $\ipd(M)$ is independent of $P$).
\end{remark}

\begin{corollary}\label{cor:pdCpdrelations}
    Let $L \neq 0 \in \cC$ and $P\neq 0 \in \proj(\cC)$.
    We have the following chain of inequalities 
    \[
    \pd(P\lact M) = \ipd(M) = \pd((P\otimes L) \lact M) \leq \pd(L\lact M) \leq \pd(M).
    \]
\end{corollary}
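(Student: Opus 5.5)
The chain has four parts, which I will establish in this order.

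First, the equality $\pd(P\lact M)=\ipd(M)$ is exactly \cref{prop:Cprojdimprojdim}. For the second equality, observe that $P\otimes L$ is projective in $\cC$, since $\proj(\cC)$ is a tensor ideal. It is also non-zero. To see this, let $\eta:\leftdual{L}\otimes L\twoheadrightarrow\unit$ be the evaluation, which is epic because $\unit$ is simple and $\eta\neq 0$. Then $P\otimes\eta$ exhibits $P$ as a quotient of $P\otimes\leftdual{L}\otimes L$, so $P\otimes L\neq 0$ (otherwise $P=0$). Applying \cref{prop:Cprojdimprojdim} to the non-zero projective $P\otimes L$ then gives $\pd((P\otimes L)\lact M)=\ipd(M)$. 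Alternatively one can invoke \cref{rmk:pdPMindependent}.

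Next, I show that for any $X\in\cC$ and $N\in\cM$ we have $\pd(X\lact N)\le \pd(N)$. If $\pd(N)=\infty$ there is nothing to prove. Otherwise, take a projective resolution of $N$ of length $\pd(N)$ and apply the exact functor $X\lact -$. This functor preserves projectives, because it has the exact right adjoint $\rightdual{X}\lact-$. The result is a projective resolution of $X\lact N$ of the same length.

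Now set $N=M$ and $X=L$ to get the last inequality $\pd(L\lact M)\le\pd(M)$. Then set $N=L\lact M$ and $X=P$. Using $(P\otimes L)\lact M\cong P\lact(L\lact M)$, this gives the middle inequality $\pd((P\otimes L)\lact M)\le\pd(L\lact M)$.

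No step is a real obstacle. The only point needing care is checking that $P\otimes L\neq 0$, so that \cref{prop:Cprojdimprojdim} applies; the evaluation-map argument above handles this.
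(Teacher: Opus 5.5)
Your route is the paper's: both equalities come from \cref{prop:Cprojdimprojdim} (equivalently \cref{rmk:pdPMindependent}) applied to the non-zero projectives $P$ and $P\otimes L$. Both inequalities come from applying the exact, projective-preserving functor $X\lact -$ to a projective resolution.

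The one step that fails as written is your check that $P\otimes L\neq 0$. The epimorphism $P\otimes\eta\colon P\otimes\leftdual{L}\otimes L\twoheadrightarrow P$ only shows that $(P\otimes\leftdual{L})\otimes L\neq 0$. The hypothesis $P\otimes L=0$ does not by itself give $P\otimes\leftdual{L}\otimes L=0$, because $\leftdual{L}$ sits between $P$ and $L$. So the parenthetical ``otherwise $P=0$'' does not follow.

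You need the dual on the other side of $L$. The coevaluation $\unit\hookrightarrow L\otimes\leftdual{L}$ is a monomorphism, being a non-zero map out of the simple unit. Tensoring with $P$ gives a monomorphism $P\hookrightarrow (P\otimes L)\otimes\leftdual{L}$. Alternatively, the right-dual evaluation $L\otimes\rightdual{L}\twoheadrightarrow\unit$ gives an epimorphism $(P\otimes L)\otimes\rightdual{L}\twoheadrightarrow P$. Either way, $P\otimes L=0$ would force $P=0$.

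The paper uses this non-vanishing silently. It is the standard fact that $X\otimes Y\neq 0$ for non-zero $X,Y$ in a tensor category, so the repair is routine.
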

\begin{proof}
    The first two equalities follow from \cref{prop:Cprojdimprojdim} (see also \cref{rmk:pdPMindependent}).
    The last two inequalities follow from the fact that any projective resolution $Q^\bullet \to M$ gives a projective resolution of $X\lact Q^\bullet \to X\lact M$ for any $X \in \cC$ ($X\lact -$ is an exact functor).
\end{proof}

\begin{remark}
    Note that the inequality $\pd(L\lact M) \leq \pd(M)$ can be strict even if $L \notin \proj(\cC)$ (which can only happen if $\pd(M)=\infty$; see \cref{prop:finite_pd=finite_ipd}). For instance we can consider a finite tensor category $\cC$, as the regular module category over itself, with non-projective objects $X,Y$ with $X \otimes Y$ projective. An example is given by $\cC=\cC_4$ in \cite[\S 5.4]{BE} where the tensor product of the non-projective simple objects $V_1,V_2$ is a simple projective.
\end{remark}

%%%%%%%%%%%%%%%%%%%%%%%%%%%
%
%%%%%%%%%%%%%%%%%%%%%%%%%%%
\iffalse
\begin{corollary}[Horseshoe lemma]
    Let $0 \to M' \to M \to M'' \to 0$ be a short exact sequence in $\cM$.
    Then $\ipd(M) \leq \max \{ \ipd(M'), \ipd(M'')\}$.
\end{corollary}
\begin{proof}
    This follows immediately from \cref{prop:Cprojdimprojdim} and the usual Horseshoe lemma by applying $P\lact -$ for $P \neq 0 \in \Cproj{}(\cM)$.
\end{proof}
\fi
%%%%%%%%%%%%%%%%%%%%%%%%%%%

\subsection{\texorpdfstring{$\cC$}{C}-projectives and Gorenstein projectives} \label{sec:Gorenstein-proj}
%Readers familiar with Gorenstein homological algebra will notice that many of our results in relation to $\cC$-projectives resemble that of Gorenstein projectives. We clarify their relation in this section.
In the following, a complex $(X_\bullet, d_\bullet)$ in $\proj(\cM)$ is said to be \emph{totally acyclic} if it is exact and $\Hom_\cM(X_\bullet, P)$ is exact for any projective $P \in \proj(\cM)$.
\begin{definition}
    An object $G \in \cM$ is a \emph{Gorenstein projective} if there exists a totally acyclic complex $(P_\bullet, d_\bullet)$ in $ \proj(\cM)$ such that $\ker(P_0 \to P_{-1}) \cong G$.
    The full subcategory of Gorenstein projectives is denoted by $\Gproj(\cM)$.
\end{definition}

Note that $\Gproj(\cM)$ is also a projectively resolving subcategory; see e.g.\ \cite{Holm_Gorenstein} or \cite{AB_CMapprox}.
By specialising $\cX = \Gproj(\cM)$ from \cref{defn:resolvingsubcat}, we have the following:
\begin{definition}
    Let $M$ be an object in $\cM$.
    A $\Gproj(\cM)$-resolution of $M$ is called a \emph{Gorenstein-projective resolution of $M$}.
    The \emph{Gorenstein-projective dimension of $M$} is defined as 
    \[
    \Gpd(M) \coloneqq \Xdim{\Gproj(\cM)}(M).
    \]
    We also define the \emph{Gorenstein-projective dimension of $\cM$} as
    \[
    \Gpd(\cM) = \sup\{ \Gpd(M) \mid M \in \cM\}.
    \]
    We say that $\cM$ is \emph{$d$-Iwanaga--Gorenstein} if $\Gpd(\cM) \leq d$; for a finite abelian category $\cM$, this is equivalent to having all projectives in $\cM$ have injective dimension $\leq d$ and all injectives in $\cM$ have projective dimension $\leq d$  (see e.g.\ \cite[Theorem 2.3.3]{XW-Chen_GorensteinHom}).
\end{definition}

\begin{proposition}\label{prop:Cproj-Gproj}
    Let $\cM$ be a $\cC$-module category and let $X \in \mathcal{M}$. The following are equivalent
    \begin{enumerate}
        \item \label{item:isCproj} $X$ is a $\mathcal{C}$-projective.
        \item \label{item:isGproj-ipd} $X$ is a Gorenstein projective and $\ipd(X)<\infty$. 
    \end{enumerate}
    In particular, $\Cproj{}(\cM) \subseteq \Gproj(\cM)$, so that $\Gpd(\cM) \leq \ipd(\cM)$.
\end{proposition}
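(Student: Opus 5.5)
The implication \ref{item:isCproj}$\Rightarrow$\ref{item:isGproj-ipd} is the substantial one; the converse is short. Throughout, fix a non-zero $P\in\proj(\cC)$.

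For \ref{item:isGproj-ipd}$\Rightarrow$\ref{item:isCproj}, let $X$ be Gorenstein projective with $n\coloneqq\ipd(X)<\infty$. By \cref{prop:Cprojdimprojdim}, $\pd(P\lact X)=n<\infty$. Applying the exact functor $P\lact-$ to a totally acyclic complex realising $X$ gives an exact complex of projectives. This complex is again totally acyclic, because
\[
\Hom_\cM(P\lact Q_\bullet,Q)\cong\Hom_\cM(Q_\bullet,\rightdual{P}\lact Q)
\]
by adjunction, and $\rightdual{P}\lact Q$ is projective. Hence $P\lact X$ is Gorenstein projective with finite projective dimension. The standard argument then shows $P\lact X$ is projective: take $0\to P\lact X\to G\to X'\to 0$ from the totally acyclic complex, with $G$ projective and $X'$ Gorenstein projective; then $\Ext^1(X',P\lact X)=0$ because $\Ext^{\geq1}(\text{Gproj},-)$ vanishes on objects of finite projective dimension (induction on $\pd$), so the sequence splits. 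Thus $X$ is $\cC$-projective.

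For \ref{item:isCproj}$\Rightarrow$\ref{item:isGproj-ipd}, note first that $\ipd(X)=0$ trivially. It remains to build a totally acyclic complex through $X$.

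\emph{Right half.} Iterate the cogeneration sequence $0\to E\to G\to E'\to 0$ from \cref{prop:Cprojresolving}. This produces an exact coresolution $0\to X\to G^0\to G^1\to\cdots$ with each $G^i\in\proj(\cM)$ and every cokernel $\cC$-projective.

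\emph{Left half.} Take an ordinary projective resolution of $X$. Every syzygy is $\cC$-projective, since $\Cproj{}(\cM)$ is resolving and contains $\proj(\cM)$.

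Splicing the two halves gives an exact complex of projectives all of whose cycle objects are $\cC$-projective. For total acyclicity it suffices that $\Ext^1_\cM(E,Q)=0$ for every $\cC$-projective $E$ and every projective $Q$. Each short exact piece $0\to Z\to Q_i\to Z'\to0$ of the complex has $Z'$ $\cC$-projective, so with this vanishing it stays exact under $\Hom_\cM(-,Q)$. The vanishing is exactly \cref{lem:orth} applied with $N=Q$.

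\textbf{Main obstacle.} This is the point where care is needed. \cref{lem:orth} is proved using adjunction, and its base case uses that $\cP\cM$ contains the projectives. I will verify the adjunction step directly. Write $Q$ as a summand of $(\leftdual{P}\otimes P)\lact Q$ via the split epi coming from the counit, as in \cref{lem:Cproj}. Then
\[
\Ext^1(E,(\leftdual{P}\otimes P)\lact Q)\cong\Ext^1(\rightdual{P}\lact E,\ P\lact Q)=0,
\]
because $\rightdual{P}\lact E$ is projective. The adjunction transports $\Ext$ since both functors are exact and preserve projectives. Given this, the final claims follow. $\Cproj{}(\cM)\subseteq\Gproj(\cM)$ is immediate. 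For $\Gpd(\cM)\le\ipd(\cM)$, note that a $\cC$-projective resolution is in particular a $\Gproj(\cM)$-resolution.
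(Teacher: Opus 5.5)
Your proof is correct. For (ii)$\Rightarrow$(i) you follow the paper's argument. You write out the details the paper delegates to Holm: total acyclicity of $P\lact Q_\bullet$ via the right adjoint $\rightdual{P}\lact-$, and the splitting argument for a Gorenstein projective of finite projective dimension.

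For (i)$\Rightarrow$(ii) you take a genuinely different route. The paper builds the complete resolution in one stroke. It takes a totally acyclic complex $P_\bullet$ in $\proj(\cC)$ through $\unit$ (a projective resolution spliced with a projective-injective coresolution of $\unit$) and acts on $X$ to get $P_\bullet\lact X$. Total acyclicity then comes from $\Hom_\cM(P_\bullet\lact X, Q\lact R)\cong\Hom_\cM((\leftdual{Q}\otimes P_\bullet)\lact X,R)$, together with the fact that $\leftdual{Q}\otimes-$ turns any exact complex in $\cC$ into a split exact one.

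You instead use a general principle: a projectively resolving subcategory that is cogenerated by $\proj(\cM)$ and satisfies $\Ext^1(-,\proj(\cM))=0$ lies in $\Gproj(\cM)$. You feed in \cref{prop:Cprojresolving} and \cref{lem:orth}. Since \cref{lem:orth} is proved independently (by adjunction and induction on projective dimension), there is no circularity; you simply reverse the logical direction of the remark following \cref{lem:orth}.

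The two approaches buy different things. The paper's construction is explicit and functorial in $X$: the complete resolution is literally induced from one for $\unit$ in $\cC$. Yours is more modular and applies verbatim to any resolving subcategory with those two properties. Both ultimately rest on the same duality/adjunction trick.

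One small slip appears in your ``main obstacle'' paragraph. With the paper's conventions ($X\lact-$ has left adjoint $\leftdual{X}\lact-$), the left adjoint of $\leftdual{P}\lact-$ is $\leftdual{(\leftdual{P})}\lact-$, not $\rightdual{P}\lact-$. This is harmless: $\leftdual{(\leftdual{P})}$ is again projective in $\cC$, so $\leftdual{(\leftdual{P})}\lact E$ is projective by \cref{lem:Cproj}, and the $\Ext^1$ still vanishes.
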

\begin{proof}
   We can prove $\ref{item:isCproj} \implies \ref{item:isGproj-ipd}$ as follows. 
   Let $(P_\bullet, d_\bullet)$ denote the totally acyclic complex with $\unit \cong \ker(P_0\xra{d_0} P_{-1})$ obtained by combining a projective resolution and projective(-injective) coresolution of $\unit$ (this is possible since $\cC$ is finite and hence has enough projectives and injectives).
   Given a $\cC$-projective $X$, we can apply the exact functor $-\lact X$ to the totally acyclic complex to get $(P_\bullet\lact X, d_\bullet \lact X)$.
   By definition, each $P_i\lact X \in \proj(\cM)$ and 
   \[
   X \cong \unit\lact X \cong \ker(P_0\lact X\xra{d_0 \lact X} P_{-1}\lact X) \cong \ker(P_0\xra{d_0} P_{-1})\lact X.
   \]
   Moreover, we can observe that $P_\bullet\lact X$ is totally acyclic. Indeed, any projective $R$ in $\cM$ is a direct summand of $Q\lact R$, where $Q\in\cC$ is the projective cover of $\unit$, so it suffices to observe that
   \[\Hom_{\cM}(P_\bullet\lact X,Q\lact R)\cong\Hom_{\cM}((Q^\ast\otimes P_\bullet)\lact X, R)\]
   is exact, since the action of $Q^\ast\otimes -$ on any exact complex in $\cC$ produces a split exact complex.
   
   For $\ref{item:isGproj-ipd} \implies \ref{item:isCproj}$, let $G$ be Gorenstein-projective and suppose that $\ipd(G)=d<\infty$.
   Then we get $\pd(P\lact G) = d$ by \cref{prop:Cprojdimprojdim} for some $P\neq 0 \in \proj(\cC)$.
   Now note that $P\lact G$ is again Gorenstein-projective -- just apply the exact functor $P\lact -$ to the totally acyclic complex associated to $G$.
   Since $P\lact G$ is a Gorenstein projective with finite projective dimension, it has to be projective; this follows from a simple induction argument combined with the fact that $\Ext_{\cM}^1(G,R)=0$ for any $R \in \proj(\cM)$ (see also \cite[Proposition 2.27]{Holm_Gorenstein}).
   %%%%%%%%%%%%%%%%%%%%%%%%%%%%%%%%%%% 
   % Argument for $\pd(G) <\infty \implies \pd(G)=0$:
   %%%%%%%%%%%%%%%%%%%%%%%%%%%%%%%%%%%
   % We induct on $\pd(G)=n$. If $n=0$ this is already what we want, so assume as induction hypothesis that any Gorenstein projective $G'$ has the property that $\pd(G')=n-1 \implies \pd(G')=0$. Let $P_0$ be the projective cover of $G$ and let $G'$ be its kernel. By assumption on $G$, $\pd(G')=n-1$. Moreover, since $G$ is Gorenstein, $G'$ is again Gorenstein, being the syzygy of a projective resolution. Induction hypothesis implies that $G'$ is projective, giving us a short exact sequence of the form $0 \to G' \to P_0 \to G \to 0$. But $\Ext^1_\cM(G,G')=0$ since $G'$ is projective (use the totally acyclic complex defining $G$ as the projective resolution of $G$ to compute $\Ext$). It follows that the short exact sequence splits, proving that $G$ is projective as required.
   It follows that $G \in \Cproj{}(\cM)$, as required.

   From here both the containment $\Cproj{}(\cM) \subseteq \Gproj(\cM)$ and the inequality $\Gpd(\cM) \leq \ipd(\cM)$ are immediate.
\end{proof}

\begin{remark}
    The assumption $\ipd(X)<\infty$ in condition \ref{item:isGproj-ipd} is necessary for \cref{prop:Cproj-Gproj} to hold.
    In other words, the containment $\Cproj{}(\cM) \subseteq \Gproj(\cM)$ can be strict. 
    Indeed, consider $\cC=\vec$ and let $\cM=\mod_A$ for some non-semisimple self-injective algebra $A$. Then $\Cproj{}(\cM) = \proj(\cM) \subsetneq \cM = \Gproj(\cM)$. 
\end{remark}

\begin{corollary}\label{cor:Cproj=Gproj}
    Let $\cM$ be a $\cC$-module category. 
    If $\ipd(\cM) = d < \infty$, then 
    \[
    \Cproj{}(\cM) = \Gproj(\cM).
    \]
    In particular, $\Gpd(\cM)=\ipd(\cM)$ and $\cM$ is $d$-Iwanaga--Gorenstein.
\end{corollary}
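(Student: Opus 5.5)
We will deduce the corollary directly from \cref{prop:Cproj-Gproj}. The inclusion $\Cproj{}(\cM)\subseteq\Gproj(\cM)$ is part of that proposition, so only the reverse inclusion needs proof. Let $G\in\Gproj(\cM)$. The hypothesis $\ipd(\cM)=d<\infty$ gives in particular $\ipd(G)\le d<\infty$. Hence $G$ is a Gorenstein projective of finite $\cC$-projective dimension, and the implication from the second condition to the first in \cref{prop:Cproj-Gproj} shows that $G$ is $\cC$-projective. This proves $\Cproj{}(\cM)=\Gproj(\cM)$.

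Since the two resolving subcategories coincide, the corresponding resolution dimensions agree objectwise: $\Gpd(M)=\Xdim{\Gproj(\cM)}(M)=\Xdim{\Cproj{}(\cM)}(M)=\ipd(M)$ for every $M\in\cM$. Taking suprema over $M\in\cM$ gives $\Gpd(\cM)=\ipd(\cM)=d$. By the definition of $d$-Iwanaga--Gorenstein as $\Gpd(\cM)\le d$, it follows that $\cM$ is $d$-Iwanaga--Gorenstein.

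The argument is routine and has no real obstacle. The only point to watch is that the finiteness hypothesis is used for \emph{every} object, including each Gorenstein projective, so that \cref{prop:Cproj-Gproj} can be applied. This is exactly the step that fails without the hypothesis, as the self-injective example in the preceding remark shows.
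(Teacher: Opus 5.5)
Your proposal is correct and is the same argument as the paper's, which simply calls the corollary an immediate consequence of \cref{prop:Cproj-Gproj}. You have spelled out that deduction: the reverse inclusion comes from (ii)$\Rightarrow$(i) applied to each Gorenstein projective. Since the two resolving subcategories coincide, the resolution dimensions $\Gpd$ and $\ipd$ agree.
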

\begin{proof}
    This is an immediate consequence of \cref{prop:Cproj-Gproj}.
\end{proof}

\subsection{Computing finitistic dimension} \label{sec:compute-findim}

\begin{proposition}\label{prop:finite_pd=finite_ipd}
    Let $\cM$ be a $\cC$-module category and let $M\in\cM$. Then
    \[
    \pd(M) <\infty \implies \pd(P\lact M) = \ipd(M) = \pd(L \lact M) = \pd(M)
    \] 
    for any $L\neq 0 \in \cC$ and $P \neq 0\in \proj(\cC)$.
    In particular, $\cC$-projective objects are either projective or have infinite projective dimension.
\end{proposition}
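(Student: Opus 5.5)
By \cref{cor:pdCpdrelations} we already have the chain
\[
\pd(P\lact M) = \ipd(M) \leq \pd(L\lact M) \leq \pd(M),
\]
where the middle inequality comes from taking $P\otimes L$ in place of $P$. So it suffices to prove the single reverse inequality $\pd(M) \leq \ipd(M)$ whenever $\pd(M)<\infty$. The plan is to use the $\cC$-projective resolution of \cref{prop:Cprojdimprojdim} together with the Ext-orthogonality of \cref{lem:orth}.

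Set $d\coloneqq\ipd(M)\le\pd(M)<\infty$. Take a projective resolution $E_\bullet\to M$. By \cref{prop:Cprojdimprojdim}, the syzygy $K\coloneqq\ker(d_{d-1})$ (with $K=M$ if $d=0$) is $\cC$-projective. We then check that $K$ has finite projective dimension. This holds because $K$ is a syzygy of $M$ and $\pd(M)<\infty$; explicitly, $\pd(K)\le\max(\pd(M)-d,0)$, which is finite.

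Now apply \cref{lem:orth} with the $\cC$-projective $K$ in the first argument and $N=K$ itself (finite projective dimension) in the second. This gives $\Ext^i(K,K)=0$ for all $i>0$. More usefully, take a projective cover $0\to K'\to R\to K\to 0$ with $R\in\proj(\cM)$. Since $\Cproj{}(\cM)$ is resolving (\cref{prop:Cprojresolving}), the kernel $K'$ is again $\cC$-projective, and $\pd(K')<\infty$. By \cref{lem:orth}, $\Ext^1(K,K')=0$, so the sequence splits and $K$ is a summand of $R$, i.e.\ projective.

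(An alternative route is induction on $\pd(K)$ using \cref{lem:orth}, exactly as in the argument that Gorenstein projectives of finite projective dimension are projective; \cref{prop:Cproj-Gproj} also gives this directly, since $K$ is Gorenstein projective with finite $\pd$.) Once $K$ is projective, the truncated resolution is a projective resolution of $M$ of length $d$, so $\pd(M)\le d$ and all the quantities in the chain coincide.

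The final claim follows by specialising to a $\cC$-projective $M$: then $\ipd(M)=0$, so if $\pd(M)<\infty$ we get $\pd(M)=0$. The only delicate point in the whole argument is making sure the splitting step applies \cref{lem:orth} to a $\cC$-projective object paired with an object of finite projective dimension, which is why we pass to the kernel $K'$ (itself of finite $\pd$) rather than arguing on $K$ alone.
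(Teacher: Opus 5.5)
Your argument is correct, but it runs in the opposite direction from the paper's. The paper proves $\pd(M)\le\ipd(M)$ directly. With $n=\pd(M)>0$, it picks $N$ with $\Ext^n_\cM(M,N)\neq 0$ and uses $\Ext^{n+1}_\cM(M,-)=0$ to replace $N$ by a projective of the form $P\lact N'$. Adjunction then gives $\Ext^n_\cM(\leftdual{P}\lact M,N')\neq 0$, so $\ipd(M)=\pd(\leftdual{P}\lact M)\geq n$ by \cref{prop:Cprojdimprojdim}. The statement that $\cC$-projectives are projective or of infinite projective dimension comes out only as a consequence.

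You instead prove that final clause first ($\cC$-projective plus finite $\pd$ implies projective), using \cref{lem:orth} and a splitting; this is Holm's argument for Gorenstein projectives, transplanted to $\Cproj{}(\cM)$. You then apply it to the $d$-th syzygy supplied by \cref{prop:Cprojdimprojdim}. The adjunction $\Ext^\bullet_\cM(M,P\lact N')\cong\Ext^\bullet_\cM(\leftdual{P}\lact M,N')$ drives both proofs; in yours it sits inside the base case of \cref{lem:orth}.

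Your route gives slightly more explicit information: whenever $\pd(M)<\infty$, the truncated $\cC$-projective resolution of \cref{prop:Cprojdimprojdim} is already a projective resolution. The cost is some syzygy bookkeeping. The paper's route is a single top-degree Ext computation and needs neither \cref{lem:orth} nor the resolving property.

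Two small remarks. In this section $\cM$ need not be finite, so replace ``projective cover'' by any epimorphism from a projective. Also, you do not need $K'$ to be $\cC$-projective, nor the computation of $\Ext^i_\cM(K,K)$; only $\pd(K')<\infty$ enters the splitting step.
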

\begin{proof}
    By \cref{cor:pdCpdrelations} and \cref{rmk:pdPMindependent}, it suffices to show that $\pd(M) \leq \pd(P \lact M)$ for some non-zero $P \in \proj(\cC)$.
    The case where $M$ is projective is immediate since $P \lact M$ would again be projective. 
    Assume that $\pd M=n\in\mathbb{Z}_{>0}$. Then there is some $N\in\cM$ with $\Ext^n_{\cM}(M,N)\not=0$, and by maximality of $n$ we can replace $N$ by any projective object that surjects onto it, which we can take to be of the form $P\lact N'$ for some (projective) $N'\in\cM$ and non-zero $P\in \proj(\cC)$. In conclusion, we find
    \[0\neq\Ext^n_{\cM}(M,P\lact N')=\Ext^n_{\cM}(P^\ast\lact M,N').\]
    Together with \cref{prop:Cprojdimprojdim}, this implies that $\ipd(M) = \pd (\leftdual{P}\lact M) \geq n$, concluding the proof.
\end{proof}
\begin{remark}\label{rmk:Gpd=ipd=pd}
    Note that we have a chain of inequalities $\Gpd(M) \leq \ipd(M) \leq \pd(M)$ by \cref{prop:Cproj-Gproj} and \cref{cor:pdCpdrelations}.
    Hence in the setting where $\cM \cong \mod_A$ or $\Mod_A$ (i.e.\ the category of all modules) for some $\Bbbk$-algebra $A$, the result above can also be deduced from \cite[Proposition 2.27]{Holm_Gorenstein}, which shows that if $\pd(M) < \infty$, then $\Gpd(M) = \pd(M)$ (see also \cite{Auslander-Bridger_stable}).
\end{remark}

One of the main consequences is that this allows us to compute the finitistic dimension of $\cM$ from $\cP\cM$ (\cref{defn:PM}):
\begin{theorem}\label{thm:PM-computes-findim}
    Let $\cM$ be a $\cC$-module category. Then 
    \[
    \fin.\pd(\cM) = \fin.\pd(\cP\cM).
    \]
    Hence for a finite $\cC$-module category $\cM$, being $\cC$-rep-finite implies $\fin.\pd(\cM) < \infty$.
\end{theorem}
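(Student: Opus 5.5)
The plan is to prove the two inequalities separately. Throughout, \cref{prop:finite_pd=finite_ipd} does the main work.

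For $\fin.\pd(\cP\cM) \leq \fin.\pd(\cM)$ there is nothing to do: $\cP\cM \subseteq \cM$ is a full subcategory, and $\fin.\pd$ is a supremum over objects of finite projective dimension. Shrinking the indexing set can only decrease the supremum.

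For $\fin.\pd(\cM) \leq \fin.\pd(\cP\cM)$, take any $M \in \cM$ with $\pd(M) = d < \infty$. Fix a non-zero projective $P \in \proj(\cC)$; such a $P$ exists, for instance the projective cover $Q$ of $\unit$, which is non-zero. By \cref{prop:finite_pd=finite_ipd}, $\pd(P \lact M) = \pd(M) = d$. Since $P \lact M \in \cP\cM$ by \cref{defn:PM}, the object $P\lact M$ lies in $\cP\cM$ with finite projective dimension $d$. Hence $d \leq \fin.\pd(\cP\cM)$, and taking the supremum over all such $M$ gives the inequality. The degenerate case $M = 0$ (where the convention is $\pd = 0$ or $-\infty$) is harmless, because $P \lact 0 = 0$ is also in $\cP\cM$.

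For the final claim, let $\cM$ be finite and $\cC$-rep-finite, with $\cP\cM = \add(\overline{M})$. Every object of $\cP\cM$ is a finite direct sum of the finitely many indecomposable summands $N_1,\dots,N_r$ of $\overline{M}$. The projective dimension of a finite direct sum is the maximum of the projective dimensions of its summands. Therefore any object of $\cP\cM$ with finite projective dimension has $\pd \leq \max\{\pd(N_i) : \pd(N_i) < \infty\}$, which is a finite number. Combined with the equality already proved, this gives $\fin.\pd(\cM) < \infty$.

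There is no real obstacle here: all the substance is in \cref{prop:finite_pd=finite_ipd}, which we take as given. The only points needing care are that a non-zero projective in $\cC$ exists, and the bookkeeping for direct sums in the rep-finite case.
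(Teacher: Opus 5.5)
Your proposal is correct and follows the paper's proof essentially verbatim: the inclusion $\cP\cM\subseteq\cM$ gives one inequality, and \cref{prop:finite_pd=finite_ipd} applied to $P\lact M\in\cP\cM$ gives the other. Your extra bookkeeping is sound: the existence of a non-zero projective in $\cC$, and the Krull--Schmidt/direct-sum argument for the $\cC$-rep-finite case. The paper simply calls the latter immediate from the definition.
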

\begin{proof}
    Since $\cP\cM \subseteq \cM$, we get $\fin.\pd(\cM) \geq \fin.\pd(\cP\cM)$ by definition.
    On the other hand, let $M \in \cM$ be such that $\pd(M)<\infty$. 
    Then by \cref{prop:finite_pd=finite_ipd} we get 
    \[
    \pd(P\lact M) = \ipd(M) = \pd(M)
    \]
    for any $P \neq 0 \in \proj(\cC)$.
    Since $P\lact M \in \cP\cM$ by definition, we get that $\fin.\pd(\cM) \leq \fin.\pd(\cP\cM)$, as required.
    The final statement follows immediately from the definition of being $\cC$-rep-finite.
\end{proof}

\begin{remark}
    Following essentially the same proof as in \cite[Theorem 2.28]{Holm_Gorenstein}, one can also show that
    \[
    \fin.\pd(\cM) =\fin.\ipd(\cM) = \fin.\Gpd(\cM).
    \]
\end{remark}

%As an immediate corollary, we get:
%\begin{corollary}\label{cor:finite-fin-dim}
%    Let $A$ be a finite dimensional algebra. If $\cM \coloneqq \mod_A$ is $\cC$-rep-finite over some finite tensor category $\cC$, then the finitistic dimension conjecture holds for $A$, i.e.\ $\fin.\pd(\cM) < \infty$.
%\end{corollary}

%%%%%%%%%%%%%%%%%%%%%%%%%%%%%%%%%%%%%%%%
% Results of Auslander--Buchweitz in the general setting
%%%%%%%%%%%%%%%%%%%%%%%%%%%%%%%%%%%%%%%%
\iffalse
\begin{proposition}\label{prop:ABapproxC-proj}
    Let $\cM$ be a $\cC$-module category.
    The following are equivalent for any $M \in \cM$:
    \begin{enumerate}
        \item $\ipd(M)< \infty$.
        \item \label{item:AB-approx-eqn}There exists a short exact sequence 
        \begin{equation}
            0 \to Y_M \to E_M \to M \to 0
        \end{equation}
        with $\pd(Y_M) <\infty$ and $E_M \in \Cproj{}(\cM)$.
        \item There exists a short exact sequence 
        \begin{equation}
            0 \to M \to Y^M \to E^M \to 0
        \end{equation}
        with $\pd(Y^M)<\infty$ and $E^M\in \Cproj{}(\cM)$.
    \end{enumerate}
\end{proposition}
\begin{proof}
    This is a special case of \cite[Proposition 1.5]{AB_CMapprox}, with resolving subcategory $\cX=\Cproj{}(\cM)$ and $\omega=\proj(\cM)$, noting that $\proj(\cA)$ is a cogenerator of $\Cproj{}(\cM)$ by \cref{prop:Cprojresolving}.
\end{proof}
\fi

We will need the following result in \cref{sec:PM-AR-translate}, which is essentially a special case of \cite[Proposition 1.5]{AB_CMapprox} by Auslander--Buchweitz; see also \cite[Theorem 6.1.3]{Krause_HomThRep}:
\begin{corollary}\label{cor:approx}
    Let $M \in \cM$ be such that $\ipd(M)=d<\infty$. 
    There exists a short exact sequence
        \begin{equation} \label{eqn:rightC-projapprox}
            0 \to Y_M \to E_M \xrightarrow{p} M \to 0
        \end{equation}
    with $\pd(Y_M)=d-1$ (unless $d=0$, in which case $\pd(Y_M)=0$) and $E_M \in \Cproj{}(\cM)$.
    Moreover, $p:E_M \to M$ is a right $\Cproj{}(\cM)$-approximation of $M$.
\end{corollary}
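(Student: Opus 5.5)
We construct the sequence from a projective resolution of $M$, using \cref{prop:Cprojdimprojdim} to cut it at the right place, and then push out along a projective coresolution of the resulting $\cC$-projective syzygy, as in Auslander--Buchweitz.

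\textbf{The case $d=0$.} Here $M$ is $\cC$-projective, and we take $E_M=M$, $p=\id$, $Y_M=0$.

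\textbf{The case $d\geq 1$.} Choose a projective resolution of $M$ as in \eqref{eqn:projres}. By \cref{prop:Cprojdimprojdim} with $n=d$, the syzygy $K\coloneqq\ker(d_{d-1})$ is $\cC$-projective, and
\[
0\to K\to E_{d-1}\to\cdots\to E_0\to M\to 0
\]
is exact. By \cref{prop:Cprojresolving}, $\proj(\cM)$ cogenerates $\Cproj{}(\cM)$. Iterating this $d$ times gives an exact sequence
\[
0\to K\to G_{d-1}\to\cdots\to G_0\to K'\to 0
\]
with $G_i\in\proj(\cM)$ and $K'\in\Cproj{}(\cM)$.

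We lift $\id_K$ to a chain map from the truncated projective resolution $K\to E_{d-1}\to\cdots\to E_0$ to this coresolution, going rightward. At each step the obstruction is an extension of a $\cC$-projective by a projective, and it vanishes by \cref{lem:orth}, since projectives have finite projective dimension. The mapping cone, after cancelling the two copies of $K$, gives the standard Auslander--Buchweitz sequence $0\to Y_M\to E_M\to M\to 0$. Here $Y_M$ has a finite projective resolution of length $\le d-1$ built from the $G_i$ and $E_i$, and $E_M$ is an iterated extension of $K'$ by projectives, hence $\cC$-projective since $\Cproj{}(\cM)$ is resolving.

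A simpler alternative is induction on $d$. For $d=1$, take $0\to K\to E_0\to M\to 0$ and push out along $0\to K\to G\to K'\to 0$. This gives $0\to G\to E_M\to M\to 0$ with $E_M$ an extension of $K'$ by $E_0$, so $E_M\in\Cproj{}(\cM)$ and $Y_M=G$ is projective. For general $d$, apply the inductive hypothesis to $\Omega M$, which has $\ipd=d-1$ by \cref{prop:Cprojdimprojdim} applied to $P\lact\Omega M=\Omega(P\lact M)$ up to projectives. This gives $0\to Y'\to E'\to\Omega M\to 0$ with $\pd(Y')\le d-2$. Push out $0\to\Omega M\to E_0\to M\to 0$ along $\Omega M\to E'$-cokernel data, using the cogenerator sequence $0\to E'\to G\to E''\to 0$ as in the standard proof. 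I would follow this inductive route, which gives $\pd(Y_M)\le d-1$.

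\textbf{Exact value $\pd(Y_M)=d-1$.} If $\pd(Y_M)\le d-2$, then applying $P\lact-$ to the sequence gives
\[
0\to P\lact Y_M\to P\lact E_M\to P\lact M\to 0
\]
with middle term projective. This forces $\pd(P\lact M)\le d-1$, contradicting $\ipd(M)=d$ by \cref{prop:Cprojdimprojdim} (which also gives $\pd(P\lact Y_M)=\pd(Y_M)$ by \cref{prop:finite_pd=finite_ipd}).

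\textbf{Approximation property.} Let $E\in\Cproj{}(\cM)$ and $f\colon E\to M$. The obstruction to lifting $f$ through $p$ lies in $\Ext^1(E,Y_M)$, which vanishes by \cref{lem:orth} since $\pd(Y_M)<\infty$.

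The main obstacle is the inductive pushout construction ensuring simultaneously that $E_M$ is $\cC$-projective and $Y_M$ has the right projective dimension. Since this is exactly \cite[Proposition 1.5]{AB_CMapprox} applied with $\cX=\Cproj{}(\cM)$ and $\omega=\proj(\cM)$, whose hypotheses (resolving, cogenerated by $\omega$, and $\omega\subseteq\cX$ being $\Ext$-injective in $\cX$, the last by \cref{lem:orth}) hold by \cref{prop:Cprojresolving}, I would ultimately just verify these hypotheses and cite it.
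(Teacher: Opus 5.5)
Your final route is essentially the paper's proof: you cite Auslander--Buchweitz with $\cX=\Cproj{}(\cM)$ and $\omega=\proj(\cM)$, after checking the hypotheses via \cref{prop:Cprojresolving} and \cref{lem:orth}; you pin down $\pd(Y_M)=d-1$ by applying $P\lact-$ together with \cref{prop:Cprojdimprojdim} and \cref{prop:finite_pd=finite_ipd}; and you get the approximation property from $\Ext^1_\cM(E,Y_M)=0$ by \cref{lem:orth}. Do discard the mapping-cone sketch, though. Lifting $\id_K$ from the resolution to the coresolution meets the obstruction $\Ext^1_\cM(\Omega^{d-1}M,G_{d-1})\cong\Ext^d_\cM(M,G_{d-1})$, which need not vanish: for $\cC=\vec$ and $\pd(M)=d\geq 1$ such a lift would split $K\hookrightarrow E_{d-1}$. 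Moreover, in either direction the cone only computes the kernel and cokernel of the induced map between $M$ and $K'$. The construction that actually works is the inductive pushout, with the hull $0\to\Omega M\to Y''\to E''\to 0$ obtained by pushing $0\to Y'\to E'\to\Omega M\to 0$ out along $E'\hookrightarrow G$.
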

\begin{proof}
    The existence of the short exact sequence \eqref{eqn:rightC-projapprox} with $\pd(Y_M)<\infty$ and $E_M \in \Cproj{}(\cM)$ is a special case of \cite[Proposition 1.5]{AB_CMapprox}, with resolving subcategory $\cX\coloneqq \Cproj{}(\cM)$ and $\omega\coloneqq \proj(\cM)$, noting that $\proj(\cM)$ is a cogenerator of $\Cproj{}(\cM)$ by \cref{prop:Cprojresolving}.
    To show that $\pd(Y_M)=d-1$, first note that any length $m$ $\cC$-projective resolution of $Y_M$ induces a length $m+1$ $\cC$-projective resolution of $M$.
    It then follows from \cref{prop:Cprojdimprojdim} that we must have $\ipd(Y_M) = d-1$.
    Now since $\pd(Y_M) <\infty$, we get $\pd(Y_M)=\ipd(Y_M)=d-1$ by \cref{prop:finite_pd=finite_ipd}, as required.

    Finally, for any $E \in \Cproj{}(\cM)$, we have an exact sequence
    \[\Hom_\cM(E,E_M) \to \Hom_\cM(E,M)\to \Ext^1_{\cM}(E,Y_M).\]
    By Lemma~\ref{lem:orth}, the right term is zero, and hence the left map is surjective, proving that $E_M$ is a right $\Cproj{}(\cM)$-approximation as required.
\end{proof}

We spell out the construction of \eqref{eqn:rightC-projapprox} in the particular case of $\ipd(M) \leq 1$, since it is the case that we will need later (\cref{thm:PM-ARtrans}):
\begin{example}
    Let $M \in \cM$ be such that $\ipd(M) \leq 1$.
    Then by \cref{prop:Cprojdimprojdim} we know that there exists a $\cC$-projective resolution
    \[
    0 \to E \to Q \to M \to 0
    \]
    with $Q \in \proj(\cM)$ and $E \in \cC$-$\proj(\cM)$.
    On the other hand, using the short exact sequence in $\cC$: $0 \to \mathds{1} \to P\otimes P^* \to K \to 0$, we obtain the following short exact sequence in $\cM$:
    \[
    0 \to E \to P\otimes P^*\lact E \to K \lact E\to 0.
    \]
    The pushout of the two short exact sequences in $\cM$ gives
    \[
    \adjustbox{scale=0.8}{
    \begin{tikzcd}
     & 0 \ar[d] & 0 \ar[d] & 0 \ar[d] \\
    0 \ar[r] & E \ar[r] \ar[d] & Q \ar[r] \ar[d,dashed] & M \ar[r] \ar[d] & 0 \\
    0 \ar[r] & P\otimes P^* \lact E \ar[r,dashed] \ar[d] & E_M \ar[r,dashed] \ar[d,dashed] & M \ar[r] \ar[d] & 0 \\
    0 \ar[r] & K \lact E \ar[r] \ar[d] & K \lact E \ar[r] \ar[d] & 0 \ar[r] \ar[d] & 0 \\
     & 0 & 0 & 0 
    \end{tikzcd}
    },
    \]
    with exact rows and columns.
    Since $Q$ is projective -- hence $\cC$-projective -- and $K\lact E$ is $\cC$-projective (since $E$ is $\cC$-projective), it follows that their extension $E_M$ is also $\cC$-projective.
    Thus the middle row 
    \[
    0 \to P\otimes P^*\lact E \to E_M \to M \to 0
    \]
    is a $\cC$-projective resolution of $M$ with $E_M$ a $\cC$-projective and $P\otimes P^* \lact E$ a projective; compare this with the $\cC$-projective resolution of $M$ that we started with (i.e.\ the top row). 
    By \cref{cor:approx}, $E_M \twoheadrightarrow M$ is moreover a right $\cC$-projective approximation.
\end{example}

\subsection{Internal Hom and Ext} \label{sec:internal-hom-ext}
In this section we focus on the case where $\cM$ is moreover a Hom-finite $\cC$-module category, which allows us to define the following; cf.\ \cite[\S7.9]{EGNO}:
\begin{definition}
    Suppose $\cM$ is a Hom-finite $\cC$-module category.
    Let $M$ and $N$ be objects in $\cM$.
    Since $-\lact -$ is exact in both variables, the functor $\Hom_\cM(-\lact M, N): \cC^{\op} \to \vec$ is left exact, and hence is representable: the representing object is denoted by $\iHom_\cM(M,N)$, called the \emph{internal hom} from $M$ to $N$, satisfying
    \begin{equation} \label{eqn:inthom}
    \Hom_\cM(-\lact M,N) \cong \Hom_\cC(-,\iHom_\cM(M,N)).
    \end{equation}
\end{definition}
Note that $\iHom_\cM(-,-)$ preserves limits in both variables; precisely, $\iHom_\cM(M,-)$ preserves limits and the contravariant functor $\iHom_\cM(-,N)$ sends colimits to limits.
%The proof just uses the fact that adjoint functors (coming from duals in $\cC$) preserve (co)limits, and that the usual hom functor preserves limits.
It follows that $\iHom_\cM(-,-)$ is left exact in both variables.
We also have the following isomorphisms that are natural in all inputs $X \in \cC$ and $M,N \in \cM$:
\begin{itemize}
    \item $\iHom_\cM(M, X\lact N) \cong X\otimes \iHom_\cM(M,N)$.
    \item $\iHom_\cM(X \lact M, N) \cong \iHom_\cM(M,N)\otimes \leftdual{X}$.
\end{itemize}
In particular, the functor $\iHom_\cM(M,-): \cM \to \cC$ is a left $\cC$-module functor, and $\iHom_\cM(-,M): \rightop{\cM} \to \cC$ is a right $\cC$-module functor (see \cref{defn:inducedrightmod}).

From \eqref{eqn:inthom} we see that $-\lact M: \cC \to M$ is the left adjoint of $\iHom_\cM(M,-): \cM \to \cC$ for all $M \in \cM$ (as $\cC$-module functors).
We denote by 
\begin{equation}
    \coev_M : \id_\cC \Rightarrow \iHom_\cM(M,-\lact M), \quad \ev_M: \iHom_\cM(M,-)\lact M \Rightarrow \id_\cM
\end{equation} 
the corresponding unit (coevaluation) and counit (evaluation) respectively.
Note that $\coev_M(\unit): \unit \to \iHom_\cM(M,M)$ is therefore the image of the identity $\id_M \in \Hom_\cM(M,M)$ under the isomorphism in \eqref{eqn:inthom}.
Since $\coev_M$ and $\ev_M$ are natural transformations of $\cC$-module functors, the following diagrams are commutative for all $X, E \in \cC$ and $N \in \cM$:
\[
\begin{tikzcd}[column sep=huge]
    X \otimes E \ar[r, "\coev_M(X\otimes E)"] \ar[d, "="] & \iHom_\cM(M, X\otimes E\lact M) \ar[d, "\cong"] \\
    X \otimes E \ar[r, "X \otimes \coev_M(E)"] & X \otimes \iHom_\cM(M, E\lact M)
\end{tikzcd},
\begin{tikzcd}[column sep=large]
    \iHom_\cM(M, X \lact N) \lact M \ar[r, "\ev_M(X\lact N)"] \ar[d, "\cong"] & X\lact N \ar[d,"="] \\
    X \otimes \iHom_\cM(M, N) \lact M \ar[r, "X \otimes \ev_M(N)"] & X \lact N
\end{tikzcd}.
\]
\begin{definition}
    Let $\cM$ be a Hom-finite $\cC$-module category.
    Suppose that $\cM$ has enough injectives (and as always enough projectives).
    We define $\iExt^i_\mathcal{M}(X,-)$ to be the $i$-th derived functor of the left exact functor $\iHom_\mathcal{M}(X,-)$, which we call the \emph{internal Ext functors}.
    Similarly, $\iHom_\mathcal{M}(-,X)$ is a left exact (contravariant) functor and we define $\iExt^i_\mathcal{M}(-,X)$ to be its $i$-th derived functor.
\end{definition}

\begin{lemma}\label{lem:Ext-iExt_AgreeOnProj}
Let $\cM$ be a Hom-finite $\cC$-module category with enough injectives.
\begin{enumerate}
    \item \label{item:intExtfromproj}We have $\Hom_{\cC}(P,\iExt^\bullet_\cM(X,Y)) \cong \Ext^\bullet_\cM(P\lact X,Y)$ for all $P \in \proj(\cC)$ which is natural in $P\in\proj(\cC)$ and both $X, Y\in \cM$.
    \item \label{item:ExtMN} $\iExt^i_\mathcal{M}(M,-)$ applied to $N$ and $\iExt^i_\mathcal{M}(-,N)$ applied to $M$ are isomorphic.
\end{enumerate}
    
\end{lemma}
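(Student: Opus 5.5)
For \ref{item:intExtfromproj}, the plan is to compare two derived functors of the same left exact functor. For $P \in \proj(\cC)$ and $X \in \cM$, the isomorphism \eqref{eqn:inthom} gives
\[
\Hom_\cC(P,\iHom_\cM(X,-)) \cong \Hom_\cM(P\lact X,-),
\]
naturally in $P$, $X$ and the argument. I will take an injective coresolution $Y \to I^\bullet$ in $\cM$ and apply $\iHom_\cM(X,-)$, whose cohomology is $\iExt^\bullet_\cM(X,Y)$ by definition. Since $P$ is projective, $\Hom_\cC(P,-)$ is exact and commutes with taking cohomology. Hence
\[
\Hom_\cC(P,\iExt^i_\cM(X,Y)) \cong H^i\bigl(\Hom_\cC(P,\iHom_\cM(X,I^\bullet))\bigr) \cong H^i\bigl(\Hom_\cM(P\lact X, I^\bullet)\bigr) = \Ext^i_\cM(P\lact X,Y).
\]
Naturality in $P$ and $X$ comes from naturality of \eqref{eqn:inthom}, and naturality in $Y$ from the standard comparison of injective coresolutions.

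For \ref{item:ExtMN}, I would run the same argument with a projective resolution $Q_\bullet \to M$ in $\cM$ and the contravariant functor $\iHom_\cM(-,N)$. This gives
\[
\Hom_\cC(P,\iExt^i_\cM(-,N)(M)) \cong \Ext^i_\cM(P\lact M,N),
\]
again naturally in $P \in \proj(\cC)$. Consequently both objects $E_1 := \iExt^i_\cM(M,-)(N)$ and $E_2 := \iExt^i_\cM(-,N)(M)$ of $\cC$ satisfy
\[
\Hom_\cC(-,E_1)|_{\proj(\cC)} \cong \Hom_\cC(-,E_2)|_{\proj(\cC)} \cong \Ext^i_\cM(-\lact M,N)|_{\proj(\cC)}
\]
as functors on $\proj(\cC)$. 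Since $\cC$ is finite, an object $E$ is determined up to isomorphism by the functor $\Hom_\cC(-,E)$ restricted to $\proj(\cC)$. Concretely, with $G$ a projective generator, $E \mapsto \Hom_\cC(G,E)$ is an equivalence $\cC \simeq \mod_{\End(G)^{\op}}$, so the Yoneda-type argument on the full subcategory $\proj(\cC)$ yields $E_1 \cong E_2$. Because the identification is natural in $P$, this is functorial in $M$ and $N$.

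The main obstacle is making sure the naturality in $P$ (including morphisms between projectives) survives the passage to cohomology in both constructions. That compatibility is exactly what the equivalence $\cC \simeq \mod_{\End(G)^{\op}}$ requires, and it holds because \eqref{eqn:inthom} is natural in the $\cC$-variable.
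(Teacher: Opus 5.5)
Your proposal is correct and follows the paper's proof: (i) comes from exactness of $\Hom_\cC(P,-)$ for projective $P$, which lets it commute with the derived functors of the internal Hom adjunction, and (ii) comes from the fact that an object $E\in\cC$ is determined up to isomorphism by $\Hom_\cC(-,E)$ restricted to $\proj(\cC)$. You also make explicit the contravariant analogue of (i), which the paper leaves implicit when deducing (ii) from (i); this uses that $P\lact Q_\bullet$ is a projective resolution of $P\lact M$, since $P\lact -$ is exact and preserves projectives.
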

\begin{proof}
By definition of the internal hom, we have an isomorphism of functors:
    \[
    \Hom_\cM(P\lact X, - ) \cong \Hom_\cC(P, \iHom_\cM(X,-)).
    \]
    Note that $\Ext^\bullet_\cM(P\lact X,-)$ are the derived functors of $\Hom_\cM(P\lact X, - )$.
    On the other hand, for $P \in \proj(\cC)$, $\Hom_\cC(P, -)$ is moreover an exact functor, and so commutes with taking derived functors. This gives us the isomorphism
    \[
    \Ext^\bullet_\cM(P\lact X,-) \cong \Hom_\cC(P, \iExt^\bullet_\cM(X,-))
    \]
    as required for part \ref{item:intExtfromproj}.

    Note that two objects $X,Y\in\cC$ are isomorphic if and only if the functor
    \[\Hom_{\cC}(-,X):\proj(\cC)^{\op}\to\vec\]
    is isomorphic to the corresponding one for $Y$. Hence \ref{item:ExtMN} follows from \ref{item:intExtfromproj}.
\end{proof}

\begin{lemma}\label{lem:CprojHomfinite}
    Let $\cM$ be a Hom-finite $\cC$-module category with enough injectives and $E \in \cM$.
    The following are equivalent:
    \begin{enumerate}
        \item \label{item:Cproj} $E$ is $\cC$-projective.
        \item \label{item:Cprojforall} $Q\lact E$ is projective for all $Q \in \proj(\cC)$.
        \item \label{item:iHomexact} The internal hom functor $\iHom_\cM(E,-)$ is exact.
        \item \label{item:Ext1} $\iExt_\cM^1(E,-)=0$.
        \item \label{item:Exti} For all $i>0$, $\iExt_\cM^i(E,-)=0$.
    \end{enumerate}
\end{lemma}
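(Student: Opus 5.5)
The equivalence (i)$\Leftrightarrow$(ii) is \cref{lem:Cproj}, so it remains to connect (ii) with (iii)--(v). The tool throughout is \cref{lem:Ext-iExt_AgreeOnProj}\ref{item:intExtfromproj}: for every $P\in\proj(\cC)$ and all $Y \in \cM$,
\[
\Hom_\cC(P,\iExt^i_\cM(E,Y))\cong \Ext^i_\cM(P\lact E,Y),
\]
naturally in $P$ and $Y$. We also use the elementary fact that an object $Z\in\cC$ vanishes iff $\Hom_\cC(P,Z)=0$ for all $P\in\proj(\cC)$; since $\cC$ is finite, it even suffices to take $P$ to be a projective generator.

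For (ii)$\Rightarrow$(v), take $Q\lact E$ projective for all $Q\in\proj(\cC)$. Then $\Ext^i_\cM(Q\lact E,Y)=0$ for all $i>0$ and all $Y$. By the displayed isomorphism $\Hom_\cC(Q,\iExt^i_\cM(E,Y))=0$ for every projective $Q$, so $\iExt^i_\cM(E,Y)=0$.

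The implication (v)$\Rightarrow$(iv) is trivial. For (iv)$\Rightarrow$(ii), reverse the previous argument: $\iExt^1_\cM(E,-)=0$ gives $\Ext^1_\cM(Q\lact E,-)=0$ for every projective $Q$, so $Q\lact E$ is projective in $\cM$.

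It remains to fit (iii) in.

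\textbf{(iv)$\Rightarrow$(iii).} $\iHom_\cM(E,-)$ is left exact, and its first right derived functor vanishes, so it is exact.

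\textbf{(iii)$\Rightarrow$(ii).} This is where some care is needed, since we cannot simply quote a derived-functor fact in the wrong direction. If $\iHom_\cM(E,-)$ is exact, then for $Q\in\proj(\cC)$ the functor
\[
\Hom_\cM(Q\lact E,-)\cong\Hom_\cC(Q,\iHom_\cM(E,-))
\]
is a composite of exact functors, because $\Hom_\cC(Q,-)$ is exact. Hence $Q\lact E$ is projective.

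Alternatively, (iii)$\Rightarrow$(iv) follows directly: an exact functor has vanishing higher right derived functors, computed via injective resolutions, which exist by assumption.

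The only point requiring attention is that derived functors are computed in $\cM$ with enough injectives. This is part of the hypotheses and is already built into \cref{lem:Ext-iExt_AgreeOnProj}, so no step should present a genuine obstacle.
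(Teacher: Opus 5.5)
Your proof is correct and uses the same ingredients as the paper's: the adjunction $\Hom_\cC(P,\iHom_\cM(E,-))\cong\Hom_\cM(P\lact E,-)$, its derived form in \cref{lem:Ext-iExt_AgreeOnProj}, and detection in $\cC$ via $\Hom_\cC(P,-)$ for projective $P$. The differences are only in how the implications are arranged: the paper proves (ii)$\Leftrightarrow$(iii) directly from the adjunction (detecting exactness in $\cC$ by projectives) and then closes (iii)$\Rightarrow$(v)$\Rightarrow$(iv)$\Rightarrow$(iii), whereas you connect (ii) to (iv) and (v) through the derived isomorphism and get (iii) from (iv) by the long exact sequence.
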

\begin{proof}
    The equivalence $\ref{item:Cproj} \iff \ref{item:Cprojforall}$ is \cref{lem:Cproj}.
    
    By definition of the internal hom, we have 
    \[
    \Hom_\cC(P,\iHom_\cM(E,-)) \cong \Hom_\cM(P\lact E,-).
    \]
    If $\iHom_\cM(E,-)$ is exact, then the LHS is also exact for any $P\in \proj(\cC)$.
    In turn, exactness of the RHS implies that $P\lact E \in \proj(\cM)$, giving us $\ref{item:iHomexact}\implies \ref{item:Cprojforall}$.

    On the other hand, suppose that $P\lact E \in \proj(\cM)$ for all $P\in \proj(\cC)$.
    Then the RHS is exact for all $P \in \proj(\cC)$.
    Since $\cC$ has enough projectives, exactness of $\Hom_\cC(P,\iHom_\cM(E,-))$ for all $P \in \proj(\cC)$ implies the exactness of $\iHom_\cM(E,-)$.
    This shows $\ref{item:Cprojforall} \implies \ref{item:iHomexact}$.
    
    That $\ref{item:iHomexact}\implies\ref{item:Exti}$ follows from Lemma~\ref{lem:Ext-iExt_AgreeOnProj}\ref{item:ExtMN}. That $\ref{item:Exti}\implies\ref{item:Ext1}$ is trivial. That $\ref{item:Ext1}\implies\ref{item:iHomexact}$ follows from Lemma~\ref{lem:Ext-iExt_AgreeOnProj}, which completes the proof.
\end{proof}

\begin{proposition} \label{prop:ipd=iExt}
Suppose $\cM$ is a Hom-finite $\cC$-module category with enough injectives and let $X \in \cM$. Then 
\begin{equation}
    \ipd(X) = \min \{ d \mid \iExt^{d+1}_\cM(X,-) = 0 \}.
\end{equation}
($\ipd(X) = \infty$ if $\iExt^{d+1}_\cM(X,-) \neq 0$ for all $d$.)
\end{proposition}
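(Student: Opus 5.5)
The plan is to reduce the statement to the already established \cref{prop:Cprojdimprojdim}, using the translation between internal Ext and ordinary Ext from \cref{lem:Ext-iExt_AgreeOnProj}. Fix a non-zero $P\in\proj(\cC)$; we may as well take $P=Q$, the projective cover of $\unit$, although later we will need all projectives at once. By \cref{lem:Ext-iExt_AgreeOnProj}\ref{item:intExtfromproj}, for every $i$ and every $N\in\cM$ we have
\[
\Hom_\cC(P',\iExt^i_\cM(X,N))\cong \Ext^i_\cM(P'\lact X,N)\quad\text{for all }P'\in\proj(\cC).
\]
An object $Z\in\cC$ vanishes if and only if $\Hom_\cC(P',Z)=0$ for all $P'\in\proj(\cC)$, because $\cC$ has enough projectives. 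Hence $\iExt^{i}_\cM(X,-)=0$ holds if and only if $\Ext^i_\cM(P'\lact X,-)=0$ for every $P'\in\proj(\cC)$.

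Next, I would use the standard characterisation $\pd(Y)\le d$ if and only if $\Ext^{d+1}_\cM(Y,-)=0$, which holds since $\cM$ has enough projectives: take the $d$-th syzygy $\Omega^d Y$; the condition $\Ext^1(\Omega^dY,-)=\Ext^{d+1}(Y,-)=0$ forces $\Omega^d Y$ to be projective. Combining this with the previous paragraph gives
\[
\iExt^{d+1}_\cM(X,-)=0 \iff \pd(P'\lact X)\le d \text{ for all } P'\in\proj(\cC).
\]
By \cref{prop:Cprojdimprojdim} together with \cref{rmk:pdPMindependent}, we have $\pd(P'\lact X)=\ipd(X)$ for every non-zero $P'$, and the case $P'=0$ is trivial. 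So the right-hand side is equivalent to $\ipd(X)\le d$, and taking the minimum over $d$ yields the stated formula, including the case where both sides are $\infty$.

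The only mildly delicate step is the dimension-shifting claim: one has to be careful that $\Ext^{d+1}(Y,-)=0$ on all of $\cM$ identifies with $\Ext^1(\Omega^dY,-)=0$, so that $\Omega^dY$ is projective. This is standard once we use a projective resolution of $Y$. Everything else is a direct invocation of earlier results. Alternatively, the case $d=0$ is exactly \cref{lem:CprojHomfinite}, and one could induct on $d$ via syzygies using that $\Cproj{}(\cM)$ is resolving; the direct route above seems cleaner.
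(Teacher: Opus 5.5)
Your proof is correct, but it takes a somewhat different route from the paper.

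\textbf{The paper's route.} The paper argues directly with internal Ext as a derived functor. By \cref{lem:CprojHomfinite}, the $\cC$-projectives are exactly the objects acyclic for $\iHom_\cM(-,Y)$; vanishing of $\iExt^1$ already forces vanishing of all higher $\iExt$. So dimension shifting along a projective resolution identifies $\iExt^{d+1}_\cM(X,-)$ with $\iExt^1_\cM$ of the $d$-th syzygy. That syzygy is $\cC$-projective precisely when $\ipd(X)\le d$. This is essentially the inductive alternative you sketch in your last sentence.

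\textbf{Your route.} You use only \cref{lem:Ext-iExt_AgreeOnProj}\ref{item:intExtfromproj} to translate the vanishing of $\iExt^{d+1}_\cM(X,-)$ into the vanishing of $\Ext^{d+1}_\cM(P'\lact X,-)$ for all $P'\in\proj(\cC)$. You then do the dimension shifting with ordinary Ext and ordinary projective dimension, and conclude with $\pd(P'\lact X)=\ipd(X)$ from \cref{prop:Cprojdimprojdim}.

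\textbf{Comparison.} Your version never treats $\iExt$ as a derived functor in the first variable. The paper's acyclicity formulation does, since it passes through \cref{lem:CprojHomfinite}, whose proof relies on the two-variable comparison \cref{lem:Ext-iExt_AgreeOnProj}\ref{item:ExtMN}. Your version also records a slightly sharper intermediate fact: $\iExt^{d+1}_\cM(X,-)=0$ if and only if $\pd(P\lact X)\le d$ for one (equivalently every) non-zero $P\in\proj(\cC)$. The paper's version is the more intrinsic principle that ``resolution dimension by acyclic objects equals the vanishing degree of the derived functor'', which is why it fits in one line.
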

\begin{proof}
    This is immediate since $\cC$-projectives are precisely the acyclic objects with respect to $\iHom_\cM(-,Y)$ by \cref{lem:CprojHomfinite}.
\end{proof}
The proposition has the following immediate consequence.
\begin{corollary}
    Let $\cM$ be a Hom-finite $\cC$-module category with enough injectives.
    The following are equivalent, for $d\in\mathbb{N}$:
    \begin{enumerate}
        \item $\iExt^{d+1}_{\cM}=0$ and $\iExt^d_\cM \neq 0$.
        \item The $\cC$-projective dimension of $\cM$ is $d$.
    \end{enumerate}
\end{corollary}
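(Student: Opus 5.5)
The plan is to derive the corollary directly from \cref{prop:ipd=iExt}, applied objectwise and then taking suprema over $X\in\cM$. Here $\iExt^{j}_\cM=0$ is read as the vanishing of $\iExt^j_\cM(X,Y)$ for all $X,Y\in\cM$. By \cref{lem:Ext-iExt_AgreeOnProj}\ref{item:ExtMN}, it does not matter whether this is computed via the derived functors of $\iHom_\cM(X,-)$ or of $\iHom_\cM(-,Y)$.

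The first step is the monotonicity of vanishing. If $\iExt^{d+1}_\cM(X,-)=0$ for a given $X$, then $\iExt^{j}_\cM(X,-)=0$ for all $j\geq d+1$. To see this, consider the truncated $\cC$-projective resolution from \cref{prop:ipd=iExt}: it gives $\ipd(X)\leq d$. Take a resolution of length $\leq d$ by $\cC$-projectives; these are acyclic for $\iHom_\cM(-,Y)$ by \cref{lem:CprojHomfinite}. Hence the higher derived functors $\iExt^j_\cM(X,Y)$ vanish for $j>d$, computed via this resolution and then transferred using \cref{lem:Ext-iExt_AgreeOnProj}\ref{item:ExtMN}. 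The upshot is that for each $X$,
\[
\iExt^{j}_\cM(X,-)=0 \iff j>\ipd(X).
\]
The nontrivial direction holds because $\ipd(X)$ is the minimal $d$ with $\iExt^{d+1}_\cM(X,-)=0$, so $\iExt^{\ipd(X)}_\cM(X,-)\neq 0$ whenever $\ipd(X)\geq 1$. When $\ipd(X)=0$, we have $\iExt^0_\cM(X,-)=\iHom_\cM(X,-)$, which is nonzero unless $X=0$.

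The second step takes the supremum. The condition $\iExt^{d+1}_\cM=0$ holds iff $\ipd(X)\leq d$ for all $X$, i.e.\ iff $\ipd(\cM)\leq d$. The condition $\iExt^d_\cM\neq0$ holds iff some $X$ has $\ipd(X)\geq d$. For $d=0$, the latter just says $\cM\neq 0$, since $\iHom_\cM(X,X)\neq0$ for $X\neq0$ because $\Hom_\cM(X,X)=\Hom_\cC(\unit,\iHom_\cM(X,X))$. Combining the two conditions gives $\ipd(\cM)=d$, and conversely.

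The only delicate point is the degenerate case $d=0$ with $\cM=0$, which I would exclude tacitly, and the monotonicity of vanishing, which rests on \cref{lem:CprojHomfinite} identifying $\cC$-projectives as acyclic objects. No real obstacle is expected.
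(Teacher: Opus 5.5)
Your proposal is correct and follows the paper's route. The paper records this corollary as an immediate consequence of \cref{prop:ipd=iExt}, and your argument unpacks exactly that: apply the proposition objectwise, use \cref{lem:CprojHomfinite} for the vanishing in all degrees above $\ipd(X)$, then take the supremum over $X\in\cM$. Your caveat about $d=0$ with $\cM=0$ is a genuine but trivial exception to the statement as written, and the paper also passes over it silently.
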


\section{Functor categories and Auslander--Reiten translations}\label{sec:functor-cat}
Throughout this section, we fix $\cC$ to be a finite tensor category and $\cM$ a \emph{finite} left $\cC$-module category.
%Nonetheless, we note that \cref{lem:intYoneda} holds for non-finite $\cM$.

\subsection{Enriched Yoneda lemma and functor categories}\label{sec:enriched-Yoneda}
Let $\ind(\cC)$ denote the ind-completion of $\cC$: this can be constructed as the full subcategory of presheaves $\Fun(\cC^{\op},\Vec)$ that are small filtered colimits of representable functors.
More concretely, $\ind(\cC)$ can be realised as the category of left exact functors from $\cC^{\op} \to \Vec$ (see e.g.\ \cite[\S 8.6]{Kashiwara-Schapira_Sheaves}), hence $\ind(\cC) \cong \Fun(\proj(\cC)^{\op},\Vec)$.
The important point for us is that $\ind(\cC)$ has all small filtered colimits and is a monoidal category which has exact tensor products, i.e.\ $X\otimes -$ and $-\otimes X$ are exact endofunctors (though as a monoidal category, its objects need not have duals). 
In particular, $\ind(\cC)$ is both a left and right module category over $\cC$ given by:
\[
X\lact F \coloneqq F(\leftdual{X}\otimes -); \qquad F\ract X \coloneqq F(-\otimes \rightdual{X})
\]

Recall the definition of $\rightop{\cM}$ from \cref{defn:inducedrightmod}.
\begin{definition}
    We denote by $\Fun_\cC(\cM,\ind(\cC))$ the category of left $\cC$-module functors from $\cM$ to $\ind(\cC)$, with morphisms given by $\cC$-module natural transformations.
    Similarly, $\Fun_\cC(\rightop{\cM},\ind(\cC))$ denotes the category of right $\cC$-module functors from $\rightop{\cM}$ to $\ind(\cC)$.
\end{definition}
Note that both $\Fun_\cC(\cM,\ind(\cC))$ and $\Fun_\cC(\rightop{\cM},\ind(\cC))$ are abelian categories with kernels and cokernels computed point-wise: $\ind(\cC)$ is an abelian category, and the coherence of $\cC$-module functors ensures that kernels and cokernels are again $\cC$-module functors.

Most results in this section will have two parts: one relevant to $\Fun_\cC(\cM,\ind(\cC))$ and the other relevant to $\Fun_\cC(\rightop{\cM},\ind(\cC))$.
Since the proofs for both cases will be similar, to be succinct we only provide proofs for the case relevant to $\Fun_\cC(\cM,\ind(\cC))$, leaving the other case to the reader.

The following is a version of the (weak) enriched Yoneda lemma; see e.g.\  \cite[\S1.9]{Kelly_EnrichedCat}.
We include a proof below in our setting for the convenience of the reader.
Recall that $\ev_M$ and $\coev_M$ denote the counit and unit of the adjunction $-\lact M \dashv \iHom_\cM(M,-)$ respectively, where $\coev_M(\unit):\unit\to\iEnd_{\cM}(M)$ is the image of the identity $\id_M$ under the isomorphism in \eqref{eqn:inthom}.
\begin{lemma}[Weak enriched Yoneda lemma] \label{lem:intYoneda}
\begin{enumerate}
    \item For any left $\cC$-module functor $F \in \Fun_\cC(\cM,\ind(\cC))$ and $M\in\cM$, the evaluation
    \[\Hom(\iHom_{\cM}(M,-),F)\xrightarrow{\eta\mapsto \eta(M)\circ \coev_M(\unit)}\Hom_{\ind(\cC)}(\unit, F(M))\]
    is an isomorphism.
    \item For any right $\cC$-module functor $F \in \Fun_\cC(\rightop{\cM},\ind(\cC))$ and $M\in \cM$, the evaluation
    \[\Hom(\iHom_{\cM}(-,M),F)\xrightarrow{\eta\mapsto \eta(M)\circ \coev_M(\unit)}\Hom_{\ind(\cC)}(\unit, F(M))\]
    is an isomorphism.
    \end{enumerate}
\end{lemma}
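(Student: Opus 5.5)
We construct the inverse map explicitly and check that the two constructions are mutually inverse; only part (1) is treated, since part (2) is the same argument applied to the right $\cC$-module functor $\iHom_\cM(-,M)$ on $\rightop{\cM}$. Write $\phi(\eta) \coloneqq \eta(M)\circ\coev_M(\unit)$. Given $f:\unit\to F(M)$ in $\ind(\cC)$, define $\Psi(f)$ at $N\in\cM$ by
\[
\Psi(f)(N):\ \iHom_\cM(M,N) \cong \iHom_\cM(M,N)\otimes\unit \xrightarrow{1\otimes f} \iHom_\cM(M,N)\otimes F(M)\cong \iHom_\cM(M,N)\lact F(M) \xrightarrow{s^{-1}} F(\iHom_\cM(M,N)\lact M)\xrightarrow{F(\ev_M(N))} F(N).
\]
Here two points need care. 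First, the identification $Y\otimes F(M)\cong Y\lact F(M)$ for $Y\in\cC$ comes from the module structure of $\ind(\cC)$ over $\cC$, where $Y\lact G = G(\leftdual{Y}\otimes -)$ agrees with the tensor product of $\ind(\cC)$. Second, $s$ is the module-functor structure of $F$. Naturality of $\Psi(f)$ in $N$ follows from naturality of $\ev_M$ and of $s$. That $\Psi(f)$ is a $\cC$-module transformation follows from the commutative square for $\ev_M(X\lact N)$ displayed before the definition of internal Ext, together with the pentagon coherence of $s$.

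Next we check $\phi\circ\Psi=\id$. Evaluate $\Psi(f)(M)$ on $\coev_M(\unit):\unit\to\iHom_\cM(M,M)$. By naturality of $1\otimes f$ and of $s$, the composite becomes
\[
F\bigl(\ev_M(M)\circ(\coev_M(\unit)\lact M)\bigr)\circ s^{-1}\circ f.
\]
The triangle identity gives $\ev_M(M)\circ(\coev_M(\unit)\lact M)=\id_M$, and $s_{\unit,M}$ is the unit constraint, so the result is $f$.

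Finally we check $\Psi\circ\phi=\id$; this is the main obstacle, since it needs the full $\cC$-linearity of $\eta$. Let $\eta$ be a module transformation and $f=\phi(\eta)$. We apply the module compatibility of $\eta$ to the object $\iHom_\cM(M,N)\lact M$, with $X=\iHom_\cM(M,N)$. This says $\eta(X\lact M)$ corresponds to $X\otimes\eta(M)$ under $s$ and under $\iHom_\cM(M,X\lact M)\cong X\otimes\iHom_\cM(M,M)$. Precomposing with $X\otimes\coev_M(\unit)$ and using the first commutative square (for $\coev_M(X\otimes E)$ with $E=\unit$) identifies $s^{-1}\circ(1\otimes f)$ with $\eta(X\lact M)\circ\coev_M(X)$. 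Then naturality of $\eta$ along $\ev_M(N)$ gives
\[
\Psi(f)(N)=\eta(N)\circ\iHom_\cM(M,\ev_M(N))\circ\coev_M(X),
\]
and the other triangle identity reduces this to $\eta(N)$. The only subtlety is working in $\ind(\cC)$, where $F(N)$ need not be an object of $\cC$; but every map used is a map in $\ind(\cC)$, and tensoring with objects of $\cC$ is defined there, so the string-diagram style manipulations go through.
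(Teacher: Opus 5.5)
Your proposal is correct and takes essentially the same approach as the paper: the same inverse $x\mapsto F(\ev_M(N))\circ s^{-1}\circ(1\otimes x)$, and the same verification of $\Psi\circ\phi=\id$ via the module compatibility of $\eta$, naturality of $\eta$ along $\ev_M(N)$, and the triangle identity $\iHom_\cM(M,\ev_M(N))\circ\coev_M(\iHom_\cM(M,N))=\id$. The paper records these steps as a single commutative diagram. Your version additionally spells out the direction the paper calls immediate and checks that $\Psi(f)$ is a $\cC$-module transformation.
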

\begin{proof}
    The inverse map is given by sending $x\in \Hom_{\ind(\cC)}(\unit, F(M))$ to the natural transformation $\eta$ defined by the following composition for each $N \in \cM$
    \[\eta(N):\iHom_{\cM}(M,N)\xrightarrow{1\otimes x}\iHom_{\cM}(M,N)\otimes F(M)\xrightarrow{\sim}F(\iHom_{\cM}(M,N)\lact M)\to F(N),\]
    where the last two morphisms come from the $\cC$-module functor structure of $F$ and the evaluation of $F$ on $\ev_M(N):\iHom_{\cM}(M,N)\lact M\to N$ respectively.

    One verifies quickly that the two maps are mutually inverse. One direction is immediate. For the second direction we start from a natural transformation $\eta: \iHom(M,-)\Rightarrow F$ of $\cC$-module functors. Then we can consider the commutative diagram
    \[
    \begin{tikzcd}
    \iHom(M,N)\ar[r,"1\otimes \coev_M"] \ar[rdd,swap,bend right=30,"="] & \iHom(M,N)\otimes\iHom(M,M) \ar[r,"1\otimes \eta(M)"] \ar[d,"\sim"] & \iHom(M,N)\otimes F(M) \ar[d,"\sim"] \\
    & \iHom(M,\iHom(M,N)\lact M) \ar[r,"\eta"] \ar[d,"\protect{\iHom(M,\ev_M)}"] & F(\iHom(M,N)\lact M)\ar[d,"F(\ev_M)"] \\
    & \iHom(M,N)\ar[r,"\eta(N)"] & F(N).
    \end{tikzcd}
    \]
    The composition of morphisms from $\iHom(M,N)$ to itself is the identity morphism since $\ev_M$ and $\coev_M$ are counit and unit of an adjunction.
    The upper commutative square is the expression that the original natural transformation $\eta$ is one of module functors. The lower commutative square expresses that this $\eta$ is a natural transformation.
    The outer edge is how we define a natural transformation $\eta$ from a morphism $\unit\to F(M)$, which in turn comes from a natural transformation $\eta$ of $\cC$-module functors.
\end{proof}

\begin{corollary}\label{cor:fix}
\begin{enumerate}
    \item For every $M\in\cP\cM$, the object $\iHom_{\cM}(M,-)$ is projective in $\Fun_\cC(\cM,\ind(\cC))$. Moreover, for every $0\not=F \in \Fun_\cC(\cM,\ind(\cC))$, there exists $M\in\cP\cM$ with a non-zero morphism $\iHom_{\cM}(M,-)\to F$.
    \item For every $M\in\cP\cM$, the object $\iHom_{\cM}(-,M)$ is projective in $\Fun_\cC(\rightop{\cM},\ind(\cC))$. Moreover, for every $0\not=F \in \Fun_\cC(\rightop{\cM},\ind(\cC))$, there exists $M\in\cP\cM$ with a non-zero morphism $\iHom_{\cM}(-,M)\to F$.
\end{enumerate}
\end{corollary}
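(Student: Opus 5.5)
We prove part (i); part (ii) is the same argument with the roles of the two variables exchanged, using the right $\cC$-module structure on $\rightop{\cM}$.

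\emph{Reduction to $M=P\lact N$.} Since projectivity passes to direct summands and $\iHom_\cM(-,-)$ is additive, it suffices to treat $M=P\lact N$ with $P\in\proj(\cC)$ and $N\in\cM$. By the natural isomorphism $\iHom_\cM(X\lact N,-)\cong \iHom_\cM(N,-)\otimes\leftdual{X}$ we have
\[
\iHom_\cM(P\lact N,-)\cong \iHom_\cM(N,-)\otimes \leftdual{P}.
\]
This is an isomorphism of left $\cC$-module functors, because right tensoring commutes with the left action.

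\emph{Computing $\Hom$ out of this functor.} For $F\in\Fun_\cC(\cM,\ind(\cC))$, we use \cref{lem:intYoneda} together with the module structure:
\[
\Hom(\iHom_\cM(P\lact N,-),F)\cong \Hom_{\ind(\cC)}(\unit,F(P\lact N))\cong \Hom_{\ind(\cC)}(\unit,P\lact F(N))\cong \Hom_{\ind(\cC)}(\leftdual{P},F(N)),
\]
where the last step is duality, which extends to $\ind(\cC)$ since the action of $P$ on $\ind(\cC)$ has the corresponding adjoint. The object $\leftdual{P}$ is projective in $\cC$. Under $\ind(\cC)\cong\Fun(\proj(\cC)^{\op},\Vec)$, $\Hom_{\ind(\cC)}(\leftdual{P},-)$ is evaluation at $\leftdual{P}$, so it is exact. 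Evaluation at $N$ is exact on $\Fun_\cC(\cM,\ind(\cC))$ because kernels and cokernels are computed pointwise. Hence $F\mapsto\Hom(\iHom_\cM(M,-),F)$ is exact, so $\iHom_\cM(M,-)$ is projective.

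\emph{The generating statement.} Let $F\neq 0$ and pick $N$ with $F(N)\neq 0$ in $\ind(\cC)\cong\Fun(\proj(\cC)^{\op},\Vec)$. Then some projective $R\in\proj(\cC)$ satisfies $\Hom_{\ind(\cC)}(R,F(N))\neq 0$. Set $P\coloneqq \rightdual{R}$, which is projective with $\leftdual{P}\cong R$, and take $M=P\lact N\in\cP\cM$. By the chain of isomorphisms above,
\[
\Hom(\iHom_\cM(M,-),F)\cong\Hom_{\ind(\cC)}(R,F(N))\neq 0.
\]

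\emph{Expected obstacle.} The main point to verify is that the chain of isomorphisms is natural in $F$, so that it identifies the functor $\Hom(\iHom_\cM(M,-),-)$ with an exact functor. Each step is natural: Yoneda, the module-functor constraint $s$, and the duality adjunction. We also need to check that the duality adjunction for $\cC$ extends to the $\cC$-action on $\ind(\cC)$, where $X\lact F=F(\leftdual{X}\otimes-)$. This is where we will be careful with left versus right duals.
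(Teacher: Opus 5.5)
Your proposal is correct and is essentially the paper's proof. You reduce to $M=P\lact M'$, use \cref{lem:intYoneda} to get $\Hom(\iHom_\cM(P\lact M',-),F)\cong\Hom_{\ind(\cC)}(\leftdual{P},F(M'))$, which is exact in $F$, and for the generation statement take $M=\rightdual{Q}\lact M'$ with $\Hom_{\ind(\cC)}(Q,F(M'))\neq 0$; the only difference is that you spell out the module-constraint and duality steps $\Hom_{\ind(\cC)}(\unit,F(P\lact M'))\cong\Hom_{\ind(\cC)}(\unit,P\lact F(M'))\cong\Hom_{\ind(\cC)}(\leftdual{P},F(M'))$, which the paper compresses into a single appeal to the weak enriched Yoneda lemma.
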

\begin{proof}
	Since $M\in\cP\cM$ must be a direct summand of $P\lact M'$ for some $P \in \proj(\cC)$ and $M' \in \cM$, and projectives are closed under taking summands, it suffices to show that $\iHom_\cM(P\lact M',-)$ is projective.
    By \cref{lem:intYoneda}:
    \[\Hom(\Hom_{\cM}(P\lact M',-),F)\cong \Hom_{\ind(\cC)}(\leftdual{P},F(M')),\]
    from which it follows that $\Hom(\Hom_{\cM}(P\lact M',-),-)$ is an exact functor.
    Hence $\Hom_{\cM}(P\lact M',-)$ is projective.

   Note that $0\not=F$ simply means that $F(M')\not=0$ for some $M'\in\cM$, which in turn means that
   \[\Hom_{\ind(\cC)}(Q,F(M'))\not=0\]
   for some $Q\in\proj(\cC)$. Consequently for the second statement we can take $M=\rightdual{Q}\lact M'$ and apply \cref{lem:intYoneda}.
\end{proof}

Let $\cA$ be an abelian category.
Recall that an object $X \in \cA$ is said to be \textbf{finitely presented} if $\Hom_\cA(X,-)$ preserves all small filtered colimits in $\cA$.
The following lemma characterises the finitely presented objects in the category $\Fun_\cC(\cM,\ind(\cC))$ of $\cC$-module functors.
\begin{lemma}\label{lem:FPiffFPfunctor}
    For $F \in \Fun_\cC(\cM,\ind(\cC))$, the following are equivalent:
    \begin{enumerate}
        \item \label{item:finprensented} $F$ is finitely presented as an object in $\Fun_\cC(\cM,\ind(\cC))$.
        \item \label{item:C-projpresentfun} There exists an exact sequence in $\Fun_\cC(\cM,\ind(\cC))$:
            \[
            \iHom_\cM(N,-) \to \iHom_\cM(M,-) \to F \to 0
            \]
            with $M,N \in \cM$.
        \item \label{item:projpresentfun} There exists an exact sequence in $\Fun_\cC(\cM,\ind(\cC))$:
            \[
            \iHom_\cM(N,-) \to \iHom_\cM(M,-) \to F \to 0
            \]
            with $M,N \in \cP\cM$.
    \end{enumerate}
    For $F \in \Fun_\cC(\rightop{\cM},\ind(\cC))$, the following are equivalent:
    \begin{enumerate}
        \item $F$ is finitely presented as an object in $\Fun_\cC(\rightop{\cM},\ind(\cC))$.
        \item There exists an exact sequence in $\Fun_\cC(\rightop{\cM},\ind(\cC))$:
            \[
            \iHom_\cM(-,M) \to \iHom_\cM(-,N) \to F \to 0
            \]
            with $M,N \in \cM$.
        \item There exists an exact sequence in $\Fun_\cC(\rightop{\cM},\ind(\cC))$:
            \[
            \iHom_\cM(-,M) \to \iHom_\cM(-,N) \to F \to 0
            \]
            with $M,N \in \cP\cM$.
    \end{enumerate}
\end{lemma}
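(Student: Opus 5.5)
We prove the cycle of implications \ref{item:projpresentfun}$\implies$\ref{item:C-projpresentfun}$\implies$\ref{item:finprensented}$\implies$\ref{item:projpresentfun}; the statements for $\Fun_\cC(\rightop{\cM},\ind(\cC))$ follow by the same argument using the second parts of \cref{lem:intYoneda} and \cref{cor:fix}.

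The implication \ref{item:projpresentfun}$\implies$\ref{item:C-projpresentfun} is trivial. For \ref{item:C-projpresentfun}$\implies$\ref{item:finprensented}, recall that finitely presented objects are closed under cokernels, since filtered colimits are exact in $\ind(\cC)$ and hence (pointwise) in $\Fun_\cC(\cM,\ind(\cC))$. So it suffices to show that each $\iHom_\cM(M,-)$ is finitely presented. By \cref{lem:intYoneda}, $\Hom(\iHom_\cM(M,-),F)\cong\Hom_{\ind(\cC)}(\unit,F(M))$. The functor $F\mapsto F(M)$ commutes with filtered colimits, because these are computed pointwise. Moreover, $\unit\in\cC$ is compact in $\ind(\cC)$, as every object of $\cC$ is finitely presented in its ind-completion. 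Hence $\iHom_\cM(M,-)$ is finitely presented.

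For \ref{item:finprensented}$\implies$\ref{item:projpresentfun}, the plan is to write every $F$ as a filtered colimit of representables $\iHom_\cM(M,-)$ with $M\in\cP\cM$, and then use finite presentability twice, as in the classical argument. The key step is to construct an epimorphism from a (possibly infinite) direct sum of objects $\iHom_\cM(M_i,-)$ with $M_i\in\cP\cM$ onto an arbitrary $F$. Such an epimorphism exists by the second half of \cref{cor:fix}, applied to the image of the canonical map $\bigoplus_{(M,x)}\iHom_\cM(M,-)\to F$. This sum runs over all $M\in\cP\cM$ and all $x\in\Hom_{\ind(\cC)}(\unit,F(M))$; to keep the index set small, take $M$ from a skeleton. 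If the cokernel $F'$ of this map were non-zero, \cref{cor:fix} would give a non-zero morphism $\iHom_\cM(M,-)\to F'$. Since $\iHom_\cM(M,-)$ is projective, this morphism lifts to $F$, and by Yoneda the lift is one of the summands. Hence its composite with $F\to F'$ is zero, a contradiction.

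The infinite sum is the filtered colimit of its finite subsums, and a finite direct sum of representables is again representable, namely $\iHom_\cM(\bigoplus M_i,-)$. Finite presentability of $F$ then lets the identity of $F$ factor through a finite subsum, so we get an epimorphism $\iHom_\cM(M,-)\to F$ with $M\in\cP\cM$. Its kernel $K$ is again a module functor. Applying the same covering construction to $K$, and using that $F$ is finitely presented while $\iHom_\cM(M,-)$ is finitely presented (by the previous paragraph), the standard argument shows that $K$ is finitely generated. Concretely, write $K$ as the filtered union of the images of finite subsums; then $F$ is the filtered colimit of the quotients of $\iHom_\cM(M,-)$ by these images, and the identity of $F$ factors through one of them. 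This yields an epimorphism $\iHom_\cM(N,-)\to K$ with $N\in\cP\cM$, completing \ref{item:projpresentfun}. I expect this last step to be the main obstacle: one must check carefully that the covering from \cref{cor:fix} really yields an epimorphism (via the lifting argument above) and that the filtered-colimit bookkeeping is valid in $\Fun_\cC(\cM,\ind(\cC))$.
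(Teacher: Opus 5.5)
Your proposal is correct and follows essentially the paper's proof: (ii)$\Rightarrow$(i) via \cref{lem:intYoneda} and compactness of $\unit$ in $\ind(\cC)$, and (i)$\Rightarrow$(iii) by using \cref{cor:fix} plus lifting along projectives to show that $F$ is exhausted by images of maps from $\iHom_\cM(M,-)$ with $M\in\cP\cM$, followed by finite generation of $F$ and then of the kernel. Your direct sum over pairs $(M,x)$ just repackages the paper's filtered union of images, and indexing only over $\cP\cM$ is exactly what makes the argument land in (iii) rather than (ii).

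Two small precisions. First, finite presentability makes $\id_F$ factor through the \emph{image} of a finite subsum, not through the subsum itself. Second, in the kernel step a section $s\colon F\to\iHom_\cM(M,-)/K_J$ only splits that quotient, so to get $K=K_{J'}$ you must still use that $\iHom_\cM(M,-)$ is finitely presented. Concretely, with $p\colon\iHom_\cM(M,-)\to F$ your epimorphism, the projection $\iHom_\cM(M,-)\to\iHom_\cM(M,-)/K_J$ and $s\circ p$ agree after passing to $F$. Hence they already agree in some $\iHom_\cM(M,-)/K_{J'}$, which forces $K\subseteq K_{J'}$.
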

\begin{proof}
    Denote $\cF \coloneqq \Fun_\cC(\cM,\ind(\cC))$.
    We see that
    \begin{align*}
    \Hom_\cF(\iHom_\cM(M,-), \colim_i F_i) 
        &\cong \Hom_{\ind(\cC)}(\unit, \colim_i F_i(M)) \\
        &\cong \colim_i \Hom_{\ind(\cC)}(\unit, F_i(M)),
    \end{align*}
    where the first isomorphism follows from \cref{lem:intYoneda} and the second follows from the fact that $\unit$ is a finitely presented object in $\ind(\cC)$.
    Thus $\iHom_\cM(M,-)$ is finitely presented for any $M \in \cM$ (which includes $M\in\cP\cM$).
    It then follows that any $F \in\Fun_\cC(\cM,\ind(\cC))$ equipped with an exact sequence
    \[
    \iHom_\cM(N,-) \to \iHom_\cM(M,-) \to F \to 0
    \]
    is finitely presented.
    This shows \ref{item:C-projpresentfun} implies \ref{item:finprensented}.
    Since \ref{item:projpresentfun} implies \ref{item:C-projpresentfun} is immediate, it suffices to show \ref{item:finprensented} implies \ref{item:projpresentfun}.

Let $F\in\Fun_\cC({\cM},\ind(\cC))$ be arbitrary. We consider the set of subobjects of $F$ that are the image of morphisms 
\[\iHom_{\cM}(M,-)\to F,\quad\mbox{for}\quad M\in\cM.\]
We can observe, using additivity of $\cM$ that this set is filtered for the inclusion order. If we denote by $F'\subset F$ the colimit of this system of subobjects, then we can apply \cref{cor:fix} to $F/F'$. If the quotient is non-zero, we find some non-zero morphism from some $\iHom_{\cM}(M,-)$, which must lift to $F$, giving a contradiction with the construction of $F'$. Hence $F'=F$.

Now assume that $F$ is finitely generated (meaning $\Hom(F,-)$ commutes with filtered colimits of monomorphisms). The equality $F=F'$ then shows that $F'$ must be the image of one single (epimorphism) $\iHom_{\cM}(M,-)\to F$. If $F$ is moreover finitely presented, then the kernel of this epimorphism is finitely generated, so that we know by the same argument that it must also be a quotient of some $\iHom_{\cM}(N,-)$, concluding the proof.
\end{proof}

\begin{definition}\label{defn:finitepresentedfunc}
    We use $\Fun^{\fp}_\cC(\cM,\cC)$ to denote the full subcategory of finitely presented functors in $\Fun_\cC(\cM,\ind(\cC))$, i.e.\ functors $F$ satisfying any of the equivalent conditions in \cref{lem:FPiffFPfunctor}.
    Similarly, $\Fun^{\fp}_\cC(\rightop{\cM},\cC)$ denotes the full subcategory of finitely presented functors in $\Fun_\cC(\rightop{\cM},\ind(\cC))$.
\end{definition}
Note that a finitely presented functor $F$ necessarily takes values in $\cC$, which justifies our notation $\Fun^{\fp}_\cC(\cM,\cC)$ (instead of $\Fun^{\fp}_\cC(\cM,\ind(\cC))$).
We can now state the Yoneda embedding provided by internal Homs:
\begin{corollary}\label{cor:intHom-Yon-Emb}
    For $N,M\in\cM$, the assignment \[
	\Hom_{\cM}(N,M) \xrightarrow{f \mapsto \iHom_\cM(f,-)} \Hom(\iHom_{\cM}(M,-),\iHom_{\cM}(N,-))
	\]
	is an isomorphism, so that the (contravariant) internal Hom Yoneda embedding
    \[
    \begin{split}
        \rightop{\cM} &\to \Fun^{\fp}_\cC(\cM,\cC) \subset \Fun_\cC(\cM,\ind(\cC)) \\
        M &\mapsto \iHom_\cM(M,-)
    \end{split}   
    \]
    is a fully faithful functor.
    Similarly, we have a (covariant) fully faithful functor:
    \[
    \begin{split}
        \cM &\to \Fun^{\fp}_\cC(\rightop{\cM},\cC) \subset \Fun_\cC(\rightop{\cM},\ind(\cC)) \\
        M &\mapsto \iHom_\cM(-,M).
    \end{split}
    \]
\end{corollary}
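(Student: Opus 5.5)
We will deduce this directly from the weak enriched Yoneda lemma (\cref{lem:intYoneda}) applied with $F=\iHom_\cM(N,-)$. By \cref{lem:intYoneda}(i), evaluating at $M$ and precomposing with $\coev_M(\unit)$ gives a bijection
\[
\Hom(\iHom_\cM(M,-),\iHom_\cM(N,-)) \xrightarrow{\sim} \Hom_\cC(\unit,\iHom_\cM(N,M)).
\]
Since $\iHom_\cM(N,M)\in\cC$, the right-hand side lies in $\cC$, and the defining adjunction \eqref{eqn:inthom} identifies it with $\Hom_\cM(\unit\lact N,M)\cong\Hom_\cM(N,M)$. Composing, we obtain an isomorphism $\Hom(\iHom_\cM(M,-),\iHom_\cM(N,-))\cong\Hom_\cM(N,M)$. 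It then remains to check that its inverse is exactly $f\mapsto\iHom_\cM(f,-)$.

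For this, take $f:N\to M$. The natural transformation $\iHom_\cM(f,-)$ is a transformation of $\cC$-module functors, because the isomorphisms $\iHom_\cM(M,X\lact -)\cong X\otimes\iHom_\cM(M,-)$ are natural in the first variable. Its image under the Yoneda map is
\[
\iHom_\cM(f,M)\circ\coev_M(\unit):\unit\to\iHom_\cM(N,M).
\]
Under \eqref{eqn:inthom}, $\coev_M(\unit)$ corresponds to $\id_M$. By naturality of \eqref{eqn:inthom} in the $\cM$-variable, which is precontravariant in $M$, post-composing with $\iHom_\cM(f,M)$ corresponds to precomposing with $\unit\lact f$. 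This gives $\id_M\circ f=f$. Hence the assignment in the statement is inverse to the bijection above, and so it is itself a bijection. Functoriality (contravariance) of $M\mapsto\iHom_\cM(M,-)$ is clear. Together these give full faithfulness of the functor $\rightop{\cM}\to\Fun_\cC(\cM,\ind(\cC))$.

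The image lands in $\Fun^{\fp}_\cC(\cM,\cC)$ by \cref{lem:FPiffFPfunctor}: the sequence $0\to\iHom_\cM(M,-)\xrightarrow{=}\iHom_\cM(M,-)\to 0$ is a presentation of the required form (alternatively, the first paragraph of that lemma's proof shows directly that representables are finitely presented). The covariant statement for $M\mapsto\iHom_\cM(-,M)$ follows by the same argument using \cref{lem:intYoneda}(ii) and the naturality of \eqref{eqn:inthom} in the second variable.

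The only delicate point is the naturality bookkeeping that identifies the Yoneda bijection with $f\mapsto\iHom_\cM(f,-)$. This amounts to checking a compatibility of the adjunction isomorphism \eqref{eqn:inthom} with morphisms in the $M$-slot, and it is routine.
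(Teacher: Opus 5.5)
Your proposal is correct and follows essentially the paper's route: both apply the weak enriched Yoneda lemma with $F=\iHom_\cM(N,-)$ and reduce the claim to checking that $\iHom_\cM(f,M)\circ\coev_M(\unit)$ corresponds to $f$ under \eqref{eqn:inthom}. The paper phrases this check as commutativity of a triangle, comparing with $\iHom_\cM(N,f)\circ\coev_N(\unit)$ after applying $\Hom_\cC(X,-)$ for all $X$. You instead use naturality of \eqref{eqn:inthom} in the first variable directly at $X=\unit$, which is a slightly more economical version of the same verification.
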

\begin{proof}
    By \cref{lem:intYoneda} and definition of $\coev$, the arrows pointing to the right in
    \[\xymatrix{
\Hom(\iHom_{\cM}(M,-),\iHom_{\cM}(N,-)) \ar[rr]^{\eta\mapsto \eta(M)\circ \coev_M(\unit)} &&\Hom_{\cC}(\unit,\iHom_{\cM}(N,M))\\
&\Hom_\cM(N,M)\ar[ur]_{f\mapsto \iHom_\cM(N,f)\circ\coev_N(\unit)}\ar[lu]^{f \mapsto \iHom_\cM(f,-)}
    }\]
    are isomorphisms. To conclude that the leftward arrow is an isomorphism it thus suffices to show that the triangle is commutative. In other words, we need the equality
    \[\iHom_{\cM}(f,M)\circ\coev_M(\unit)=\iHom_{\cM}(N,f)\circ\coev_N(\unit)\]
    of morphisms $\unit\to\iHom_\cM(N,M)$, for arbitrary $f\in\Hom_{\cM}(N,M)$. To verify this, it suffices to apply $\Hom_\cC(X,-)$, for arbitrary $X\in\cC$, so that after adjunction \eqref{eqn:inthom} the two maps
    \[\Hom_\cC(X,\unit)\rightrightarrows\Hom_\cM(X\lact N,M)\]
    are $x\mapsto (\unit\lact f)\circ(x\lact N)$ and $x\mapsto (x\lact M)\circ (X\lact f)$, which are indeed the same by naturality of $x \lact - : X \lact - \to \unit \lact -$.
\end{proof}
\begin{remark}\label{rmk:C-equivariant-int-Hom-Yon-embedding}
    We will see in \cref{sec:C-action-Fun} that $\Fun^{\fp}_\cC(\cM,\cC)$ (resp.\ $\Fun^{\fp}_\cC(\rightop{\cM},\cC)$) carries a natural right (resp.\ left) $\cC$-module structure, where it will follow immediately that the internal Hom Yoneda embedding above moreover is a $\cC$-module functor.
\end{remark}

\begin{lemma} \label{lem:projFunfp}
    \begin{enumerate}
        \item The projective objects in $\Fun^{\fp}_\cC(\cM,\cC)$ are precisely the functors $\iHom_\cM(M,-)$ for some $M \in \cP\cM$.
        \item The projective objects in $\Fun^{\fp}_\cC(\rightop{\cM},\cC)$ are precisely the functors $\iHom_\cM(-,M)$ for some $M \in \cP\cM$.
    \end{enumerate}
\end{lemma}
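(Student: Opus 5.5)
We prove part (i); part (ii) follows by the same argument applied to $\Fun^{\fp}_\cC(\rightop{\cM},\cC)$. There are two inclusions to establish.

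First, every $\iHom_\cM(M,-)$ with $M\in\cP\cM$ is projective in $\Fun^{\fp}_\cC(\cM,\cC)$. By \cref{cor:fix} it is projective in the ambient category $\Fun_\cC(\cM,\ind(\cC))$. By \cref{lem:FPiffFPfunctor} it is finitely presented. Since $\Fun^{\fp}_\cC(\cM,\cC)$ is a full subcategory whose cokernels (and epimorphisms) are computed pointwise, as in the ambient category, projectivity restricts to it.

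Conversely, let $F\in\Fun^{\fp}_\cC(\cM,\cC)$ be projective. By \cref{lem:FPiffFPfunctor}\ref{item:projpresentfun} there is an epimorphism $\iHom_\cM(M,-)\twoheadrightarrow F$ with $M\in\cP\cM$. Projectivity of $F$ splits it, so $F$ is a direct summand of $\iHom_\cM(M,-)$. The corresponding idempotent $e$ of $\iHom_\cM(M,-)$ comes, by the fully faithful Yoneda embedding of \cref{cor:intHom-Yon-Emb}, from an idempotent $e'\in\End_\cM(M)^{\op}$. Since $\cM$ is finite, hence idempotent complete, we can split $M\cong M_1\oplus M_2$ with $F\cong \iHom_\cM(M_1,-)$; here we use that $\iHom_\cM(-,N)$ sends direct sums to direct sums. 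Finally, $M_1$ is a summand of $M\in\cP\cM$, so $M_1\in\cP\cM$ because $\cP\cM$ is closed under summands by definition.

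We expect the main subtlety to be the first step: checking that epimorphisms in $\Fun^{\fp}_\cC(\cM,\cC)$ agree with epimorphisms in $\Fun_\cC(\cM,\ind(\cC))$, so that projectivity transfers. This holds because $\Fun^{\fp}_\cC(\cM,\cC)$ is closed under cokernels in the ambient category: a cokernel of finitely presented objects is again finitely presented. An epimorphism in the subcategory therefore has zero cokernel in the ambient category, and is an ambient epimorphism. The remaining steps are formal.
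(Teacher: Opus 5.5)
Your proposal is correct and follows essentially the same route as the paper. Projectivity of $\iHom_\cM(M,-)$ for $M\in\cP\cM$ comes from \cref{cor:fix}; conversely, you split an epimorphism $\iHom_\cM(M,-)\twoheadrightarrow F$ supplied by \cref{lem:FPiffFPfunctor} and transport the resulting idempotent back to $\End_\cM(M)$, which gives $F\cong\iHom_\cM(M_1,-)$ with $M_1$ a summand of $M$.

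Your extra check that epimorphisms in $\Fun^{\fp}_\cC(\cM,\cC)$ are epimorphisms in the ambient category (via closure under cokernels) fills in a step the paper leaves implicit. Invoking the fully faithful embedding of \cref{cor:intHom-Yon-Emb}, rather than \cref{lem:intYoneda} directly, also makes it clear that idempotents correspond to idempotents.
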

\begin{proof}
That that $\iHom_\cM(M',-)$ are projective for all $M' \in \cP\cM$ is observed in \cref{cor:fix}.
	Now let $F \in \Fun^{\fp}_\cC(\cM,\cC)$ be a projective object. 
	Since $F$ is finitely presented, by \cref{lem:FPiffFPfunctor} we may assume that we have some epimorphism $\pi: \iHom_\cM(M,-) \Rightarrow F$ with $M \in \cP\cM$, which splits by the projectivity assumption on $F$.
	With $\iota$ denoting the splitting monomorphism, we get the following idempotent: 
	\[
		\eta\coloneqq \iota \circ \pi: \iHom_\cM(M,-) \Rightarrow \iHom_\cM(M,-).
	\]
	By \cref{lem:intYoneda}, this corresponds to an idempotent  
	\[
		e \in \Hom_\cM(M,M) \cong \Hom_\cC(\unit,\iHom_\cM(M,M)).
	\]
	It follows that $F\cong \iHom_\cM(E,-)$ for $E \coloneqq \mathrm{im}(e)$ a summand of $M$, which is in $\cP\cM$ as required.
\end{proof}

\begin{proposition}
    $\Fun^{\fp}_\cC(\cM,\cC)$ and $\Fun^{\fp}_\cC(\rightop{\cM},\cC)$ are abelian categories which are Hom-finite and have enough projectives.
\end{proposition}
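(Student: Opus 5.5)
The plan is to identify $\Fun^{\fp}_\cC(\cM,\cC)$ with a classical category of finitely presented functors on $\cP\cM$, and read off the three properties from there; the right module case $\Fun^{\fp}_\cC(\rightop{\cM},\cC)$ is handled by the same argument, using the second parts of the earlier lemmas.

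\textbf{Abelianness.} The category $\Fun_\cC(\cM,\ind(\cC))$ is abelian with pointwise kernels and cokernels. The finitely presented objects of an abelian category are closed under cokernels and extensions, so it suffices to show they are closed under kernels. By \cref{lem:FPiffFPfunctor}\ref{item:projpresentfun}, every $F$ has a presentation $\iHom_\cM(N,-)\to\iHom_\cM(M,-)\to F\to 0$ with $M,N\in\cP\cM$. By \cref{cor:intHom-Yon-Emb}, maps between such representables are of the form $\iHom_\cM(f,-)$ for a unique $f:M\to N$ in $\cP\cM$.

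I would then show that the full subcategory of projectives $\{\iHom_\cM(M,-)\mid M\in\cP\cM\}$, which is equivalent to $(\cP\cM)^{\op}$, has weak kernels, since $\cP\cM$ has weak cokernels by \cref{cor:weakcokernels}. Concretely, for $f:M\to N$, let $c:N\to C$ be a left $\cP\cM$-approximation of $\coker f$ composed with $N\twoheadrightarrow\coker f$. Then
\[\iHom_\cM(C,-)\to\iHom_\cM(N,-)\to\iHom_\cM(M,-)\]
should be exact at the middle term after evaluating at $\Hom_{\ind(\cC)}(Q,-)$ for projectives $Q$. The point to check is that a morphism $\iHom_\cM(L,-)\to\iHom_\cM(N,-)$ with $L\in\cP\cM$ killing $\iHom(f,-)$ comes from $g:N\to L$ with $gf=0$, and hence factors through $c$. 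Equivalently, I would verify the weak-kernel property via Yoneda: test against projectives $\iHom_\cM(L,-)$, which by \cref{lem:projFunfp} and the covering property of \cref{cor:fix} detect everything. The standard argument (Freyd/Auslander: $\fp$ functors over a category with weak kernels form an abelian category) then shows that kernels of maps between finitely presented objects are finitely presented.

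\textbf{Enough projectives and Hom-finiteness.} Every finitely presented $F$ is a quotient of $\iHom_\cM(M,-)$ with $M\in\cP\cM$, and this is projective by \cref{cor:fix}/\cref{lem:projFunfp}. That gives enough projectives. For Hom-finiteness, $\Hom(F,G)$ embeds into $\Hom(\iHom_\cM(M,-),G)\cong\Hom_{\cC}(\unit,G(M))$ by \cref{lem:intYoneda}. This space is finite dimensional because $G(M)\in\cC$ and $\cC$ is Hom-finite; recall that finitely presented functors take values in $\cC$, being cokernels of maps between $\iHom$'s, which are $\cC$-valued.

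\textbf{Main obstacle.} The crux is the closure of finitely presented objects under kernels, i.e.\ producing a weak kernel inside the projectives. The delicate point is that the test objects must be all finitely presented functors, not merely $\vec$-valued ones, so I would carefully reduce exactness to the representables from $\cP\cM$ using \cref{cor:fix}.
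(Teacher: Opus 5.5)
Your proposal is correct, but your kernel-closure step differs from the paper's.

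The paper avoids approximations and weak kernels altogether. It uses criterion \ref{item:C-projpresentfun} of \cref{lem:FPiffFPfunctor}, which allows presentations by $\iHom_\cM(M,-)$ with \emph{arbitrary} $M\in\cM$. Because $\iHom_\cM(-,X)$ sends cokernels in $\cM$ to kernels, this larger class of representables is closed under genuine kernels and pullbacks. So the paper first lifts $\phi\colon F_1\to F_2$ to a map of $\cP\cM$-presentations, which is where projectivity enters. It then forms two pushouts $M_0,N_0$ in $\cM$ and writes down an explicit presentation $\iHom_\cM(N_0,-)\to\iHom_\cM(M_0,-)\to\ker\phi\to 0$.

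You instead stay inside the projective representables with $M\in\cP\cM$. You build weak kernels from left $\cP\cM$-approximations of $\coker f$. You then upgrade them to genuine exactness in $\Fun_\cC(\cM,\ind(\cC))$ using projectivity plus the covering property of \cref{cor:fix}. Your pointwise test also works, since $\Hom_{\ind(\cC)}(Q,\iHom_\cM(N,X))\cong\Hom_\cM(N,\rightdual{Q}\lact X)$ with $\rightdual{Q}\lact X\in\cP\cM$. Finally you run the Freyd--Auslander coherence argument internally, which transfers verbatim once that exactness is in hand.

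Compared with the paper, your route costs an extra detection step and an appeal to the standard argument. In return, it exhibits the classical mechanism behind the later identification in \cref{thm:linearisingCfunctor}, namely weak cokernels in $\cP\cM$ (\cref{cor:weakcokernels}).

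You could also get your weak kernel without approximations. By left exactness, $\ker\iHom_\cM(f,-)=\iHom_\cM(\coker f,-)$, which is already finitely presented by \cref{lem:FPiffFPfunctor}. It is therefore a quotient of some $\iHom_\cM(C,-)$ with $C\in\cP\cM$, and this is essentially the observation the paper's argument rests on.

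Your arguments for enough projectives and Hom-finiteness agree with the paper's; you spell out Hom-finiteness in more detail than the paper does.
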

\begin{proof}
It is a general fact that the class of finitely presented objects is closed under taking finite direct sums and cokernels. One can also produce proofs using the equivalent characterisation in \cref{lem:FPiffFPfunctor}, similar to the case of kernels below.

%    Throughout this proof we use the characterisation of finitely presented functors given by \cref{lem:FPiffFPfunctor}.
 %   
%    Then 
%    \[
%    \iHom_\cM(N_1\oplus N_2,-) \to \iHom_\cM(M_1 \oplus M_2,-) \to F_1\oplus F_2 \to 0,
 %   \]
 %   so $F_1\oplus F_2 \in \Fun^{\fp}_\cC(\cM,\cC)$.
  
    Let $F_1,F_2 \in \Fun^{\fp}_\cC(\cM,\cC)$ with presentation
    \[
    \iHom_\cM(N_i,-) \xrightarrow{\iHom_\cM(p_i,-)} \iHom_\cM(M_i,-) \to F_i \to 0,
    \]
    where $M_i,N_i \in \cP\cM$ via the characterisation in \cref{lem:FPiffFPfunctor}\ref{item:projpresentfun}, and application of \cref{cor:intHom-Yon-Emb}.
    Consider a $\cC$-module natural transformation $\phi: F_1 \Rightarrow F_2$. We can extend it to a morphism of exact sequences by projectivity of $\iHom_\cM(M_i,-)$ and $\iHom_\cM(N_i,-)$ (\cref{lem:projFunfp}):
    \[\begin{tikzcd}
        \iHom_\cM(N_1,-) \ar[r]\ar[d,"\protect{\iHom_\cM(f_N,-)}"] & \iHom_\cM(M_1,-) \ar[r]\ar[d,"\protect{\iHom_\cM(f_M,-)}"] & F_1 \ar[r]\ar[d,"\phi"] & 0 \\
       \iHom_\cM(N_2,-) \ar[r] & \iHom_\cM(M_2,-) \ar[r] & F_2 \ar[r] & 0.
    \end{tikzcd}\]

   % It follows that we can equip $\coker(\phi)$ with presentation
  %  \[
  %  \iHom_\cM(N_2\oplus M_1,-) \to \iHom_\cM(M_2,-) \to \coker(\phi) \to 0.
 %   \]

   Consider the following exact sequences with $M_0, N_0 \in \cM$ defined by taking cokernels in $\cM$:
    \[
    M_2 \xrightarrow{\begin{bsmallmatrix}f_M \\ p_2 \end{bsmallmatrix}} M_1\oplus N_2 \xrightarrow{\begin{bsmallmatrix}\pi_1 & \pi_2 \end{bsmallmatrix}} M_0 \to 0; \qquad
    M_1 \xrightarrow{\begin{bsmallmatrix}\pi_1 \\ p_1 \end{bsmallmatrix}} M_0\oplus N_1 \to N_0 \to 0.
    \]
    This gives rise to the following morphism of exact sequences
    \[\begin{tikzcd}
        \iHom_\cM(N_0,-) \ar[r]\ar[d] & \iHom_\cM(M_0,-) \ar[r]\ar[d] & \ker(\phi) \ar[r]\ar[d] & 0 \\
        \iHom_\cM(N_1,-) \ar[r] & \iHom_\cM(M_1,-) \ar[r] & F_1 \ar[r] & 0 
    \end{tikzcd},\]
    hence it follows that $\ker(\phi) \in \Fun_\cC^{\fp}(\cM,\cC)$ by \ref{item:C-projpresentfun} in \cref{lem:FPiffFPfunctor}.
    All in all, we have shown that $\Fun_\cC^{\fp}(\cM,\cC)$ is closed under taking finite direct sums, cokernels and kernels, hence is abelian.
    %It follows from the characterisation that $\Fun^{\fp}_\cC(\cM,\cC)$ is closed under taking direct sums, cokernels and kernels, and hence is abelian; see e.g.\ the proof of \cite[Lemma 2.1.6]{Krause_HomThRep}, replacing Hom with $\iHom_\cM$.
    
    The characterisation also guarantees that $\Fun^{\fp}_\cC(\cM,\cC)$ is Hom-finite, and since $\iHom_\cM(M,-)$ is projective for all $M\in \cP\cM$ by \cref{lem:projFunfp}, it also follows that $\Fun^{\fp}_\cC(\cM,\cC)$ has enough projectives.
\end{proof}

\begin{definition}\label{defn:FunfpPM}
We let $\Fun^{\fp}(\cP\cM,\vec)$ denote the subcategory of finitely presented objects in the category of ($\Bbbk$-linear additive) functors from $\cP\cM$ into $\Vec$.
Equivalently, $F \in \Fun^{\fp}(\cP\cM,\vec)$ if and only if there exists an exact sequence
\[
\Hom_\cM(N,-)|_{\cP\cM} \to \Hom_\cM(M,-)|_{\cP\cM} \to F \to 0
\]
in the category of functors from $\cP\cM$ to $\Vec$, with $M,N \in \cP\cM$; see e.g.\ \cite[Proposition 11.1.9]{Krause_HomThRep}.
Similarly, $\Fun^{\fp}(\cP\cM^{\op},\vec)$ denotes the subcategory of finitely presented objects in the category of ($\Bbbk$-linear additive) contravariant functors from $\cP\cM$ into $\Vec$.
\end{definition}
We note that $\Fun^{\fp}(\cP\cM, \vec)$ and $\Fun^{\fp}(\cP\cM^{\op},\vec)$ are abelian categories since $\cP\cM$ has weak cokernels and weak kernels by \cref{cor:weakcokernels}; see e.g.\ \cite[Lemma 2.1.6]{Krause_HomThRep}.
This is also a consequence of \cref{thm:linearisingCfunctor} below.

\begin{theorem} \label{thm:linearisingCfunctor}
    \begin{enumerate}
        \item The functor
        \[
        \Fun^{\fp}_\cC(\cM,\cC) \xrightarrow{\sim} \Fun^{\fp}(\cP\cM,\vec)
        \]
        sending $F \mapsto \Hom_\cC(\unit, F(-)|_{\cP\cM})$ is an equivalence.
        \item The functor
        \[
        \Fun^{\fp}_\cC(\rightop{\cM},\cC) \xrightarrow{\sim} \Fun^{\fp}(\cP\cM^{\op},\vec)
        \]
        sending $F \mapsto \Hom_\cC(\unit, F(-)|_{\cP\cM})$ is an equivalence.
    \end{enumerate}
\end{theorem}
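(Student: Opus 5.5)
The plan is to compare projective objects on both sides and then extend to finitely presented objects by right exactness. Write $\Phi(F)\coloneqq \Hom_\cC(\unit, F(-)|_{\cP\cM})$; I only treat part (i), since (ii) is the same argument with $\iHom_\cM(-,M)$ in place of $\iHom_\cM(M,-)$.

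\textbf{Step 1: $\Phi$ is well defined, exact, and sends representables to representables.} $\Phi$ is $\Bbbk$-linear, and it is exact because (co)kernels in $\Fun_\cC(\cM,\ind(\cC))$ are computed pointwise. For any $M\in\cM$ and $N\in\cP\cM$,
\[
\Phi(\iHom_\cM(M,-))(N)=\Hom_\cC(\unit,\iHom_\cM(M,N))\cong\Hom_\cM(M,N),
\]
naturally in $N$, by \eqref{eqn:inthom}. Hence $\Phi(\iHom_\cM(M,-))\cong \Hom_\cM(M,-)|_{\cP\cM}$. Combining exactness with \cref{lem:FPiffFPfunctor}\ref{item:projpresentfun} and \cref{defn:FunfpPM}, every $F\in\Fun^{\fp}_\cC(\cM,\cC)$ is sent to an object of $\Fun^{\fp}(\cP\cM,\vec)$.

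\textbf{Step 2: $\Phi$ is fully faithful on projectives.} By \cref{lem:projFunfp}, the projectives of $\Fun^{\fp}_\cC(\cM,\cC)$ are the functors $\iHom_\cM(M,-)$ with $M\in\cP\cM$. By ordinary Yoneda, the projectives of $\Fun^{\fp}(\cP\cM,\vec)$ are the functors $\Hom_\cM(M,-)|_{\cP\cM}$. For $M,N\in\cP\cM$ I then need
\[
\Hom(\iHom_\cM(M,-),\iHom_\cM(N,-)) \to \Hom(\Hom_\cM(M,-)|_{\cP\cM},\Hom_\cM(N,-)|_{\cP\cM})
\]
to be bijective. Both sides identify with $\Hom_\cM(N,M)$: the left by \cref{cor:intHom-Yon-Emb}, the right by Yoneda in the full subcategory $\cP\cM$. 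It remains to check that $\Phi$ is compatible with these identifications. Under the isomorphism \eqref{eqn:inthom}, $\Phi(\iHom_\cM(f,-))$ is precomposition with $f$. This is the same naturality check as in the proof of \cref{cor:intHom-Yon-Emb}, and I expect this compatibility to be the main (though routine) point to verify carefully. So $\Phi$ restricts to an equivalence between the two categories of projectives; essential surjectivity there also uses idempotent completeness, since both categories of projectives are closed under summands.

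\textbf{Step 3: extension to finitely presented objects.} Both categories are abelian, have enough projectives, and every object has a projective presentation. The first fact is the preceding proposition; the second is \cref{cor:weakcokernels} together with \cref{defn:FunfpPM}. Full faithfulness then follows in the standard way. Given presentations $P_1\to P_0\to F\to 0$ and $P'_1\to P'_0\to F'\to 0$, any morphism $\Phi F\to\Phi F'$ lifts to a morphism of the image presentations, since $\Phi P_i$ is projective and $\Phi$ is exact. By Step 2 this lift comes from a morphism of presentations, which then induces a preimage $F\to F'$. Faithfulness follows similarly: a morphism $F\to F'$ that dies under $\Phi$ has a lift $P_0\to P'_0$ whose image factors through $\Phi P'_1$, and by Step 2 the lift itself factors through $P'_1$, so the morphism is zero. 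For essential surjectivity, take $G$ with presentation $\Hom_\cM(N,-)|_{\cP\cM}\xrightarrow{(-\circ p)}\Hom_\cM(M,-)|_{\cP\cM}\to G\to 0$, where $p\colon M\to N$ and $M,N\in\cP\cM$. Then $G\cong\Phi(\coker \iHom_\cM(p,-))$ by exactness of $\Phi$.
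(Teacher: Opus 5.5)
\textbf{Gap in Step 1: the exactness of $\Phi$.} You justify exactness only by noting that (co)kernels in $\Fun_\cC(\cM,\ind(\cC))$ are computed pointwise. That gives exactness of $F'(N)\to F(N)\to F''(N)$ in $\cC$. You then apply $\Hom_\cC(\unit,-)$, which is only left exact, because $\unit$ is projective only when $\cC$ is semisimple. So your argument as written proves left exactness, but you use right exactness throughout:
\begin{enumerate}
\item in Step 1, to conclude that $\Phi F$ is finitely presented;
\item in Step 3, to know that $\Phi P_1\to\Phi P_0\to\Phi F\to 0$ is a presentation when you lift morphisms and prove faithfulness;
\item for essential surjectivity, to get $G\cong\Phi(\coker\iHom_\cM(p,-))$.
\end{enumerate}
The functor $F\mapsto\Hom_\cC(\unit,F(-))$ on all of $\cM$ is in general not right exact. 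For instance, take $\cM=\cC=\rep(\mathbb{Z}/p)$ in characteristic $p$, evaluate at $\unit$, and use the epimorphism $\iHom_\cC(Q,-)\twoheadrightarrow\coker\iHom_\cC(\pi,-)$, where $\pi\colon Q\twoheadrightarrow\unit$ is the projective cover. What makes $\Phi$ exact is the restriction to $\cP\cM$ together with the $\cC$-module structure of $F$, and your argument uses neither.

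\textbf{The fix.} For $N\in\cP\cM$, \cref{lem:intYoneda} gives $\Hom_\cC(\unit,F(N))\cong\Hom(\iHom_\cM(N,-),F)$ naturally in $F$, and $\iHom_\cM(N,-)$ is projective by \cref{cor:fix}. Concretely, for $N=P\lact N'$ with $P\in\proj(\cC)$,
\[
\Hom_\cC(\unit,F(P\lact N'))\cong\Hom_\cC(\unit,P\otimes F(N'))\cong\Hom_\cC(\leftdual{P},F(N')),
\]
which is exact in $F$ because $\leftdual{P}$ is projective. With this step inserted, your Steps 2 and 3 go through as written.

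\textbf{Comparison with the paper.} The paper never meets this issue. It writes $\Phi$ as a composite of the classical equivalence $\cF\simeq\Fun^{\fp}(\proj(\cF)^{\op},\vec)$, $F\mapsto\Hom_\cF(-,F)|_{\proj(\cF)}$, which is automatically exact as a Hom out of projectives, with the identification $\proj(\cF)^{\op}\simeq\cP\cM$ from \cref{lem:projFunfp} and \cref{cor:intHom-Yon-Emb}. It then uses \cref{lem:intYoneda} to rewrite $\Hom_\cF(\iHom_\cM(N,-),F)$ as $\Hom_\cC(\unit,F(N))$. Your Step 3 is a hand proof of that classical equivalence, so once repaired your argument is the paper's proof unpacked.
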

\begin{proof}
    Let $\cF \coloneqq \Fun^{\fp}_\cC(\cM,\cC)$.
    Then we have an equivalence of abelian categories 
    \[
    \cF \cong \Fun^{\fp}(\proj(\cF)^{\op}, \vec)
    \]
    sending $F \mapsto \Hom_\cF(-,F)|_{\proj(\cF)}$; see \cite[Lemma 2.7]{Krause_defcat} or \cite[Lemma 18.1.13]{Prest_Purity-Spectra-local} for the dual version in terms of injectives.
    Recall that
    \[
    \proj(\cF) = \{\iHom_\cM(M,-) \mid M \in \cP\cM\}
    \]
    by \cref{lem:projFunfp}.
    Combining this with \cref{lem:intYoneda}, we get the equivalence
    \[
    \Fun^{\fp}(\proj(\cF)^{\op}, \vec) \cong \Fun^{\fp}(\cP\cM, \vec)
    \]
    sending $\Hom_\cF(-,F)|_{\proj(\cF)} \mapsto \Hom_\cC(\unit, F(-)|_{\cP\cM})$.
    This shows that $\Fun^{\fp}_\cC(\cM,\cC) \xrightarrow{\sim} \Fun^{\fp}(\cP\cM,\vec)$ as defined is an equivalence.
\end{proof}

\subsection{\texorpdfstring{$\cC$}{C}-projective dimension of functor categories} \label{sec:C-action-Fun}
\begin{definition}
    Define a right $\cC$-module category structure on $\Fun_\cC(\cM,\ind(\cC))$ as follows.
    For each $X \in \cC$ and $F \in \Fun_\cC(\cM,\ind(\cC))$, $F \ract X$ is the functor defined by
    \begin{equation}\label{eqn:C-actionFuntorCat}
        (F \ract X)(M) \coloneqq F(M)\otimes X.
    \end{equation}
    Similarly, $\Fun_\cC(\rightop{\cM},\ind(\cC))$ is defined as a left $\cC$-module category via
    \[
    (X\lact F)(M) \coloneqq X\otimes F(M).
    \]
\end{definition}
Note that $(F \ract -)(M) = F(M) \otimes -$ is exact for all $M$ 
%Note that $X \otimes -$ and $- \otimes X$ is still exact in $\ind(\cC)$ for all $X \in \ind(\cC)$.
and so is $- \ract X = -\otimes X$, hence the resulting bifunctor $-\ract - : \Fun_\cC(\cM,\ind(\cC)) \times \cC \to \Fun_\cC(\cM,\ind(\cC))$ is exact in both variables.
Since 
    \[
    \iHom_\cM(M,-) \ract X =\iHom_\cM(M,-)\otimes X \cong \iHom_\cM(\rightdual{X}\lact M,-), 
    \]
it follows from \cref{lem:FPiffFPfunctor} that $\Fun^{\fp}_\cC(\cM,\cC)$ is closed under the right $\cC$-action defined in \eqref{eqn:C-actionFuntorCat}, hence is a right $\cC$-module (sub)category.
The same can be said for $\Fun_\cC(\rightop{\cM},\ind(\cC))$ and $\Fun_\cC^{\fp}(\rightop{\cM},\cC)$.

The corresponding right $\cC$-module category structure on $\Fun^{\fp}(\cP\cM, \vec)$ under the equivalence in \cref{thm:linearisingCfunctor} can be defined as follows.
The bifunctor $- \ract -: \Fun^{\fp}(\cP\cM, \vec) \times \cC \to \Fun^{\fp}(\cP\cM, \vec)$ is given by
\begin{equation} \label{eqn:C-actionFunPM}
G\ract X \coloneqq G(\rightdual{\rightdual{X}}\lact -)
\end{equation}
As before, $-\ract X$ sends finitely presented functors to finitely presented functors, since 
\[
\Hom_\cM(M,-) \ract X = \Hom_\cM(M, \rightdual{\rightdual{X}}\lact -) \cong \Hom_\cM(\rightdual{X} \lact M, -),
\]
and $\cP\cM$ is closed under the $\cC$-action.
The fact that this is compatible with the equivalence in \cref{thm:linearisingCfunctor} follows from a direct computation, which we leave to the reader.
Similarly, we have a left $\cC$-module category structure on $\Fun^{\fp}(\cP\cM^{\op}, \vec)$ under the equivalence \cref{thm:linearisingCfunctor} given by
\begin{equation}\label{eqn:rightopPMaction}
X\lact G \coloneqq G(\leftdual{X}\lact -)
\end{equation}

\begin{example}
    Let $\cM$ be a finite left $\cC$-module category which is moreover exact (see \cref{defn:modulecat}).
    In this case, we have that $\cP\cM = \proj(\cM) = \inj(\cM)$.
    Then we have the following equivalences of right $\cC$-module categories:
    \[
    \rightop{\cM} \cong \Fun^{\fp}(\inj(\cM),\vec) = \Fun^{\fp}(\cP\cM,\vec) \cong \Fun^{\fp}_\cC(\cM,\cC);
    \]
    note that the first equivalence of $\cC$-module categories is given by sending $M \mapsto \Hom_\cM(M,-)|_{\inj(\cM)}$, which is known to be an equivalence of abelian categories $\cM^{\op} \cong \Fun^{\fp}(\inj(\cM),\vec)$ (see \cite[Lemma 18.1.13]{Prest_Purity-Spectra-local}).
    In particular, when $S=M \in \cM$ is simple, then $\Hom_\cM(S,-)|_{\inj(\cM)}$ is a simple functor. 
    Indeed, the only indecomposable injective(-projective) $I \in \inj(\cM)$ such that $\Hom_\cM(S,I) \neq 0$ is the injective envelope of $S$, which is unique up to isomorphism.

    Similarly, we have the following equivalences of left $\cC$-module categories:
    \[
    \cM \cong \Fun^{\fp}(\proj(\cM)^{\op},\vec) = \Fun^{\fp}(\cP\cM^{\op},\vec) \cong \Fun^{\fp}_\cC(\rightop{\cM},\cC).
    \]
\end{example}

\begin{example}
    Suppose that $\cM$ is $\cC$-rep-finite, so that $\cP\cM$ has only finitely many indecomposable objects.
    Then $\Fun^{\fp}_\cC(\rightop{\cM},\cC) \cong \Fun^{\fp}(\cP\cM^{\op},\vec)$ is a finite $\cC$-module category; $\cP\cM$ having only finitely many indecomposables ensures that there are only finitely many simple functors and every functor has finite length.
    These functor categories will be part of a $\cC$-equivariant Auslander(--Solberg) correspondence that we study in Section \ref{sec:equiv-Aus-cor}.
\end{example}

As in the previous subsection, we will only provide proofs of the results relevant to $\Fun_\cC(\cM,\ind(\cC))$, and omit the proofs for $\Fun_\cC(\rightop{\cM},\ind(\cC))$.

\begin{proposition}\label{prop:C-projrepresentable}
    \begin{enumerate}
        \item The $\cC$-projective objects in $\Fun^{\fp}_{\cC}(\cM,\cC)$ are precisely the functors $\iHom_{\cM}(M,-)$, for $M\in\cM$.
        Equivalently, the $\cC$-projective objects in $\Fun^{\fp}(\cP\cM,\vec)$ are precisely the functors $\Hom_{\cM}(M,-)|_{\cP\cM}$, for $M\in\cM$.
        \item The $\cC$-projective objects in $\Fun^{\fp}_{\cC}(\rightop{\cM},\cC)$ are precisely the functors $\iHom_{\cM}(-,M)$, for $M\in\cM$.
        Equivalently, the $\cC$-projective objects in $\Fun^{\fp}(\cP\cM^{\op},\vec)$ are precisely the functors $\Hom_{\cM}(-,M)|_{\cP\cM}$, for $M\in\cM$.
    \end{enumerate}
\end{proposition}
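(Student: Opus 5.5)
We need to show: $\cC$-projectives in $\cF:=\Fun^{\fp}_\cC(\cM,\cC)$ (a right $\cC$-module category) are exactly the representables $\iHom_\cM(M,-)$, $M\in\cM$. The second statement is analogous, and the reformulations via $\Fun^{\fp}(\cP\cM,\vec)$ follow from \cref{thm:linearisingCfunctor} together with \cref{lem:intYoneda}, which gives $\Hom_\cC(\unit,\iHom_\cM(M,N))\cong\Hom_\cM(M,N)$.

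\emph{Representables are $\cC$-projective.} The right action satisfies $\iHom_\cM(M,-)\ract X\cong\iHom_\cM(\rightdual{X}\lact M,-)$. Take a non-zero $P\in\proj(\cC)$. Then $\rightdual{P}$ is a non-zero projective, so $\rightdual{P}\lact M\in\cP\cM$. By \cref{lem:projFunfp}, $\iHom_\cM(M,-)\ract P$ is projective in $\cF$. By the definition of $\cC$-projective (for right module categories, as in \cref{lem:Cproj}), $\iHom_\cM(M,-)$ is $\cC$-projective.

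\emph{Conversely, every $\cC$-projective $F\in\cF$ is representable.} By \cref{lem:FPiffFPfunctor}, choose a presentation
\[
\iHom_\cM(N,-)\xrightarrow{\iHom_\cM(f,-)}\iHom_\cM(M,-)\to F\to 0,
\]
where the first map comes from some $f\colon M\to N$ by \cref{cor:intHom-Yon-Emb}. Let $K:=\ker(f)\in\cM$. Since $\iHom_\cM(-,L)$ sends colimits to limits, $\iHom_\cM(-,-)$ is left exact in the first variable. Applying it to $0\to K\to M\to N$ gives the exact sequence
\[
0\to\iHom_\cM(\coker f,-)\to\iHom_\cM(N,-)\to\iHom_\cM(M,-)\to\iHom_\cM(K,-).
\]
Hence $F$ embeds into $\iHom_\cM(K,-)$, with cokernel $G$ say. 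So we get a short exact sequence $0\to F\to\iHom_\cM(K,-)\to G\to0$ in $\cF$. Here the sequence alone does not give representability, so I would use the $\cC$-projectivity of $F$ instead.

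The cleanest route I would try is an analogue of \cref{prop:Cprojresolving}. The $\cC$-projectives in $\cF$ are cogenerated by the projectives $\iHom_\cM(E,-)$, $E\in\cP\cM$: apply $F\ract-$ to $0\to\unit\to P\otimes\leftdual{P}\to C\to0$. This yields $0\to F\to F\ract(P\otimes\leftdual{P})\to F\ract C\to 0$, with middle term projective, hence equal to $\iHom_\cM(E,-)$ for some $E\in\cP\cM$ by \cref{lem:projFunfp}. The third term $F\ract C$ is again $\cC$-projective by \cref{lem:CprojclosedunderC}. Repeating the construction on $F\ract C$ embeds it into some $\iHom_\cM(E',-)$. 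This gives an exact sequence $0\to F\to\iHom_\cM(E,-)\xrightarrow{\alpha}\iHom_\cM(E',-)$. By \cref{cor:intHom-Yon-Emb}, $\alpha=\iHom_\cM(g,-)$ for some $g\colon E'\to E$. By the left exactness above, $\ker\alpha\cong\iHom_\cM(\coker g,-)$, so $F\cong\iHom_\cM(\coker g,-)$ is representable.

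The main obstacle is checking that the morphism into $\iHom_\cM(E',-)$ is genuinely a composite $F\ract(P\otimes\leftdual P)\twoheadrightarrow F\ract C\hookrightarrow\iHom_\cM(E',-)$. This is needed so that the kernel of $\alpha$ is exactly $F$, and it holds by construction. Beyond that, one must verify that kernels in $\cF$ are computed pointwise, so that the left exactness of $\iHom_\cM(-,L)$ indeed identifies the functor kernel. Both points follow from the definitions, so the argument is otherwise formal.
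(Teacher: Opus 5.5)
Your proof is correct. The first half (representables are $\cC$-projective because $\iHom_\cM(M,-)\ract P\cong\iHom_\cM(\rightdual{P}\lact M,-)$ with $\rightdual{P}\lact M\in\cP\cM$) is exactly the paper's argument. Your converse, however, takes a genuinely different route.

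The paper starts from a projective presentation of $F$ induced by $f\colon M\to N$ and forms the canonical comparison map $\eta\colon F\to\iHom_\cM(K,-)$, where $K=\ker f$. It then uses $\cC$-projectivity: after applying $-\ract P$, every term of the presentation is projective, so the presentation splits. By Yoneda, $0\to Q\lact K\to Q\lact M\to Q\lact N$ (with $Q=\rightdual{P}$) is then split exact. Hence $\eta\ract P$ is an isomorphism, and so is $\eta$, because $-\ract P$ reflects isomorphisms.

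You instead transport the cogeneration argument of \cref{prop:Cprojresolving} to $\cF$. This gives a copresentation $0\to F\to\iHom_\cM(E,-)\xrightarrow{\iHom_\cM(g,-)}\iHom_\cM(E',-)$ by projectives. Left exactness of $\iHom_\cM(-,L)$ then identifies the kernel with $\iHom_\cM(\coker g,-)$; this is the same mechanism the paper uses in \cref{thm:functor-cat-Cprojdim}.

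What your route buys: it avoids the splitting and reflection-of-isomorphisms steps. It also yields a clean intrinsic description: the functors $\iHom_\cM(M,-)$ are exactly the kernels of morphisms between projectives of $\cF$. Conversely, a projective presentation of $M$ in $\cM$ exhibits $\iHom_\cM(M,-)$ as such a kernel. What the paper's route buys: it reads off the representing object $K=\ker f$ from an arbitrary presentation of $F$. It also isolates exactly where $\cC$-projectivity is needed, namely to make the canonical map $F\to\iHom_\cM(K,-)$ an isomorphism.

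One correction to the exploratory aside you abandon. Left exactness of the contravariant functor $\iHom_\cM(-,L)$ only gives exactness of $0\to\iHom_\cM(\coker f,-)\to\iHom_\cM(N,-)\to\iHom_\cM(M,-)$. Continuing to $\iHom_\cM(K,-)$ is in general not exact at $\iHom_\cM(M,-)$, so $F$ need not embed into $\iHom_\cM(K,-)$. For example, take $\cC=\vec$, $\cM=\mod_{\Bbbk[x]/\langle x^2\rangle}$ and $f$ the inclusion of the simple module into the regular module. Then $K=0$ but $F(\Bbbk)\cong\Bbbk\neq 0$. Your final argument does not use this claim, so the proof stands.
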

\begin{proof}
Throughout this proof we consider the model $\Fun^{\fp}(\cP\cM,\vec) \cong \Fun^{\fp}_{\cC}(\cM,\cC)$ given by \cref{thm:linearisingCfunctor}, so that $\iHom_{\cM}(M,-)$ is identified with 
\[
\Hom_{\cM}(M,-) \coloneqq \Hom_{\cM}(M,-)|_{\cP\cM} \cong \Hom_\cC(\unit, \iHom_\cM(M,-)|_{\cP\cM})
\]
under the equivalence, where we drop $|_{\cP\cM}$ from our notation for simplicity.
It is clear that the functors $\Hom_{\cM}(M,-) \ract P \cong \Hom_\cM({\rightdual{P}}\lact M,-)$ are projective for $M\in\cM$ and $P \in \proj(\cC)$ (one may use \cref{lem:projFunfp}, or just apply the classical Yoneda lemma), so the functors $\Hom_{\cM}(M,-)$ are $\cC$-projective in $\Fun^{\fp}(\cP\cM,\vec)$.

    We now focus on the other direction.
    By \cref{defn:FunfpPM} and \cref{cor:intHom-Yon-Emb}, a finitely presented object $F$ in $\Fun^{\fp}(\cP\cM,\vec)$ comes with an exact sequence
    \[\Hom_{\cM}(M,-)\xrightarrow{\Hom_\cM(f,-)}\Hom_{\cM}(N,-)\to F\to 0,\]
    induced by some morphism $f:N\to M$ in $\cP\cM$. Let $K$ denote the kernel of $f$ in $\cM$. Since $F$ is the cokernel of $\Hom_{\cM}(f,-)$, we obtain a natural transformation $\eta$ making the following diagram commutative
	\begin{equation}\label{eqn:inducednat}
	\begin{tikzcd}[column sep=large]
		\Hom_{\cM}(M,-)\ar[r,"\protect{\Hom_\cM(f,-)}"]\ar[d,"="] & \Hom_{\cM}(N,-)\ar[r]\ar[d,"="] & F\ar[r]\ar[d,dashed,"\eta"] & 0 \\
		\Hom_{\cM}(M,-)\ar[r,"\protect{\Hom_\cM(f,-)}"] & \Hom_{\cM}(N,-)\ar[r] & \Hom_{\cM}(K,-).
	\end{tikzcd}
	\end{equation}

    Now assume that $F$ is $\cC$-projective, take a non-zero $P \in \proj(\cC)$ and set $Q \coloneqq \rightdual{P}$. Then the presentation
    \[\Hom_{\cM}(Q\lact M,-)\to\Hom_{\cM}(Q\lact N,-)\to F\ract P\to 0\]
    is now split exact, since every term is projective. This means in particular that 
    \[0\to Q\lact K\to Q\lact N\xrightarrow{Q\lact f} Q\lact M\]
    is a split exact sequence, and so also
     \[\Hom_{\cM}(Q\lact M,-)\to\Hom_{\cM}(Q\lact N,-)\to \Hom_{\cM}(Q\lact K,-)\to 0\]
     is (split) exact. 
	%%%%%%% Expanded explanation 
	%Indeed, note that $X \xrightarrow{\tilde{f}} Y \xrightarrow{\tilde{p}} Z \to 0$ being split exact implies the existence of $\tilde{g}: Y \to X$ such that $\tilde{f}\tilde{g}\tilde{f} = \tilde{f}$ and $\tilde{g}\tilde{f}\tilde{g}=\tilde{g}$. By Yoneda we also get that  $0 \to K \to M \xrightarrow{f} N$ is split exact, because there exists $g:N \to M$ such that $fgf=f$ and $gfg=g$.
	But this means that the action $-\ract P$ turns \eqref{eqn:inducednat} into a morphism of exact sequences:
	\[
	\begin{tikzcd}
		\Hom_{\cM}(Q\lact M,-)\ar[r,]\ar[d,"="] & \Hom_{\cM}(Q\lact N,-)\ar[r]\ar[d,"="] & F \ract P \ar[r]\ar[d,dashed,"\eta \ract P"] & 0 \\
		\Hom_{\cM}(Q\lact M,-)\ar[r] & \Hom_{\cM}(Q\lact N,-)\ar[r] & \Hom_{\cM}(Q\lact K,-) \ar[r] & 0.
	\end{tikzcd}
	\]
It follows that $\eta \ract P$ is an isomorphism, and so $\eta$ must have been an isomorphism to begin with since $- \ract P$ reflects isomorphisms.
\end{proof}

\begin{theorem}\label{thm:functor-cat-Cprojdim}
The $\cC$-projective dimension of the right (resp.\ left) $\cC$-module category $\Fun^{\fp}_{\cC}(\cM,\cC)$ $\cong \Fun^{\fp}(\cP\cM,\vec)$ (resp.\ $\Fun^{\fp}_{\cC}(\rightop{\cM},\cC)$ $\cong \Fun^{\fp}(\cP\cM^{\op},\vec)$) is at most $2$.
\end{theorem}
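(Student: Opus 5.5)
Since $\Cproj{}$ is projectively resolving (\cref{prop:Cprojresolving}, which holds in any $\cC$-module category with enough projectives, in particular in $\Fun^{\fp}(\cP\cM,\vec)$), it suffices to exhibit, for every finitely presented $F$, a $\cC$-projective resolution of length at most $2$. I work in the model $\Fun^{\fp}(\cP\cM,\vec)$ and use \cref{prop:C-projrepresentable}: the functors $\Hom_\cM(M,-)|_{\cP\cM}$ with $M\in\cM$ arbitrary are exactly the $\cC$-projectives. The left case is symmetric.

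Take $F$ with a presentation
\[
\Hom_\cM(M,-)\xrightarrow{\Hom_\cM(f,-)}\Hom_\cM(N,-)\to F\to 0
\]
with $f:N\to M$ in $\cP\cM$ (\cref{defn:FunfpPM}). Let $C\coloneqq\coker(f)$ in $\cM$, with projection $c:M\to C$. I claim that the sequence
\[
0\to\Hom_\cM(C,-)\xrightarrow{\Hom_\cM(c,-)}\Hom_\cM(M,-)\xrightarrow{\Hom_\cM(f,-)}\Hom_\cM(N,-)\to F\to 0
\]
of functors restricted to $\cP\cM$ is exact. Exactness at $\Hom_\cM(M,-)$ and injectivity on the left follow pointwise: for every $E\in\cP\cM$, applying the left exact functor $\Hom_\cM(-,E)$ to $N\to M\to C\to 0$ gives $0\to\Hom(C,E)\to\Hom(M,E)\to\Hom(N,E)$ exact. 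Exactness at $\Hom_\cM(N,-)$ and at $F$ holds by construction.

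All three non-$F$ terms are $\cC$-projective by \cref{prop:C-projrepresentable}; indeed the first two are even projective, since $M,N\in\cP\cM$, while $\Hom_\cM(C,-)$ need not be, because $C$ need not lie in $\cP\cM$. This gives $\ipd(F)\le 2$. Equivalently, via \cref{prop:Cprojdimprojdim}, one could check that $\pd(F\ract P)\le 2$, since $F\ract P$ has the analogous resolution with $\rightdual{P}\lact C\in\cP\cM$, which is projective.

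The only delicate point is the identification of the $\cC$-projectives with all representables $\Hom_\cM(M,-)$ for $M\in\cM$, which is already \cref{prop:C-projrepresentable}, together with checking that the restriction to $\cP\cM$ preserves the pointwise left exactness. The latter is immediate since exactness in a functor category is tested objectwise. I would also note that the resolving property of $\Cproj{}$ in the functor category is not even needed, since we produce the resolution directly. For $\Fun^{\fp}(\cP\cM^{\op},\vec)$, use a presentation $\Hom_\cM(-,N)\to\Hom_\cM(-,M)\to G\to 0$ and the kernel $K$ of $N\to M$ in $\cM$, obtaining $0\to\Hom_\cM(-,K)\to\Hom_\cM(-,N)\to\Hom_\cM(-,M)\to G\to 0$ by left exactness of $\Hom_\cM(E,-)$.
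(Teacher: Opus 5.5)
Your proposal is correct and is essentially the paper's proof. As in the paper, you present $F$ by representables attached to a morphism in $\cP\cM$, take the cokernel of that morphism in $\cM$, and use left exactness of (internal) Hom to splice on the representable of the cokernel, which is $\cC$-projective by \cref{prop:C-projrepresentable}. The only difference is cosmetic: you work in the model $\Fun^{\fp}(\cP\cM,\vec)$ with ordinary Hom, whereas the paper works in $\Fun^{\fp}_\cC(\cM,\cC)$ with $\iHom_\cM$.
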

\begin{proof}
    By \cref{lem:FPiffFPfunctor}, $F \in \Fun^{\fp}_{\cC}(\cM,\cC)$ fits into the following exact sequence:
    \begin{equation}\label{eqn:projpresentation}
        \iHom_\cM(N,-) \xra{\iHom_\cM(f,-)} \iHom_\cM(M,-) \to F \to 0
    \end{equation}
    induced by $f:M\to N$ for $M,N \in \cP\cM$.
    Let $L \coloneqq \coker(f) \in \cM$. Left exactness of $\iHom_\cM(-,X)$ gives the exact sequence 
    \[
    0 \to \iHom_\cM(L,-) \to \iHom_\cM(N,-) \xra{\iHom_\cM(f,-)} \iHom_\cM(M,-),
    \]
    which combines with \eqref{eqn:projpresentation} to a $\cC$-projective resolution of $F$ of length $2$ ($\iHom_\cM(L,-)$ is a $\cC$-projective by \cref{prop:C-projrepresentable}).
\end{proof}

\begin{remark}
    Combining with \cref{cor:Cproj=Gproj}, we also have that the Gorenstein projectives of $\Fun^{\fp}_{\cC}(\cM,\cC)$ are precisely the functors $\iHom_{\cM}(M,-)$ for $M\in \cM$, and that the Gorenstein projective dimension of $\Fun^{\fp}_{\cC}(\cM,\cC)$ is at most 2.
    The same can be said for $\Fun_\cC^{\fp}(\rightop{\cM},\cC)$.
\end{remark}

\subsection{Almost split sequences and \texorpdfstring{$\cP\cM$}{PM}} \label{sec:PM-AR-translate}
\begin{lemma} \label{lem:ases-resolution}
    Let $\cM$ be a finite left $\cC$-module category.
    Suppose $0\to M\xra{f} E\xrightarrow{g} N\to 0$ is an almost split sequence in $\cM$.
    \begin{enumerate}
        \item \label{item:simplefunctor} Consider the corresponding exact sequence in $\Fun^{\fp}_{\cC}(\cM,\cC)$:
        \begin{equation}\label{eqn:simpleprojres}
            0\to\iHom_{\cM}(N,-)\to\iHom_{\cM}(E,-)\xra{\iHom_\cM(f,-)} \iHom_{\cM}(M,-)\to S_M\to0
        \end{equation}
        with $S_M$ defined as the cokernel of $\iHom_\cM(f,-)$.
        If $M \in \cP\cM$, then $S_M$ is a simple object in $\Fun^{\fp}_{\cC}(\cM,\cC)$; otherwise $S_M = 0$.
        \item Consider the corresponding exact sequence in $\Fun^{\fp}_{\cC}(\rightop{\cM},\cC)$:
        \begin{equation}
            0\to\iHom_{\cM}(-,M)\to\iHom_{\cM}(-,E)\xra{\iHom_\cM(-,g)} \iHom_{\cM}(-,N)\to S_N\to0
        \end{equation}
        with $S_N$ defined as the cokernel of $\iHom_\cM(-,g)$.
        If $N \in \cP\cM$, then $S_N$ is a simple object in $\Fun^{\fp}_{\cC}(\rightop{\cM},\cC)$; otherwise $S_N = 0$.
    \end{enumerate}
\end{lemma}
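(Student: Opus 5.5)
Exactness of \eqref{eqn:simpleprojres} up to $\iHom_\cM(M,-)$ comes from left exactness of the contravariant functor $\iHom_\cM(-,X)$ applied to $0\to M\to E\to N\to 0$. So the task is to identify the cokernel $S_M$. I would work in the linearised model of \cref{thm:linearisingCfunctor}, where $\iHom_\cM(X,-)$ becomes $\Hom_\cM(X,-)|_{\cP\cM}$. Since that equivalence is exact, $S_M$ corresponds to the functor
\[
G\colon \cP\cM\to\vec,\qquad G(Y)=\coker\bigl(\Hom_\cM(E,Y)\xrightarrow{-\circ f}\Hom_\cM(M,Y)\bigr), \quad Y\in\cP\cM.
\]
Because $f$ is left almost split, the image of $-\circ f$ is exactly the set of non-split-mono maps $M\to Y$. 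As $M$ is indecomposable and the category is Hom-finite and Krull--Schmidt, these are precisely the radical maps $\rad_\cM(M,Y)$. Hence $G(Y)\cong\Hom_\cM(M,Y)/\rad_\cM(M,Y)$ for all $Y\in\cP\cM$.

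Next I split into cases. If $M\notin\cP\cM$, then no $Y\in\cP\cM$ has $M$ as a summand, so every map $M\to Y$ is a non-split mono and $G(Y)=0$. Thus $G=0$ and $S_M=0$. If $M\in\cP\cM$, then $G$ is the top of the representable $\Hom_\cM(M,-)|_{\cP\cM}$. By \cref{lem:projFunfp} together with the Yoneda lemma, $\End(\Hom_\cM(M,-)|_{\cP\cM})\cong\End_\cM(M)^{\op}$ is local, so this projective functor is indecomposable with radical $\rad_\cM(M,-)|_{\cP\cM}$. I would then show directly that $G$ is simple. We have $G(M)\cong \End_\cM(M)/\rad\End_\cM(M)\cong\Bbbk$ (as $\Bbbk$ is algebraically closed), and $G(Y)$ is spanned by the classes of split monos $M\to Y$. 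Given a non-zero subfunctor $G'\subseteq G$, pick $Y$ and a class $[u]\in G'(Y)$ represented by a split mono $u$ with retraction $r$. Applying $G'(r)$ gives $[ru]=[\id_M]\in G'(M)$. Every class in any $G(Y')$ is then the image of $[\id_M]$ under $G(v)$ for a suitable map $v$, so $G'=G$.

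The main obstacle I expect is bookkeeping around the equivalence. We need that $S_M$, defined as a cokernel in $\Fun^{\fp}_\cC(\cM,\cC)$, really corresponds to the pointwise cokernel $G$ in $\Fun^{\fp}(\cP\cM,\vec)$. For this I would use that $\Hom_\cC(\unit,-)$ restricted to $\cP\cM$ is the exact equivalence of \cref{thm:linearisingCfunctor}, so it preserves cokernels and simplicity. One could also check it directly: for $Y=P\lact Y'$, the functor $\Hom_\cC(\unit,\iHom_\cM(-,P\lact Y'))\cong\Hom_\cC(\leftdual{P},\iHom_\cM(-,Y'))$ is exact because $\leftdual{P}$ is projective. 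Part (ii) is dual, using that $g$ is right almost split and the contravariant model $\Fun^{\fp}(\cP\cM^{\op},\vec)$.
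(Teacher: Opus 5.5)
Your proposal is correct and follows essentially the same route as the paper. Both arguments transport $S_M$ along the equivalence of \cref{thm:linearisingCfunctor} and compute it pointwise on $\cP\cM$ as $\Hom_\cM(M,Y)/\rad_\cM(M,Y)$, using that $f$ is left almost split; this is zero unless $M\in\cP\cM$, in which case it is the simple top of $\Hom_\cM(M,-)|_{\cP\cM}$. Your extra checks, namely exactness of $\Hom_\cC(\unit,-)$ on objects $P\lact Y'$ via projectivity of $\leftdual{P}$ and the explicit subfunctor argument for simplicity, fill in details the paper leaves implicit.
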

\begin{proof}
    We only prove \ref{item:simplefunctor}; the other follows similarly. 
    Note that the exactness of \eqref{eqn:simpleprojres} follows from left exactness of the internal hom functor and the fact that $S_M$ is defined as the cokernel.
    With $S'_M$ denoting the image of $S_M$ under the equivalence with $\Fun^{\fp}(\cP\cM,\vec)$ in \cref{thm:linearisingCfunctor}, this exact sequence turns into the following exact sequence for every $X\in\cP\cM$:
    \[0\to \Hom_\cM(N,X)\to \Hom_\cM(E,X)\to\Hom_\cM(M,X)\to S'_M(X)\to 0.\]
    Since we started from an almost split sequence, $M$ is indecomposable and we have $S'_M(X)=0$ for all indecomposable $X\not\cong M$ and otherwise $S'_M(M)=\Bbbk$.
    Hence $S'_M \in \Fun^{\fp}(\cP\cM,\vec)$ is a simple object if $M \in \cP\cM$, and otherwise $S'_M=0$.
\end{proof}

\begin{theorem} \label{thm:PM-ARtrans}
    Let $\cM$ be a finite left $\cC$-module category, and let $0\to M \xrightarrow{f} E \xrightarrow{g} N \to 0$ be an almost split sequence in $\cM$.
    Then $M \in \cP\cM$ if and only if $N \in \cP\cM$. 
    In particular, $\cP\cM$ is closed under Auslander--Reiten translations $\tau^\pm$.
\end{theorem}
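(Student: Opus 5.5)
We prove that $M\in\cP\cM$ implies $N\in\cP\cM$; the converse is the dual argument, obtained by exchanging the roles of $\Fun^{\fp}_\cC(\cM,\cC)$ and $\Fun^{\fp}_\cC(\rightop{\cM},\cC)$. The strategy is to compute the $\cC$-projective dimension of the simple functor $S_M$ of \cref{lem:ases-resolution} in two ways, combining \cref{prop:C-projrepresentable} with \cref{prop:finite_pd=finite_ipd}.

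Assume $M\in\cP\cM$. By \cref{lem:ases-resolution}, the sequence \eqref{eqn:simpleprojres} is a resolution
\[
0\to\iHom_{\cM}(N,-)\to\iHom_{\cM}(E,-)\to \iHom_{\cM}(M,-)\to S_M\to0
\]
of the non-zero simple object $S_M$. Each term $\iHom_\cM(X,-)$ is $\cC$-projective by \cref{prop:C-projrepresentable}, so $\ipd(S_M)\le 2$. The first step is to show $\pd(S_M)<\infty$.

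Since $M\in\cP\cM$, the functor $\iHom_\cM(M,-)$ is projective by \cref{lem:projFunfp}. Let $\iota:\unit\hookrightarrow \leftdual{Q}$ be as in \cref{lem:functoriallyfinite}. The map $\iota\lact N: N\hookrightarrow \leftdual{Q}\lact N$ is a monomorphism with target in $\cP\cM$, and it is a left $\cP\cM$-approximation. I would use it to replace $\iHom_\cM(N,-)$ by projectives. Let $W:=\coker(\iota\lact N)\cong \coker(\iota)\lact N$. Left exactness of $\iHom$ gives $0\to\iHom(W,-)\to\iHom(\leftdual{Q}\lact N,-)\to\iHom(N,-)$. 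Surjectivity of the last map would follow from $\Hom_\cM(\leftdual{Q}\lact N,X)\to\Hom_\cM(N,X)$ being onto for $X\in\cP\cM$, which is exactly the approximation property, tested through the equivalence of \cref{thm:linearisingCfunctor}. Iterating this does not obviously terminate, however.

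A cleaner route is to avoid computing $\pd(S_M)$ and instead argue by contradiction. Suppose $N\notin\cP\cM$. Then $\iHom_\cM(N,-)$ is $\cC$-projective but not projective. After acting with $-\ract P$ for $0\ne P\in\proj(\cC)$, the sequence $0\to \iHom(N,-)\ract P\to\iHom(E,-)\ract P\to \iHom(M,-)\ract P\to S_M\ract P\to 0$ is a projective resolution of length $2$ of $S_M\ract P$. In the model $\Fun^{\fp}(\cP\cM,\vec)$, the second syzygy of $S_M$ is $\Hom_\cM(N,-)|_{\cP\cM}$ (with $\Hom(E,-)\to\Hom(M,-)$ the beginning of the minimal resolution), and $\Ext^2(S_M,-)$ is detected by this syzygy. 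The key claim is that $S_M\ract P$ has projective dimension exactly $2$ with last term $\Hom(\rightdual{P}\lact N,-)$. Via \cref{prop:Cprojdimprojdim}, this makes $\ipd(S_M)=\pd(S_M\ract P)\le 2$, and the resolution \eqref{eqn:simpleprojres} is then a $\cC$-projective resolution of the form \eqref{eqn:projtoCprojrest}. Thus $\Hom(N,-)$ projective in $\Fun^{\fp}(\cP\cM,\vec)$ is equivalent to $S_M$ having finite projective dimension, by \cref{prop:finite_pd=finite_ipd} and Schanuel. The remaining step, which I expect to be the main obstacle, is to show $\pd(S_M)<\infty$. I would try to exploit that $M=\tau N$, that is, $M$ is the $\tau$-image, so that $\Hom(M,-)\to S_M$ relates, via the Auslander--Reiten formula $D\overline{\Hom}(-,\tau N)\cong\Ext^1(N,-)$, to the functor $\Ext^1_\cM(N,-)|_{\cP\cM}$. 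Using the adjunction $\Ext^1(N,P\lact X)\cong\Ext^1(\leftdual{P}\lact N,X)$, I would express this in terms of $\leftdual{P}\lact N\in\cP\cM$, giving a finite projective resolution built from projective functors $\Hom(\leftdual{P}\lact N,-)$.

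Once $\iHom_\cM(N,-)$ is projective, \cref{lem:projFunfp} gives $\iHom_\cM(N,-)\cong\iHom_\cM(N',-)$ with $N'\in\cP\cM$. Full faithfulness in \cref{cor:intHom-Yon-Emb} then yields $N\cong N'\in\cP\cM$. Closure under $\tau^{\pm}$ follows immediately, since $\tau N=M$ for non-projective indecomposable $N$, and projectives and injectives already lie in $\cP\cM$.
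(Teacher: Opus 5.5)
Your proposal has a genuine gap, and the route you settle on cannot work. You reduce the theorem to showing $\pd(S_M)<\infty$, claiming via \cref{prop:finite_pd=finite_ipd} and Schanuel that this is equivalent to $\iHom_\cM(N,-)$ being projective. You then leave $\pd(S_M)<\infty$ as the ``main obstacle''.

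That Schanuel comparison treats \eqref{eqn:simpleprojres} as a projective resolution in all but its last term. This requires $\iHom_\cM(E,-)$ to be projective, i.e.\ $E\in\cP\cM$, and nothing guarantees it. In \cref{eg:introA2} the almost split sequence $0\to(0\to B_2)\to(B_2\xrightarrow{\neq}B_2)\to(B_2\to 0)\to 0$ has $M,N\in\cP\cM$ but $E\notin\cP\cM$ (see \cref{fig:introA2}). There $\iHom_\cM(E,-)$ is $\cC$-projective (\cref{prop:C-projrepresentable}) but not projective (\cref{lem:projFunfp} and \cref{cor:intHom-Yon-Emb}). By \cref{prop:finite_pd=finite_ipd} it therefore has infinite projective dimension. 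Since $\iHom_\cM(N,-)$ and $\iHom_\cM(M,-)$ are projective, finiteness of $\pd(S_M)$ would propagate along \eqref{eqn:simpleprojres} to $\iHom_\cM(E,-)$. Hence $\pd(S_M)=\infty$ even though $N\in\cP\cM$.

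So the intermediate claim is false in general, and no use of the Auslander--Reiten formula can supply a finite projective resolution of $S_M$. Your first attempt via $\iota\lact N$ is circular: it builds a projective resolution of $\iHom_\cM(N,-)$ that terminates exactly when $N\in\cP\cM$.

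The missing idea is to use only finite $\cC$-projective dimension, which is automatic, together with minimality.
\begin{enumerate}
\item Let $\mathcal{R}$ be the image of $\iHom_\cM(f,-)$. Under \cref{thm:linearisingCfunctor}, $\mathcal{R}$ becomes $\rad_\cM(M,-)|_{\cP\cM}$, and since $M\in\cP\cM$ you may evaluate at $M$.
\item Consequently any map $\iHom_\cM(E',-)\to\mathcal{R}$ comes from some $y:M\to E'$ that is not a split monomorphism. Since $f$ is left almost split, $y$ factors through $f$.
\item Hence $\iHom_\cM(E,-)\twoheadrightarrow\mathcal{R}$ is a right $\cC$-projective approximation. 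It is right minimal because $f$ is left minimal.
\item The exact sequence $0\to\iHom_\cM(N,-)\to\iHom_\cM(E,-)\to\mathcal{R}\to 0$ shows $\ipd(\mathcal{R})\le 1$. So \cref{cor:approx} gives a second right $\cC$-projective approximation $\iHom_\cM(E_{\mathcal R},-)\to\mathcal{R}$ whose kernel $\iHom_\cM(Y_{\mathcal R},-)$ has projective dimension $0$, i.e.\ $Y_{\mathcal R}\in\cP\cM$.
\item Comparing the two approximations and using right minimality, $\iHom_\cM(N,-)$ is a direct summand of $\iHom_\cM(Y_{\mathcal R},-)$. By \cref{cor:intHom-Yon-Emb}, $N$ is a summand of $Y_{\mathcal R}$, so $N\in\cP\cM$.
\end{enumerate}
Your closing steps are fine: projectivity of $\iHom_\cM(N,-)$ does give $N\in\cP\cM$, and closure under $\tau^{\pm}$ follows as you say.
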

\begin{proof}
    We will only prove that $M \in \cP\cM$ implies $N \in \cP\cM$; the other implication follows by working with $\Fun^{\fp}(\rightop{\cM},\cC)\cong \Fun^{\fp}(\cP\cM^{\op},\vec)$ instead.

    Let $M \xrightarrow{f} E$ be left minimal and left almost split, with $M \in \cP\cM$.
    Consider the image factorisation of $\iHom_\cM(f,-): \iHom_\cM(E,-) \to \iHom_\cM(M,-)$ in $\Fun_\cC^{\fp}(\cM,\cC)$:
    \[
    \iHom_\cM(E,-) \xtwoheadrightarrow{\pi_f} \mathcal{R} \hookrightarrow  \iHom_\cM(M,-).
    \]
    Note that by \cref{prop:C-projrepresentable}, $\iHom_\cM(E,-)$ is $\cC$-projective in $\Fun^{\fp}_\cC(\cM,\cC)$.
    We claim that $\iHom_\cM(E,-) \xtwoheadrightarrow{\pi_f} \mathcal{R}$ is in fact a right minimal, right $\cC$-projective approximation of $\mathcal{R}$ in $\Fun_\cC^{\fp}(\cM,\cC)$.
    We begin by showing that for any $E' \in \cM$, any $\varphi: \iHom_\cM(E',-) \to \mathcal{R}$ factors through $\pi_f$:
    \[
    \begin{tikzcd}[column sep = large]
        & \iHom_\cM(E',-) \ar[d, "\varphi"] \ar[dl,dashed, swap, "\exists"] \\
        \iHom_\cM(E,-) \ar[r, two heads, "\pi_f"] & \mathcal{R} \ar[r, hook] & \iHom_\cM(M,-).
    \end{tikzcd}
    \]
    By the (contravariant) internal Hom Yoneda embedding in \cref{cor:intHom-Yon-Emb}, this is equivalent to the existence of the following dashed morphism:
    \[
    \begin{tikzcd}[column sep = large, arrows=<-]
        & E' \ar[dr, "y"] \ar[dl,dashed, swap, "\exists"] \\
        E \ar[rr, "f"] & & M,
    \end{tikzcd}
    \]
    where $y$ is such that $\iHom_\cM(y,-)$ is the composition $\iHom_\cM(E',-) \xrightarrow{\varphi}\mathcal{R}\hookrightarrow \iHom_\cM(M,-)$.
    Since $f$ is left almost split, we just have to show that $y:M \to E'$ is not a split monomorphism.
    Since $M$ is indecomposable (implied by almost split sequence), it suffices to show that $\Hom_\cM(y,M): \Hom_\cM(E',M) \to \Hom_\cM(M,M)$ has image contained in $\rad_\cM(M,M)$ (see e.g.\ \cite[IV Lemma 1.9]{ASS_elerep}).
    But note that under the identification $\Fun^{\fp}_\cC(\cM,\cC) \xrightarrow{\cong} \Fun^{\fp}(\cP\cM,\vec)$ from \cref{thm:linearisingCfunctor}, $\iHom_\cM(y,-)$ is sent to $\Hom_\cM(y,-)|_{\cP\cM}$ and $\mathcal{R}$ is sent to $\rad_\cM(M,-)|_{\cP\cM}$.
    Since $M \in \cP\cM$ by assumption, we can evaluate at $M$ to get $\img(\Hom_\cM(y,M)) \subseteq \rad_\cM(M,M)$ as required.
    From here, right minimality of $\pi_f:\iHom_\cM(E,-) \to \mathcal{R}$ is simply a consequence of left minimality of $f$ and \cref{cor:intHom-Yon-Emb}.

    Since $M \in \cP\cM$ by assumption, recall from \cref{lem:ases-resolution} and \cref{prop:C-projrepresentable} that we have the following $\cC$-projective resolution of $S_M \neq 0$ in $\Fun^{\fp}_\cC(\cM,\cC)$:
    \[
    0 \to \iHom_\cM(N,-) \to \iHom_\cM(E,-) \to \iHom_\cM(M,-) \to S_M \to 0.
    \]
    In particular, the following sequence is exact
    \[
    0 \to \iHom_\cM(N,-) \xrightarrow{\iHom_\cM(g,-)} \iHom_\cM(E,-) \xrightarrow{\pi_f} \mathcal{R} \to 0,
    \]
    and $\mathcal{R}$ has $\cC$-projective dimension $\leq 1$ in $\Fun^{\fp}_\cC(\cM,\cC)$. 
    %We claim that the $\pi_f$ being a right minimal, right $\cC$-approximation implies that $\iHom_\cM(N,-)$ is in fact a projective in $\Fun^{\fp}_\cC(\cM,\cC)$, which implies that $N \in \cP\cM$.

    Consider the objects $Y_\mathcal{R}, E_\mathcal{R} \in \cM$ realising the following exact sequence from \cref{cor:approx} applied to $\mathcal{R}\in\Fun^{\fp}_\cC(\cM,\cC)$:
    \[
    0\to \iHom_\cM(Y_\mathcal{R},-) \to \iHom_\cM(E_\mathcal{R},-) \to \mathcal{R} \to 0,
    \]
    so that $\iHom_\cM(E_\mathcal{R},-) \to \mathcal{R}$ is a right $\cC$-projective-approximation of $\mathcal{R}$ and $\iHom_\cM(Y_\mathcal{R},-)$ is projective in $\Fun^{\fp}(\cP\cM,\vec)$.
    In particular,  $Y_\mathcal{R} \in \cP\cM$ by \cref{lem:projFunfp}.
    
    Using the fact that $\iHom_\cM(E_\mathcal{R},-) \to \mathcal{R}$ and $\iHom_\cM(E,-) \xrightarrow{\pi_f} \mathcal{R}$ are both right $\cC$-projective-approximations, we obtain the following maps of exact sequences:
    \[
    \begin{tikzcd}[column sep=small]
        0 \ar[r] & \iHom_\cM(N,-) \ar[r] \ar[d, dashed] & \iHom_\cM(E,-) \ar[d, dashed] \ar[r, "\pi_f"] & \mathcal{R} \ar[r] \ar[d,"="] & 0 \\
        0 \ar[r] & \iHom_\cM(Y_\mathcal{R},-) \ar[r] \ar[d, dashed] & \iHom_\cM(E_\mathcal{R},-) \ar[d, dashed] \ar[r] & \mathcal{R} \ar[r] \ar[d,"="] & 0 \\
        0 \ar[r] & \iHom_\cM(N,-) \ar[r] & \iHom_\cM(E,-) \ar[r, "\pi_f"] & \mathcal{R} \ar[r] & 0.
    \end{tikzcd}
    \]
    By right minimality of $\iHom_\cM(E,-) \xrightarrow{\pi_f} \mathcal{R}$, the composition of the middle column results in an isomorphism.
    It follows that the composition of the first column is also an isomorphism.
    This, together with the internal Hom Yoneda embedding from \cref{cor:intHom-Yon-Emb}, implies that $N$ is a summand of $Y_\mathcal{R} \in \cP\cM$, and hence $N$ is also in $\cP\cM$, as required.
\end{proof}

Finally, we conclude that $\cP\cM$ satisfies what is known as a (1-)\emph{precluster tilting subcategory} of $\cM$ introduced by Iyama--Solberg in \cite{IS_preclustertilting}:
\begin{definition}
    Let $\cM$ be a finite abelian category.
    A full subcategory $\cA\subseteq \cM$ is called a (1-)\emph{precluster tilting subcategory} if it satisfies the following properties:
       \begin{enumerate}
        \item $\cA$ is a generator and cogenerator of $\cM$.
        \item $\cA$ is closed under Auslander--Reiten translates.
        \item $\cA \subseteq \cM$ is functorially finite.
    \end{enumerate}
\end{definition}
\begin{corollary}\label{cor:PM-precluster-tilting}
     Let $\cM$ be a finite $\cC$-module category. Then $\cP\cM$ is a precluster tilting subcategory of $\cM$.
\end{corollary}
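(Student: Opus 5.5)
The plan is to verify the three defining properties of a precluster tilting subcategory one at a time, using results already established.

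Functorial finiteness is exactly \cref{lem:functoriallyfinite}. The right $\cP\cM$-approximation $\pi\lact K\colon Q\lact K\twoheadrightarrow K$ and the left approximation $\iota\lact K\colon K\hookrightarrow \leftdual{Q}\lact K$ exist for every $K\in\cM$.

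For the generator and cogenerator property, I will use the same two morphisms. Since $\pi\colon Q\twoheadrightarrow\unit$ is an epimorphism and $-\lact K$ is exact, $\pi\lact K$ is an epimorphism from an object of $\cP\cM$ onto $K$. Dually, $\iota\lact K$ is a monomorphism from $K$ into $\leftdual{Q}\lact K$, which lies in $\cP\cM$ because $\leftdual{Q}$ is projective in $\cC$. One could alternatively note that $\proj(\cM)\subseteq\cP\cM$ (\cref{rmk:PMforexact}) and that $\inj(\cM)\subseteq\cP\cM$. The latter holds because every injective $I$ is a summand of $\leftdual{Q}\lact I$ via the split mono $\iota\lact I$. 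Either way, every object admits an epimorphism from, and a monomorphism into, an object of $\cP\cM$.

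Closure under Auslander--Reiten translates is \cref{thm:PM-ARtrans}. I still need to make sure that $\tau$ and $\tau^{-}$ applied to objects of $\cP\cM$ stay in $\cP\cM$ in all cases. Since $\cP\cM$ is additive and closed under summands, it suffices to treat an indecomposable $X\in\cP\cM$. If $X$ is non-projective, there is an almost split sequence $0\to\tau X\to E\to X\to 0$, and \cref{thm:PM-ARtrans} gives $\tau X\in\cP\cM$. If $X$ is projective, then $\tau X=0$, which lies in $\cP\cM$ trivially. The argument for $\tau^{-}$ is dual: for non-injective indecomposable $X$, the almost split sequence starting at $X$ applies, and otherwise $\tau^{-}X=0$.

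There is no real obstacle. The only step needing care is the generator/cogenerator claim, where one must check that $\leftdual{Q}\lact K$ genuinely lies in $\cP\cM$; this follows because $\leftdual{Q}$ is projective, as duals of projectives are projective.
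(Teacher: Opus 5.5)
Your proposal is correct and follows essentially the same route as the paper: generator/cogenerator via $\proj(\cM)\cup\inj(\cM)\subseteq\cP\cM$ (which you derive, alternatively, from the epimorphism $\pi\lact K$ and monomorphism $\iota\lact K$), closure under $\tau^{\pm}$ via \cref{thm:PM-ARtrans}, and functorial finiteness via \cref{lem:functoriallyfinite}. Your extra checks, such as $\leftdual{Q}$ being projective and $\tau X=0$ for projective $X$, only make explicit details the paper leaves implicit.
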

\begin{proof}
    Since $\cP\cM$ contains both the projectives and injectives of $\cM$, it is therefore a generator and cogenerator of $\cM$.
    Closure under the Auslander--Reiten translate is shown in \cref{thm:PM-ARtrans}.
    Functorial finiteness is shown in \cref{lem:functoriallyfinite}.
\end{proof}

\subsection{An equivariant Auslander(--Solberg) correspondence} \label{sec:equiv-Aus-cor}
In this subsection, we prove a $\cC$-equivariant version of the Auslander--Solberg correspondence for finite $\cC$-module categories.
Let us first recall some relevant definitions.
\begin{definition}
Let $\cN$ be a finite abelian category and let $N \in \cN$.
The \emph{dominant dimension} $\dom.\dim(N)$ of $N$ is defined as the supremum over all integer $d$ such that in the minimal injective coresolution of $N$
\[
0 \to N \to I_0 \to I_1 \to \cdots \to I_{d-1} \to I_d \to \cdots,
\]
we have $I_j \in \proj(\cN)\cap\inj(\cN)$ for all $j < d$.
The \emph{dominant dimension of $\cN$} is then defined as\footnote{Let $A$ be the finite dimensional algebra realising $\cN$, so that $\cN \cong \mod_A$. Then $\dom.\dim(\cN)$ is more commonly defined as $\dom.\dim(A)$, which agrees with our more categorical definition.}
\[
\dom.\dim(\cN) \coloneqq \min \{\dom.\dim(P) \mid P \in \proj(\cN)\}.
\]
\end{definition}
Note that for $M$ a projective-injective, $\dom.\dim(M) = \infty$, so the dominant dimension is an inverse measure of how far a module is from being a projective-injective: the smaller number, the further it is.
In particular, $\dom.\dim(\cN) = \infty$ if $\cN$ is a quasi-Frobenius category.

We have the following correspondence due to Auslander--Solberg in \cite{AS_correspondence}; we shall state it using the modern reformulation of Iyama--Solberg in \cite[Theorem 4.5]{IS_preclustertilting}:
\begin{theorem}[\cite{AS_correspondence}]\label{thm:AS-correspondence}
There is a one-to-one correspondence between the following two classes of finite abelian categories (up to equivalences):
    \begin{enumerate}
        \item \label{item:precluster} the class of finite abelian categories $\cA$ with a choice of a (1-)precluster tilting subcategory $\cX\subseteq \cA$ that has an additive generator $X \in \cX$, i.e.\ $\cX = \add(X)$; and
        \item \label{item:min_Aus-Gor}the class of finite abelian categories $\cB$ which is (1-)minimal Auslander--Gorenstein, i.e.\ $\Gpd(\cB) \leq 2 \leq \dom.\dim(\cB)$. 
    \end{enumerate}
    The bijection from \ref{item:precluster} to \ref{item:min_Aus-Gor} is given by $(\cA,\cX) \mapsto \cB \coloneqq \Fun^{\fp}(\cX^{\op},\vec)$, with inverse from \ref{item:min_Aus-Gor} to \ref{item:precluster} given by $\cB \mapsto (\cA\coloneqq \Gproj(\cB), \cX\coloneqq \proj(\cB))$.
\end{theorem}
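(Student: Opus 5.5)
The plan is to prove the two directions separately, following the strategy of Iyama--Solberg \cite[Theorem 4.5]{IS_preclustertilting}, and then check that the two constructions are mutually inverse.

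\emph{From (i) to (ii).} Start with $(\cA,\cX=\add(X))$ and set $\cB\coloneqq\Fun^{\fp}(\cX^{\op},\vec)$, which is $\mod$ of $\Lambda\coloneqq\End_\cA(X)$ (with the appropriate variance). Since $\cX$ is contravariantly finite, $\cX$ has weak kernels, so $\cB$ is abelian with $\proj(\cB)=\{\Hom_\cA(-,X')|_\cX\}$. The key steps are as follows.
\begin{enumerate}
\item Define the restricted Yoneda functor $\cA\to\cB$, $A\mapsto\Hom_\cA(-,A)|_\cX$. It is fully faithful because $\cX$ is a generator.
\item Take $F\in\cB$ with presentation $\Hom(-,X_1)\xrightarrow{\Hom(-,h)}\Hom(-,X_0)\to F\to0$. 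Left exactness gives the exact sequence $0\to\Hom(-,\ker h)|_\cX\to\Hom(-,X_1)\to\Hom(-,X_0)\to F\to 0$, so every $F$ has a resolution of length $2$ by objects $\Hom(-,A)|_\cX$.
\item Show that each $\Hom(-,A)|_\cX$ is Gorenstein projective in $\cB$. To do this, I would use a right $\cX$-approximation of $A$ together with the left $\cX$-approximation coming from $\cX$ being a cogenerator, and splice resolutions to build a complete resolution. The $\Hom$-exactness against projectives should come from closure of $\cX$ under $\tau^{\pm}$: for $X'\in\cX$, the vanishing $\Ext^i_\cB(\Hom(-,A)|_\cX,\Hom(-,X')|_\cX)=0$ reduces via Auslander--Reiten duality to statements about $\tau X'\in\cX$.

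Together, steps 2 and 3 give $\Gpd(\cB)\le 2$.
\item For dominant dimension, take an injective envelope $X'\hookrightarrow I$ in $\cA$, so $I\in\cX$, with cokernel $C$ and a further embedding $C\hookrightarrow I'$. Apply the restricted Yoneda functor to obtain $0\to\Hom(-,X')\to\Hom(-,I)\to\Hom(-,I')$. It remains to check that $\Hom(-,I)|_\cX\cong D\Hom(\nu^{-1}I,-)|_\cX$ is projective-injective, which uses that $\cX$ contains the projectives $\nu^{-1}I$.
\end{enumerate}

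\emph{From (ii) to (i).} Given $\cB$ with $\Gpd(\cB)\le2\le\dom.\dim(\cB)$, let $Q$ be an additive generator of the projective-injectives. Since $\dom.\dim\ge 2$, the Morita--Tachikawa double centraliser property holds, so $\Hom_\cB(Q,-)$ identifies $\proj(\cB)$ fully faithfully with a subcategory of $\mod_{\End(Q)}$. The central claim is that $\Hom_\cB(Q,-)$ restricts to an equivalence $\Gproj(\cB)\simeq\mod_{\End(Q)}$. For this, I would use that Gorenstein projectives with $\Gpd(\cB)\le 2$ are exactly the second syzygies, and that second syzygies correspond to modules via the projective-injective copresentations. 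It would then follow that $\cA\coloneqq\Gproj(\cB)$ is a finite abelian category and $\cX\coloneqq\proj(\cB)$ is an additive generator subcategory of it. The remaining work is to verify the precluster tilting axioms:
\begin{itemize}
\item generating and cogenerating follow from $Q$ being both projective and injective;
\item functorial finiteness is automatic since $\cX$ has an additive generator;
\item $\tau$-closure is the Gorenstein symmetry $\mathrm{id}(\proj(\cB))\le 2$, translated through the equivalence.
\end{itemize}

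\emph{Bijectivity.} The composite from (i) to (ii) to (i) sends $(\cA,\cX)$ back to itself: by step 3, the Gorenstein projectives of $\cB$ are precisely the restricted representables $\Hom(-,A)|_\cX$, and the projectives are the representables of $\cX$. The composite from (ii) to (i) to (ii) sends $\cB$ to $\Fun^{\fp}(\proj(\cB)^{\op},\vec)\simeq\cB$.

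\emph{Main obstacle.} I expect the hardest step to be the identification $\Gproj(\cB)\simeq\mod_{\End(Q)}$, in particular showing that this category is abelian, together with the corresponding Gorenstein projectivity check in step 3. I would first try to reduce both to the classical Auslander--Iyama argument, which relates second syzygies and $\tau$-closure through the Auslander--Reiten formula.
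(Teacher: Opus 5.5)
The paper does not prove this statement; it quotes Auslander--Solberg in the Iyama--Solberg formulation, and your outline follows that route. Your direction (i)$\Rightarrow$(ii) is sound. In step 3, the coresolution of $A$ by left $\cX$-approximations stays exact after applying the restricted Yoneda functor because $\tau\cX\subseteq\cX$; use $\Ext^1_\cA(X',A)\cong D\overline{\Hom}_\cA(A,\tau X')$ with the injectively stable Hom. The $\Hom$-exactness of the resolution by right approximations uses $\tau^{-}\cX\subseteq\cX$ in the same way.

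The genuine gap is in (ii)$\Rightarrow$(i). The functor $\Hom_\cB(Q,-)$ is \emph{not} fully faithful on $\proj(\cB)$, so it cannot give $\Gproj(\cB)\simeq\mod_{\End(Q)}$.

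\textbf{Counterexample.} Take $\cA=\mod_{\Bbbk A_2}$ and $\cX=\cA$, with $S_2$ the simple projective, $P_1$ the projective-injective and $S_1$ the simple injective. Then $\cB$ is the module category of the Auslander algebra. It satisfies (ii) and has finite global dimension, so $\Gproj(\cB)=\proj(\cB)$. The projective-injectives of $\cB$ are the $\Hom_\cA(-,I)$ with $I\in\inj(\cA)$, so $Q=\Hom_\cA(-,P_1\oplus S_1)$. Yoneda then gives $\Hom_\cB(Q,\Hom_\cA(-,S_2))\cong\Hom_\cA(P_1\oplus S_1,S_2)=0$, so your functor kills a nonzero projective.

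\textbf{Why it fails.} Let $\nu$ be the Nakayama functor of $\cB$. The functor $\Hom_\cB(Q,-)$ is fully faithful on objects copresented by $\add(\nu Q)$. Dominant dimension $\geq 2$ only says that projectives are copresented by $\add(Q)$, and $\add(\nu Q)\neq\add(Q)$ in general; here $\nu Q$ has the simple functor at $S_1$ as a summand.

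\textbf{Fix.} Apply the double centraliser property through the contravariant functor $N\mapsto\Hom_\cB(N,-)|_{\add Q}$, or equivalently the covariant $\Hom_\cB(\nu^{-1}Q,-)$. This is exactly the identification $\Gproj(\cN)\simeq\Fun^{\fp}(\proj(\cN)\cap\inj(\cN),\vec)^{\op}$ that the paper invokes in the proof of \cref{thm:equivariant-Aus-cor}. Under this functor, $\add Q$ corresponds to the injectives of $\cA$ and $\nu^{-1}(\add Q)$ to its projectives. So the generator--cogenerator property comes from $\add Q\cup\nu^{-1}(\add Q)\subseteq\proj(\cB)$, not from $Q$ being both projective and injective.

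Two smaller points remain.
\begin{enumerate}
\item In the bijectivity check, step 3 only shows that restricted representables are Gorenstein projective. The reverse inclusion needs that, for $\Gpd(\cB)\le 2$, Gorenstein projectives are exactly the second syzygies, combined with your step 2 kernel argument.
\item The $\tau$-closure bullet is only asserted. Translating $\Gpd(\cB)\le 2$ into $\tau^{\pm}\cX\subseteq\cX$ is the substantive part of this direction in Iyama--Solberg, and you still have to supply it.
\end{enumerate}
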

We note that the correspondence above recovers the classical Auslander correspondence by insisting that $\cX=\cA$.

Here we prove instead a $\cC$-equivariant Auslander correspondence that recovers the classical Auslander correspondence by setting $\cC=\vec$; the relationship between all three correspondences is discussed in \cref{rmk:correspondences-relations}.
\begin{theorem}[$\cC$-equivariant Auslander correspondence] \label{thm:equivariant-Aus-cor}
    There is a one-to-one correspondence between the following two classes of finite $\cC$-module categories (up to $\cC$-module equivalences):
    \begin{enumerate}
        \item \label{item:C-finite} the class of finite $\cC$-module categories $\cM$ that are $\cC$-rep-finite; and
        \item \label{item:C-functor-cat} the class of finite $\cC$-module categories $\cN$ that satisfy $\ipd(\cN) \leq 2 \leq \dom.\dim(\cN)$.
    \end{enumerate}
    The bijection from \ref{item:C-finite} to \ref{item:C-functor-cat} is given by $\cM \mapsto \cN \coloneqq \Fun^{\fp}_\cC(\rightop{\cM},\cC)$, with inverse from \ref{item:C-functor-cat} to \ref{item:C-finite} given by $\cN \mapsto \cM\coloneqq \Cproj{}(\cN)$.
\end{theorem}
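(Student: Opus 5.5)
We show that both assignments land in the stated classes, and that they are mutually inverse up to $\cC$-module equivalence.

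\emph{Step 1 (from \ref{item:C-finite} to \ref{item:C-functor-cat}).} Let $\cM$ be $\cC$-rep-finite and set $\cN \coloneqq \Fun^{\fp}_\cC(\rightop{\cM},\cC)\cong\Fun^{\fp}(\cP\cM^{\op},\vec)$. By the example following \eqref{eqn:rightopPMaction}, $\cN$ is a finite left $\cC$-module category. \cref{thm:functor-cat-Cprojdim} gives $\ipd(\cN)\le 2$.

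For the dominant dimension, observe that $\cP\cM$ is a precluster tilting subcategory with additive generator (\cref{cor:PM-precluster-tilting}). The Auslander--Solberg correspondence \cref{thm:AS-correspondence} therefore gives $\dom.\dim(\cN)\ge 2$, using $\cN\cong \Fun^{\fp}(\cP\cM^{\op},\vec)$. Alternatively, one can argue directly: a projective $\iHom_\cM(-,X)$ with $X\in\cP\cM$ embeds into $\iHom_\cM(-,I_0)$ and then $\iHom_\cM(-,I_1)$, obtained by applying left exact $\iHom_\cM(-,?)$ to an injective coresolution $0\to X\to I_0\to I_1$ in $\cM$. These are projective-injective in $\cN$, and the cokernel condition follows from the dual of \cref{lem:projFunfp}.

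\emph{Step 2 (from \ref{item:C-functor-cat} to \ref{item:C-finite}).} Let $\cN$ satisfy the conditions and set $\cM\coloneqq\Cproj{}(\cN)$. By \cref{cor:Cproj=Gproj}, $\ipd(\cN)\le 2$ gives $\cM=\Gproj(\cN)$ and $\Gpd(\cN)\le 2$. So $\cN$ is minimal Auslander--Gorenstein, and \cref{thm:AS-correspondence} shows that $\Gproj(\cN)$ is a finite abelian category with precluster tilting subcategory $\proj(\cN)$. By \cref{lem:CprojclosedunderC}, $\cM$ is closed under the $\cC$-action; this action is exact in both variables because the abelian structure on $\Gproj(\cN)$ produced by Auslander--Solberg is compatible with the exact endofunctors $X\lact-$. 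We expect this compatibility to be the main obstacle. We would handle it through the realisation $\Gproj(\cN)\simeq\Fun^{\fp}(\proj(\cN)^{\op},\vec)$-style description of \cite{IS_preclustertilting}: there exact sequences in $\cM$ correspond to left exact sequences of representables, and $X\lact -$ preserves these since it is exact on $\cN$ and has exact adjoints.

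We must then identify $\cP\cM$ with $\proj(\cN)$, which gives $\cC$-rep-finiteness since $\cN$ is finite. For the inclusion $\cP\cM\subseteq\proj(\cN)$: if $E\in\cM$ and $P\in\proj(\cC)$, then $P\lact E$ is projective in $\cN$ by \cref{lem:Cproj}. For the reverse inclusion: every projective $R$ of $\cN$ is a summand of $Q\lact R$ with $Q$ the projective cover of $\unit$, and $R\in\cM$. Hence $\cP\cM=\proj(\cN)=\add$ of a single object.

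\emph{Step 3 (mutually inverse).} Starting from $\cM$, \cref{prop:C-projrepresentable} identifies $\Cproj{}(\cN)$ with the essential image of the covariant Yoneda embedding $M\mapsto\iHom_\cM(-,M)$. By \cref{cor:intHom-Yon-Emb} and \cref{rmk:C-equivariant-int-Hom-Yon-embedding}, this embedding is a fully faithful $\cC$-module functor, so it gives a $\cC$-module equivalence $\cM\simeq\Cproj{}(\cN)$.

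Conversely, starting from $\cN$ with $\cM=\Cproj{}(\cN)$, we have $\cP\cM=\proj(\cN)$ by Step 2. Consider the functor $\cN\to\Fun^{\fp}(\proj(\cN)^{\op},\vec)$ given by $Y\mapsto\Hom_\cN(-,Y)|_{\proj(\cN)}$. It is an equivalence because $\cN$ has enough projectives, and it intertwines the $\cC$-action with \eqref{eqn:rightopPMaction} by adjunction $\Hom(R,X\lact Y)\cong\Hom(\leftdual{X}\lact R,Y)$. Composing with \cref{thm:linearisingCfunctor} yields $\cN\simeq\Fun^{\fp}_\cC(\rightop{\cM},\cC)$ as $\cC$-module categories.
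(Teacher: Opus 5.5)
Your outline follows the paper's proof. Step~1 uses \cref{thm:functor-cat-Cprojdim} and \cref{thm:AS-correspondence} as the paper does. Step~2 goes through \cref{cor:Cproj=Gproj} and $\cP\cM=\proj(\cN)$. Step~3 is the same pair of Yoneda-type equivalences.

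The gap is exactly at the step you flag, and your proposed handling does not work as written. The abelian structure on $\cM=\Cproj{}(\cN)=\Gproj(\cN)$ is not inherited from $\cN$, so you must show the restricted action is exact in both variables for \emph{that} structure. There are three problems with your sketch:
\begin{enumerate}
\item The model you invoke, $\Fun^{\fp}(\proj(\cN)^{\op},\vec)$, is equivalent to $\cN$ itself, not to $\Gproj(\cN)$.
\item The criterion ``exact sequences in $\cM$ correspond to left exact sequences of representables'' is only necessary. Left exactness in $\cN$ records that $M_1\to M_2$ is a kernel of $M_2\to M_3$, but not that $M_2\to M_3$ is an epimorphism of $\cM$. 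So exactness of $X\lact-$ on $\cN$ alone does not give right exactness on $\cM$.
\item Exactness in the $\cC$-variable is not addressed.
\end{enumerate}
The paper resolves this with the explicit equivalence $\Gproj(\cN)\simeq\Fun^{\fp}(\proj(\cN)\cap\inj(\cN),\vec)^{\op}$, given by $N\mapsto\Hom_\cN(N,-)|_{\proj(\cN)\cap\inj(\cN)}$. Under it the action becomes $F\mapsto F(\rightdual{X}\lact-)$. Exactness in $F$ is then clear. Exactness in $X$ holds because $0\to\rightdual{X_3}\lact I\to\rightdual{X_2}\lact I\to\rightdual{X_1}\lact I\to 0$ splits for every projective-injective $I$.

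Your remark about adjoints can alternatively be turned into a purely categorical fix. Since $\leftdual{X}\lact-$ and $\rightdual{X}\lact-$ preserve $\cM$ (\cref{lem:CprojclosedunderC}), the adjunctions restrict to the full subcategory $\cM$. So $X\lact-$ is both a left and a right adjoint on $\cM$, and hence exact for whatever abelian structure $\cM$ carries. For the $\cC$-variable, take $0\to X_1\to X_2\to X_3\to 0$ in $\cC$ and $N\in\cM$. The sequence $0\to X_1\lact N\to X_2\lact N\to X_3\lact N\to 0$ is exact in $\cN$. Its first map is an $\cN$-kernel with source in $\cM$, hence a kernel in $\cM$ by fullness. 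Its second map is an $\cN$-epimorphism, hence an $\cM$-epimorphism. In the abelian category $\cM$ this makes the sequence short exact.

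As a minor point, your direct argument for $\dom.\dim(\cN)\geq 2$ asserts without justification that $\iHom_\cM(-,I)$ is injective in $\cN$; this holds, e.g.\ via the Nakayama functor. Your Auslander--Solberg route makes this moot.
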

\begin{proof}
    We begin by showing that the proposed bijections are well-defined maps between the classes of finite $\cC$-module categories.
    Let $\cM$ be a finite $\cC$-module category that is moreover $\cC$-rep-finite.
    Then $\cN\coloneqq \Fun^{\fp}_\cC(\rightop{\cM},\cC) \cong \Fun^{\fp}(\cP\cM^{\op},\vec)$ is indeed a finite abelian category (since $\cP\cM$ has an additive generator by definition of $\cC$-rep-finite).
    It follows from \cref{thm:functor-cat-Cprojdim} that $\ipd(\cN) \leq 2$.
    Moreover, one can show directly that $\dom.\dim(\cN) \geq 2$; this also follows from the well-definedness of the bijection in \cref{thm:AS-correspondence}, noting that $\cP\cM$ is precluster-tilting by \cref{cor:PM-precluster-tilting}.

    Now let $\cN$ be a finite $\cC$-module category satisfying $\ipd(\cN) \leq 2 \leq \dom.\dim(\cN)$.
    By \cref{cor:Cproj=Gproj}, we get that $\Gpd(\cN)=\ipd(\cN) \leq 2$ and $\Cproj{}(\cN) = \Gproj(\cN)$. 
    Thus $\cN$ is minimal Auslander--Gorenstein.
    In particular, we already know from \cref{thm:AS-correspondence} that $\Gproj(\cN)$ is a finite abelian category.
    Precisely, this is because $\Gproj(\cN)$ can be described as follows (see e.g.\ \cite[Lemma 2.1]{Marczinzik_minAusGor}):
    \begin{align*}
    \Gproj(\cN) &= \{ N \in \cN \mid \dom.\dim(N) \geq 2\} \\
        &= \{N \in \cN \mid \exists 0 \to N \to I_0 \to I_1 \text{ exact}: I_0,I_1 \in \proj(\cN)\cap \inj(\cN)\},
    \end{align*}
    which shows that $\Gproj(\cN)$ is a finite abelian category\footnote{Note that the exact sequences in $\Gproj(\cN)$ need not be induced from the exact sequences in $\cN$.} via the following identification (see e.g.\ \cite[II, Proposition 2.5]{ARS_repArtin}):
    \[
    \begin{split}
        \Gproj(\cN) &\xrightarrow{\cong} \cF \coloneqq \Fun^{\fp}(\proj(\cN)\cap\inj(\cN),\vec)^{\op} \\
        N &\mapsto \Hom_\cN(N,-)|_{\proj(\cN)\cap\inj(\cN)}.
    \end{split}
    \]
    Let us show that $\Cproj{}(\cN) = \Gproj(\cN)$ is moreover a $\cC$-module category via the $\cC$-action restricted from $\cN$.
    This will be achieved by showing that it defines a $\cC$-module category structure on $\cF$ through the equivalence above, which is defined by
    \[
    (X \lact F)(-) = F(\rightdual{X}\lact -), \qquad X \in \cC, \quad F\in \cF.
    \]
    (Note that $\rightdual{X}\lact I \in \proj(\cN)\cap\inj(\cN)$ since $\rightdual{X}\lact -$ preserves both projectives and injectives.)
    To this end, let $0\to X_1 \to X_2 \to X_3 \to 0$ be an exact sequence in $\cC$.
    Then the following sequence is exact in $\cF$ for all $F \in \cF$:
    \[
    0 \to F(\rightdual{X_1}\lact -) \to F(\rightdual{X_2}\lact -) \to F(\rightdual{X_3}\lact -) \to 0
    \]
    since for all $I \in \proj(\cN)\cap\inj(\cN)$, $-\lact I$ is exact and $0\to \rightdual{X_3}\lact I \to \rightdual{X_2}\lact I \to \rightdual{X_1}\lact I \to 0$ splits.
    (Exactness of $X\lact - $ on $\cF$ is immediate.)
    Finally, it follows that $\Cproj{}(\cN)=\Gproj(\cN) \cong \cF$ is $\cC$-rep-finite, since $\cP(\Cproj{}(\cN)) = \proj(\cN)$, which must have an additive generator ($\cN$, being a finite abelian category, has only finitely many indecomposable projectives).
    This concludes the proof that the maps are well-defined.

    We now prove that the proposed maps define bijections between the two classes.
    Starting with a $\cC$-rep-finite module category $\cM$, consider the composition
    \[
    \cM \mapsto\Fun^{\fp}_\cC(\rightop{\cM},\cC) \mapsto \Cproj{}(\Fun^{\fp}_\cC(\rightop{\cM},\cC)).
    \]
    By \cref{prop:C-projrepresentable}, the $\iHom_\cM(-,M) \in \Fun^{\fp}_\cC(\rightop{\cM},\cC)$ are exactly the $\cC$-projectives in $\Fun^{\fp}_\cC(\rightop{\cM},\cC)$, and so \cref{cor:intHom-Yon-Emb} gives us an equivalence of abelian categories $\cM \cong \Cproj{}(\Fun^{\fp}_\cC(\rightop{\cM},\cC))$.
    This equivalence is moreover a $\cC$-module equivalence since (see also \cref{rmk:C-equivariant-int-Hom-Yon-embedding})
    \[
    \iHom_\cM(-,X\lact M) \cong X\otimes \iHom_\cM(-,M) = X\lact \iHom_\cM(-,M).
    \]
    On the other hand, suppose $\cN$ satisfies $\ipd(\cN) \leq 2 \leq \dom.\dim(\cN)$ and consider the composition
    \[
    \cN \mapsto \cM\coloneqq \Cproj{}(\cN) \mapsto \Fun^{\fp}_\cC(\rightop{\cM},\cC).
    \]
    Note that it follows by definition of $\cM$ that $\cP\cM = \proj(\cN)$.
    Thus we get that
    \[
    \Fun^{\fp}_\cC(\rightop{\cM},\cC) \cong \Fun^{\fp}(\cP\cM^{\op},\vec) = \Fun^{\fp}(\proj(\cN)^{\op},\vec) \cong \cN;
    \]
    the first $\cC$-module equivalence comes from \cref{thm:linearisingCfunctor} and the final equivalence $\cN \cong \Fun^{\fp}(\proj(\cN)^{\op},\vec)$ as abelian categories is classical (see \cite[Lemma 2.7]{Krause_defcat} or \cite[Lemma 18.1.13]{Prest_Purity-Spectra-local}), sending $N \in \cN \mapsto \Hom_\cN(-,N)|_{\proj(\cN)}$.
    This equivalence does indeed respect the $\cC$-module structures, since
    \[
    \Hom_\cN(-,X\lact N)|_{\proj(\cN)} \cong \Hom_\cN(\leftdual{X}\lact -,N)|_{\proj(\cN)} = X\lact \Hom_\cN(-,N)|_{\proj(\cN)}
    \]
    (see \eqref{eqn:rightopPMaction} for last equality).
    This concludes our proof.
\end{proof}
\begin{remark}[Relations between 3 correspondences]\label{rmk:correspondences-relations} 
While \cref{thm:AS-correspondence} and \cref{thm:equivariant-Aus-cor} both generalise the classical Auslander correspondence, and are given by essentially the same bijections, we note that neither of them is a direct consequence of the other as the class of ``categories with structures'' considered are different.
In particular, we can have two \emph{distinct} $\cC$-module categories $\cM$ and $\cM'$ in \ref{item:C-finite} of \cref{thm:equivariant-Aus-cor}, which nonetheless have the \emph{same underlying finite abelian category} and have $\cP\cM=\cP\cM'$.
In other words, the only difference is in the $\cC$-module category structure.
(One obtains many such examples from semisimple $\cC$.)
In light of this, it would be interesting to find higher analogues of \cref{thm:equivariant-Aus-cor}, as was done in \cite{Iyama_higher-Aus-cor} and \cite{IS_preclustertilting} for the Auslander correspondence and the Auslander--Solberg correspondence respectively.
\end{remark}

\begin{remark}
    We emphasise that a finite abelian category $\cM$ can be $\cC$-rep-finite with respect to different $\cC$-module category structures (even over the same $\cC$).
    For example, in \cref{eg:introA2} we considered $\cC=\mod_{T_2(-1)}$ and studied $\cM=\mod_{\hat{A}}$ equipped with a non-trivial $\cC$-module structure such that $\cP\cM \subsetneq \cM$.
    But $\mod_{\hat{A}}$ can also be equipped with the ``trivial'' $\cC$-module structure obtained via the forgetful functor $\mod_H \to \vec$ (so that $X \lact M \cong M^{\dim_\Bbbk(X)}$), which is still $\cC$-rep-finite since $\mod_{\hat{A}}$ is itself representation-finite.
    The corresponding $\cC$-module categories in \ref{item:C-functor-cat} of \cref{thm:equivariant-Aus-cor} that one obtains are therefore also different.
    In this case even their underlying abelian categories are different: the trivial one simply gives $\mod_{\mathrm{Aus}(\hat{A})^{\op}}$ for $\mathrm{Aus}(\hat{A})$ the (classical) Auslander algebra of $\hat{A}$, whereas the non-trivial one gives $\mod_{\End_\cM(\overline{M})^{\op}}$ with $\overline{M}$ the direct sum of the 3 indecomposables in $\cP\cM$ (\cref{fig:introA2}).
    %(One can show that $\End_\cM(\overline{M}) \cong B_2\otimes_\Bbbk \mathrm{Aus}(A_2)$ is a $2$-Iwanaga--Gorenstein algebra with infinite global projective dimension.)
\end{remark}

\section{\texorpdfstring{$\cC$}{C}-rep-finite module categories from tensor products}\label{sec:example-tensor-alg}
Throughout this section, $\cC$ is a finite tensor category and all $\cC$-module categories will be finite.
\subsection{A principle}
\label{sec:MNA}
Consider a finite tensor category $\cC$ and a finite $\cC$-module category $\cN$. Let $A$ be a finite dimensional $\Bbbk$-algebra. There is a canonical $\cC$-module structure on the Deligne product
\[\cM:=\cN\boxtimes\mod_A.\]
Objects in $\cM$ are given by pairs of an object $Y\in\cN$ and an algebra morphism $A\to\End_{\cN}(Y)$. The action of $X\in\cC$ on such an object results in the object corresponding to $X\lact Y\in\cN$ together with the composite algebra morphism
\begin{equation}\label{actNA}A\to\End_{\cN}(Y)\xrightarrow{X\lact-}\End_{\cN}(X\lact Y).\end{equation}
Note that if we identify $\cN \cong \mod_B$, then $\cM \cong \mod_{B\otimes_{\Bbbk} A}$ (as abelian categories).

We denote by $\proj(\cN)\stackrel{\bullet}{\otimes} \mod_A$ the additive envelope of the ordinary tensor product of $\Bbbk$-linear categories $\proj(\cN){\otimes} \mod_A$ of $\proj(\cN)$ and $\mod_A$. We have an obvious fully faithful $\Bbbk$-linear functor
\begin{equation}\label{eq:PMtensor}\proj(\cN)\otimes \mod_A\to \cP\cM,\end{equation}
that sends $(P,V)$, for indecomposable $P\in \proj(\cN)$ and $V \in \mod_A$ to the object 
\[A\to \End_{\Bbbk}(V)\to \End_{\cN}(P\otimes_{\Bbbk}V).\]
Here the first morphism is action of $A$ on $V$ and the second arrow is given, after taking a basis $e_1,\ldots, e_n$ of $V$, by $\Mat_n(\Bbbk)\to \Mat_n(\End_{\cN}(P))$ which sends every matrix entry $a\in\Bbbk$ to $a\cdot \id_P$.

We call a finite abelian category {\bf uniserial} if every indecomposable module has a unique composition series. Note that a uniserial finite abelian category necessarily has finitely many indecomposable objects (up to isomorphism).

\begin{theorem}\label{thm:uniserial}
     Assume that $\cN$ is exact and uniserial. Then the functor
     \[\proj(\cN)\stackrel{\bullet}{\otimes} \mod_A\;\to\;\cP\cM\]
is an equivalence of categories. In particular, $\cM = \cN\boxtimes \mod_A$ is $\cC$-rep-finite if and only if $A$ is representation finite.
\end{theorem}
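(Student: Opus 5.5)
Fully faithfulness of \eqref{eq:PMtensor} is given, and the additive envelope stays fully faithful because $\cP\cM$ is idempotent complete. So the real content is essential surjectivity: every object of $\cP\cM$ is a summand of some $P\otimes_\Bbbk V$ with $P\in\proj(\cN)$ and $V\in\mod_A$. Every $\cC$-action preserves direct sums, so it suffices to show that $X\lact Z$ lies in the essential image for each $X\in\proj(\cC)$ and each $Z\in\cM$. The image is closed under summands, since it is the additive closure and idempotents split via full faithfulness together with the Krull--Schmidt property of $\proj(\cN)\stackrel{\bullet}{\otimes}\mod_A$.

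\emph{Step 1 (the underlying $\cN$-object).} Write $Z=(Y,\rho\colon A\to\End_\cN(Y))$. By \eqref{actNA}, $X\lact Z=(X\lact Y,\ (X\lact-)\circ\rho)$. Since $\cN$ is exact, $X\lact Y$ is projective in $\cN$, hence also injective ($\cN$ is quasi-Frobenius). So it suffices to prove the following key claim.

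\emph{Key claim.} If $\cN$ is uniserial and quasi-Frobenius, then any object $(W,\rho)$ of $\cM$ with $W\in\proj(\cN)$ is isomorphic to a direct sum $\bigoplus_P P\otimes_\Bbbk V_P$ over indecomposable projectives $P$, with $V_P\in\mod_A$. This gives the equivalence, and we do not even need the specific form of the action.

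\emph{Step 2 (proof of the key claim).} Decompose $W$ according to the isoclasses of indecomposable projective-injectives. A uniserial indecomposable projective $P$ has simple top $S_P$ and simple socle. Consider the functor
\[
V_P\coloneqq\Hom_\cN(P,W)\big/\Hom_\cN(P,\rad\text{-part}),
\]
or more cleanly use the Loewy length. Order the indecomposable projectives by length, and let $\ell$ be the maximal length occurring in $W$. Then the summands of length $\ell$ are detected $A$-equivariantly: the subobject $\rad^{\ell-1}W$, or dually the quotient by $\mathrm{soc}$-layers, is preserved by all endomorphisms, and so is $A$-stable.

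For uniserial modules of maximal length, the inclusion $P\hookrightarrow W$ can be chosen compatibly with $A$. Concretely, set $U\coloneqq\Hom_\cN(P,W)$ restricted to maps not factoring through $\rad$-lower pieces. This is a right $\End(P)$-module and a left $A$-module, and one obtains an $A$-equivariant map $P\otimes_\Bbbk V\to W$ that is split mono. Its splitting can be made $A$-equivariant because $P\otimes V$ is injective *and* the map is a monomorphism in the $\End$-sense: injectivity of $P$ in $\cN$ gives a splitting in $\cN$, and averaging is impossible, so instead one uses that $\End_\cN(P)$ is local with radical acting nilpotently. This lets the $\Hom$-decomposition $W\cong\bigoplus P_i\otimes \Hom(P_i,W)/\text{rad}$ be realised by $A$-equivariant isomorphisms.

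Then induct on length, passing to the $A$-stable complement.

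\emph{Main obstacle.} The hard step is making the splittings $A$-equivariant. I would try first to show that for uniserial projective-injectives, every endomorphism of $\bigoplus P_i^{n_i}$ is a "matrix over $\End(P_i)$" and that $\End(P)$ is a quotient of $\Bbbk[t]$. Then $(W,\rho)$ becomes a module over $A\otimes\Bbbk[t]/t^m$ that is free over $\Bbbk[t]/t^m$ on the top-length part, and free modules over a self-injective local algebra split off $A$-equivariantly.

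\emph{Step 3 (the rep-finiteness criterion).} Indecomposables of $\proj(\cN)\stackrel{\bullet}{\otimes}\mod_A$ are exactly the $P\otimes V$ with $P,V$ indecomposable, because $\End(P\otimes V)=\End(P)\otimes\End(V)$ is local when both factors are local, $\Bbbk$ being algebraically closed. Since $\proj(\cN)$ has finitely many indecomposables and is nonzero, $\cM$ is $\cC$-rep-finite if and only if $A$ is representation-finite.
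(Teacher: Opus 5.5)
\paragraph{The Key claim is false.} Your Step~1 reduction discards exactly the information the theorem depends on, and the Key claim you reduce to does not hold. Take $\cN=\mod_{B_2}$ with $B_2=\Bbbk[\varepsilon]/\langle\varepsilon^2\rangle$. This is uniserial and quasi-Frobenius, which is all your claim assumes, and it is exact over $\mod_{T_2(-1)}$ when $\mathrm{char}\,\Bbbk\neq 2$. Take $A=\Bbbk[s]/\langle s^2\rangle$.

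\begin{itemize}
\item The objects $(B_2,\ s\mapsto\lambda\varepsilon)$, $\lambda\in\Bbbk$, have projective underlying object and are indecomposable.
\item They are pairwise non-isomorphic. An isomorphism would be a unit of the commutative ring $\End_\cN(B_2)=B_2$ conjugating $\lambda\varepsilon$ to $\lambda'\varepsilon$, which forces $\lambda=\lambda'$.
\item By contrast, $\proj(\cN)\stackrel{\bullet}{\otimes}\mod_A$ has only two indecomposables, $B_2\otimes_\Bbbk\Bbbk$ and $B_2\otimes_\Bbbk A$, and on both $s$ acts by scalar matrices.
\end{itemize}

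The paper's own \cref{eg:introA2} shows the same thing. There $B_2\xrightarrow{\varepsilon}B_2$ is locally free, but it is not of the form $B_2\otimes_\Bbbk V$ and is not in $\cP\cM$ (\cref{fig:introA2}).

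This also rules out the mechanism in your Step~2 and ``Main obstacle''. $B_2$ with $s$ acting by $\varepsilon$ is free over the self-injective local algebra $B_2$. Yet it admits no $A$-equivariant isomorphism with any $B_2\otimes_\Bbbk V$. So no choice of splittings, Loewy filtrations or nilpotency argument can produce one.

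\paragraph{What is missing.} You must use that the $A$-action on $X\lact Z$ is $(X\lact-)\circ\rho$, i.e.\ that it factors through $X\lact-\colon\End_\cN(Y)\to\End_\cN(X\lact Y)$. Your remark that ``we do not even need the specific form of the action'' is exactly where the argument breaks.

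Exactness alone makes every $X\lact f$ a split morphism. Hence $(X\lact-)\circ\rho$ takes values in regular elements (\cref{lem:PMDTP}). This excludes the examples above, but it does not by itself force $\rho$ to be conjugate to a scalar-valued map (see the remark after \cref{lem:PMDTP}).

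The paper uses uniseriality to get the stronger statement, as follows.
\begin{itemize}
\item Fix $Q\in\proj(\cC)$. By induction on Loewy length, choose for each indecomposable $N$ a decomposition of $Q\lact N$ into the projectives $Q\lact S_i$ indexed by the composition factors of $N$.
\item Make these choices compatible with the chosen radical and socle sequences, which split after applying $Q\lact-$.
\item Every morphism between indecomposables of $\cN$ is a linear combination of composites of these elementary inclusions and projections. So every $Q\lact f$ becomes a matrix of scalar multiples of identity morphisms.
\item Hence $(Q\lact-)\circ\rho$ lands in scalar matrices, which is exactly the essential image of the functor.
\end{itemize}

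Your Step~3 is fine once the equivalence is established.
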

\begin{proof}
By assumption, $\cN$ has finitely many indecomposable objects, which are all uniserial. Fix a projective $Q\in\cC$. For every simple object $S_i\in\cN$, we have a projective object $P_i:=Q\lact S_i\in\cN$. 

Now we will choose `fixed decompositions' of objects $Q\lact N\in\proj(\cN)$, for all indecomposable $N\in\cN$. For this we apply induction on the length (=Loewy length) of $N$. For the simple objects, we simply write $Q\lact S_i=P_i$ (even though this need not be indecomposable). For any indecomposable object $N$ of length two, we choose a short exact sequence
\[0\to S_i\to N\to S_j\to 0\]
and then a splitting of the short exact sequence, obtained via $Q\lact-$,
\[0\to P_i\to Q\lact N\to P_j\to 0,\]
giving our desired decomposition $Q\lact N=P_i\oplus P_j$. Note that for the splitting, if we make an arbitrary choice, the freedom in picking an alternative decomposition corresponds to picking a morphism $P_j\to P_i$. Now for a length three indecomposable $N\in\cN$, we choose short exact sequences
\[0\to S_a \to N\to N'\to 0\quad\mbox{and}\quad 0\to N''\to N\to S_c\to 0. \]
Now we choose a decomposition $Q\lact N=P_a\oplus P_b\oplus P_c$, where $S_b$ is the third simple constituent of $N$, that satisfies two conditions. Firstly, the inclusion $Q\lact N''\hookrightarrow Q\lact N$ identifies the chosen decomposition of $N''$ from the previous step with $P_a\oplus P_b$, and secondly the epimorphism $Q\lact N\twoheadrightarrow Q\lact N'$ corresponds to the projection onto $P_b\oplus P_c$. This can be done: assuming the first condition, the choice of the third summand $P_c$ corresponds to picking morphisms $P_c\to P_a$ and $P_c\to P_b$. The first one can be chosen arbitrarily, while the second one must and can be chosen to comply with our choice for $N'$.

This procedure can be repeated for higher lengths, by choosing short exact sequences corresponding to the radical and socle.

The end result is that, for an arbitrary $N\in\cN$, we have a choice of decomposition of $Q\lact N$ into (not necessarily indecomposable) projectives in $\cN$, such that for any morphism $N_1\to N_2$ (which we can view as a linear combination of compositions of the elementary projections and inclusions that we used in our construction of the decompositions), the induced morphism $Q\lact N_1\to Q\lact N_2$ takes the form of a matrix with entries that are scalar multiples of the identity morphisms of the projective objects $P_a$ into which we decompose. It now follows from the construction in \eqref{actNA}, that the action of $Q\lact-$ on an object $A\to\End_{\cN}(N)$ gives an object in the essential image of the functor in the theorem. In other words, every object of the form $Q\lact M$ lies in the essential image of the fully faithful functor
\[\proj(\cN)\otimes \mod_A\;\to\;\cP\cM,\]
so that taking the idempotent completion on both sides yields an equivalence.
\end{proof}

\begin{remark}
    Since $\cN$ in Theorem~\ref{thm:uniserial} is exact (as a module category) and uniserial, as an abelian category it must be of the form $\mod_B$ for a self-injective Nakayama algebra $B$. Examples of exact module categories with description of the corresponding self-injective Nakayama algebras are given in \cref{eg:exactmod-Taft}. Of course, most exact module categories will not be uniserial; for example the category of modules over small quantum groups (which is exact over itself) is not uniserial.
\end{remark}

%%%%%%%%%%%%%%%%%%%%%%%%%%%%%%%%%%%%%%%%%%%%%%%%%%%%%%%%
% Old corollary about tensor product algebras
%%%%%%%%%%%%%%%%%%%%%%%%%%%%%%%%%%%%%%%%%%%%%%%%%%%%%%%%
\iffalse
\begin{corollary}\label{cor:tensor-Nakayama-rep-fin}
    Let $A$ be a representation-finite algebra and $B$ a self-injective Nakayama algebra.
    Suppose further that $\mod_B$ is an exact module category over some finite tensor category $\cC$.
    Then $\mod_{B \otimes_\Bbbk A}$ has finite finitistic dimension.
\end{corollary}
\begin{proof}
    Set $\cN \coloneqq \mod_B$ and suppose $\cN$ is an exact module category over $\cC$. 
    By \cref{thm:uniserial}, we get that $\cM \coloneqq  \cN\boxtimes\mod_A \cong \mod_{B\otimes_\Bbbk A}$ is $\cC$-module category which is $\cC$-rep-finite.
    It follows from \cref{thm:PM-computes-findim} that $\cM$ has finite finitistic dimension.
\end{proof}
\fi

\begin{example}\label{eg:tensorproduct}
    Suppose $\mathrm{char}(\Bbbk)=0$. 
    Let $\wt{A}^c_{\ell-1}$ be the cyclic quiver with $\ell$ vertices and $R$ the arrow ideal.
    It is known that $Z(\ell,k)\coloneqq \Bbbk\wt{A}^c_{\ell-1}/R^k$ is a self-injective Nakayama algebra for all positive integers $\ell$ and $k\geq 2$, and all connected, non-semisimple self-injective Nakayama algebras are of this form (up to Morita equivalence) \cite[Proposition 3.8]{ASS_elerep}.
    When $\ell$ is a divisor of $k$, $Z(\ell,k)$ is the algebra $B_k(\ell)$ defined in \cref{eg:exactmod-Taft}, where we saw that $\mod_{B_k(\ell)}$ is an exact module category over $\cC\coloneqq \mod_{T_k(\zeta)}$ for $T_k(\zeta)$ the Taft algebra from \cref{eg:Taft}.
    On the other hand, when $k$ is a divisor of $\ell$, then $Z(\ell,k)$ can be equipped with the structure of a Hopf algebra; in fact, these are exactly all of the connected self-injective Nakayama algebras that can be equipped with the structure of a Hopf algebra \cite[Theorem 5.1]{CHYZ_monomial-Hopf}.
    Then for any representation-finite algebra $A$ and $Z(\ell,k)$ such that $\ell \mid k$ or $k \mid \ell$, we get from \cref{thm:uniserial} that $\mod_{Z(\ell,k) \otimes_\Bbbk A}$ is $\cC$-rep-finite.
    Note that $Z(\ell,k)\otimes_\Bbbk A$ can be representation-infinite; this is already the case for $Z(1,2) \otimes_\Bbbk Z(1,2) \cong \Bbbk[x]/\<x^2\>\otimes_\Bbbk\Bbbk[x]/\<x^2\>$.
\end{example}

\begin{remark}
    Let $\cN \cong \mod_B$ and $A$ be as in \cref{thm:uniserial}.
    While the theorem also implies that the tensor product algebra $B \otimes_\Bbbk A$ has finite finitistic dimension when $A$ is representation-finite, it seems to be well-known among the experts that the finitistic dimension is additive under tensor product.
    This is commonly attributed to an unpublished result of Rickard; see also \cite[Theorem 16]{ERZ_dimension-tensor-product}.
    In particular, under our assumption the finitistic dimension of $B \otimes_\Bbbk A$ is simply the finitistic dimension of $A$.
\end{remark}

\begin{remark}
    In light of \cref{eg:tensorproduct}, it is interesting to find out whether the remaining cases of $\mod_{Z(\ell,k)}$ (i.e.\ $\ell \nmid k$ and $k \nmid \ell$) can be equipped with the structure of an \emph{exact} $\cC$-module category for some finite tensor category $\cC$. This would show that the tensor product of any self-injective Nakayama algebra with a representation-finite algebra is $\cC$-rep-finite.
\end{remark}

\subsection{The general case}

We do not know whether Theorem~\ref{thm:uniserial} holds without the uniserial assumption. We briefly discuss what can be said in general.

We call an element $r$ of a ring $R$ (von Neumann) {\bf regular} if there exists $s\in R$  
with $rsr=r$. We will freely use that non-zero regular elements never belong to the Jacobson radical of $R$ (because $rs$ is an idempotent in the notation above).

\begin{remark}
\begin{enumerate}
    \item A ring is von Neumann regular if every element is regular. 
    \item A morphism $f:U\to V$ in an additive category is usually called {\bf split} if it is regular inside $\End(U\oplus V)$. If the category is idempotent closed, this is equivalent to there being decompositions $U=U'\oplus U''$ and $V=V'\oplus V''$ such that $f$ is given by the composite $U\twoheadrightarrow U' \to V'\hookrightarrow V$, for an isomorphism $U'\to V'$.

\end{enumerate}

\end{remark}

Consider again the set-up in Section~\ref{sec:MNA}. That is we construct the $\cC$-module category $\cM\coloneqq \cN\boxtimes\mod_A$ with $\cN$ an exact $\cC$-module category.
\begin{lemma}\label{lem:PMDTP} 
    Every object in $\cP\cM$ is given by an algebra morphism
    \[A\to\End_{\cN}(Q),\]
    where $Q\in \proj(\cN)$ and the morphism takes values in the subset of regular elements of $\End_{\cN}(Q)$.
\end{lemma}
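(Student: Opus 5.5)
Since $\cN$ is exact, for every $P\in\proj(\cC)$ and every $M=(Y,\rho:A\to\End_\cN(Y))\in\cM$, the underlying object $P\lact Y$ lies in $\proj(\cN)$. By \eqref{actNA}, $P\lact M$ is the algebra morphism $A\to\End_\cN(P\lact Y)$ sending $a\mapsto P\lact\rho(a)$. A general object of $\cP\cM$ is a direct summand of such an object, so it corresponds to an idempotent $e\in\End_\cM(P\lact M)$, i.e.\ an idempotent of $\End_\cN(P\lact Y)$ commuting with all $P\lact\rho(a)$. The summand is then $A\to\End_\cN(e(P\lact Y))$, $a\mapsto e(P\lact\rho(a))e$. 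The underlying object is a summand of a projective, hence lies in $\proj(\cN)$. It therefore remains to show regularity, in two steps: first for elements $P\lact\rho(a)$, then passing to corners $e(-)e$.

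For the first step, I will show that for any morphism $f:Y\to Y'$ in $\cN$ and $P\in\proj(\cC)$, the morphism $P\lact f$ is split. Factor $f=\iota\circ\pi$ with $\pi:Y\twoheadrightarrow I$ epi and $\iota:I\hookrightarrow Y'$ mono. Applying the exact functor $P\lact-$ gives an epimorphism $P\lact\pi$ onto $P\lact I$, which is projective because $\cN$ is exact, so $P\lact\pi$ is split epi. Dually, $P\lact\iota$ is a monomorphism from $P\lact I$, which is injective because $\proj(\cN)=\inj(\cN)$ for an exact module category (it is quasi-Frobenius), so $P\lact\iota$ is split mono. Choose a section $s$ of $P\lact\pi$ and a retraction $r$ of $P\lact\iota$. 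Then $g:=s\circ r$ satisfies
\[(P\lact f)\,g\,(P\lact f)=(P\lact\iota)(P\lact\pi)\,s\,r\,(P\lact\iota)(P\lact\pi)=P\lact f.\]
Taking $Y'=Y$ and $f=\rho(a)$ shows that $P\lact\rho(a)$ is regular in $\End_\cN(P\lact Y)$.

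For the second step, let $x=P\lact\rho(a)$, so that $ex=xe$, and pick $y$ with $xyx=x$. Put $y'=eye$. Then
\[(exe)\,y'\,(exe)=e\,x\,e\,y\,e\,x\,e=e\,x\,y\,x\,e=exe,\]
using that $e$ commutes with $x$ and is idempotent. So $exe$ is regular in the corner ring $e\End_\cN(P\lact Y)e\cong\End_\cN(e(P\lact Y))$.

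Finally I check that isomorphisms preserve the statement. An object isomorphic in $\cM$ to such a summand has underlying object isomorphic to $e(P\lact Y)$, and conjugation by an isomorphism preserves both projectivity and regularity. The step needing most care is the first: it uses $\proj(\cN)=\inj(\cN)$ to split the monomorphism part, and this is where exactness of $\cN$ enters essentially.
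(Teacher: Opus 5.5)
Your proposal is correct and follows essentially the same route as the paper. First, $P\lact f$ is split because $\cN$ is exact: the paper applies $P\lact-$ to the four-term sequence $0\to\ker f\to X\to Y\to\coker f\to 0$, which is equivalent to your epi--mono factorisation. Second, regularity passes to direct summands: your corner-ring computation with $eye$ is exactly the direction needed of the paper's remark that a block-diagonal element $(x,y)\in\End_\cN(Q_1)\times\End_\cN(Q_2)$ is regular there if and only if it is regular in $\End_\cN(Q)$.
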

\begin{proof}
    By assumption $\cN$ is exact, so that for any $P\in\proj(\cC)$ and $X\in\cN$, it follows that $P\lact X$ is projective. By considering the four-term exact sequence corresponding to an arbitrary morphism $f$ in $\cN$, it thus follows that $P\lact f$ is a split morphism between projective objects.
    
    Consequently, the action of $P$ on an object $A\to\End_{\cN}(Y)$ in $\cM$ produces an object $A\to\End_{\cN}(P\lact Y)$, where the morphism takes values in regular morphisms.

    Hence every object in $\cP\cM$ is a direct summand of an object represented by a morphism $\rho:A\to\End_\cN(Q)$ taking values in regular elements. For such a direct summand we have decomposition $Q=Q_1\oplus Q_2$ in $\cN$, such that $\rho$ takes values in the subalgebra $\End_\cN(Q_1)\times\End_\cN(Q_2)$. It follows from the definition that an element $(x,y)$ in the latter algebra is regular if and only if it is regular as an element in $\End_\cN(Q)$. This concludes the proof.
\end{proof}

\begin{remark}
    The condition in Lemma~\ref{lem:PMDTP} is very strong, for instance for any element of $A$, not only its image must be regular, but any polynomial in it must be regular. It implies in particular that the kernel of $A\to\End_{\cN}(Q)$ equals the kernel of the composite
    \[A\to\End_{\cN}(Q)\to\End_{\cN}(Q)/J.\]
    If $A$ is semisimple, then the Wedderburn--Malcev Theorem implies that the morphism is conjugate to one that corresponds to an object in $\proj(\cN)\stackrel{\bullet}{\otimes}\mod_A$. However, in general, the fact that an algebra morphism takes values in the set of regular elements does not mean it must be conjugate to a morphism taking values in a chosen (via Wedderburn's Theorem) semisimple complement of the Jacobson radical. This implies that the hypothetical generalisation of Theorem~\ref{thm:uniserial} does not follow from Lemma~\ref{lem:PMDTP}.
\end{remark}

\section{Species and 1-Iwanaga--Gorenstein algebras}\label{sec:species}
Throughout this section, $\cC$ is a finite tensor category and all $\cC$-module categories will be finite.
\subsection{Species and tensor algebras over finite tensor categories}

\begin{definition}
    Let $\cC$ be a finite tensor category. A \emph{$\cC$-species} $(Q, \cM_\bullet, \cF_\bullet)$ consists of the following datum:
    \begin{enumerate}
        \item A finite, acyclic quiver $Q = (Q_0, Q_1)$;
        \item An exact finite $\cC$-module category $\cM_i$ for each vertex $i \in Q_0$;
        \item A $\cC$-module functor\footnote{Since $\cM_i$ are all exact finite $\cC$-module categories, all $\cC$-module functors $\cF_\alpha$ are automatically exact \cite[Proposition 7.6.9 \& 7.9.7]{EGNO} and have (exact) left and right adjoints \cite[Lemma 7.11.4]{EGNO}.} $\cF_\alpha:\cM_{s(\alpha)}\to\cM_{t(\alpha)}$ for each arrow $\alpha:s(\alpha)\to t(\alpha) \in Q_1$.
    \end{enumerate}
    A \emph{representation} $V_\bullet$ of the $\cC$-species $(Q, \cM_\bullet, \cF_\bullet)$ consists of
    \begin{enumerate}
        \item an object $V_i \in \cM_i$ for each vertex $i \in Q_0$; and
        \item a morphism $V_\alpha:\cF_\alpha(V_{s(\alpha)}) \to V_{t(\alpha)}$ in $\cM_{t(\alpha)}$ for each arrow $\alpha \in Q_1$.
    \end{enumerate}
    A \emph{morphism} $f_\bullet:V_\bullet \to W_\bullet$ of representations consists of a morphism $f_i:V_i \to W_i$ for each vertex $i \in Q_0$, such that the following diagram commutes for each arrow $\alpha \in Q_1$:
    \[
    \begin{tikzcd}
        \cF_{\alpha}(V_{s(\alpha)}) \ar[r, "V_{\alpha}"]\ar[d,"\cF_{\alpha}(f_{s(\alpha)})"] & V_{t(\alpha)}\ar[d,"f_{t(\alpha)}"] \\
        \cF_{\alpha}(W_{s(\alpha)}) \ar[r, "W_{\alpha}"] & W_{t(\alpha)}
    \end{tikzcd}
    \]
    The category of representations of the $\cC$-species $(Q, \cM_\bullet, \cF_\bullet)$ is denoted by $\rep(Q, \cM_\bullet, \cF_\bullet)$.
\end{definition}
In the special case where we fix $\cM = \cM_i$ for all $i$ and $\cF_\alpha$ are all identity functors, then $\rep(Q,\cM_\bullet, \cF_\bullet)$ is the same as the category of representations of $Q$ valued in $\cM$ (see also \cref{eg:speciesA2}).

More generally, a $\cC$-species consists of an underlying \textbf{pro-species} as defined in \cite{Kulshammer_prospecies}, equipped with a `$\cC$-equivariant structure'. 
Precisely, each $\cM_i$, being exact, is equivalent to $\mod_{A_i}$ for some self-injective algebra $A_i$ (i.e.\ 0-Iwanaga--Gorenstein).
Each functor $\cF_\alpha$, being a $\cC$-module functor between exact $\cC$-module categories, must have exact left and right adjoints, hence can be realised as a bimodule which is projective from both sides.
We leave it to the reader to check the following, which essentially follows from the respective definitions of representations:
\begin{lemma}
    A representation of a $\cC$-species is the same as a representation of the underlying pro-species; the respective notions of morphisms between representations also agree. 
    In particular $\rep(Q,\cM_\bullet,\cF_\bullet)$ is a finite abelian category, with kernels and cokernels computed vertex-wise.
\end{lemma}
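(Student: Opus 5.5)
The plan is to make the identification explicit by realising the underlying pro-species as a tensor algebra, and then to read off the abelian structure vertex-wise. First, using that each $\cM_i$ is exact and finite, I would fix equivalences $\cM_i\cong\mod_{A_i}$ with $A_i$ self-injective finite dimensional; this holds because exact module categories are quasi-Frobenius, as recalled after \cref{defn:modulecat}. Each exact functor $\cF_\alpha$ has exact left and right adjoints, so by Eilenberg--Watts it is naturally isomorphic to $M_\alpha\otimes_{A_{s(\alpha)}}-$. Here $M_\alpha$ is the finite dimensional $A_{t(\alpha)}$--$A_{s(\alpha)}$-bimodule $\cF_\alpha(A_{s(\alpha)})$. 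Exactness of $\cF_\alpha$ gives projectivity of $M_\alpha$ over $A_{s(\alpha)}$, and existence of an exact left adjoint gives projectivity over $A_{t(\alpha)}$. This datum $(Q,(A_i),(M_\alpha))$ is exactly the underlying pro-species in the sense of \cite{Kulshammer_prospecies}; the $\cC$-module structure on $\cF_\alpha$ is simply forgotten.

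Second, I would compare the two notions of representation directly. A representation of the pro-species is a family of $A_i$-modules $V_i$ together with $A_{t(\alpha)}$-linear maps $M_\alpha\otimes_{A_{s(\alpha)}}V_{s(\alpha)}\to V_{t(\alpha)}$. Transporting along the fixed equivalences and the natural isomorphisms $\cF_\alpha\cong M_\alpha\otimes-$ turns this into precisely the data $(V_i,V_\alpha)$ of a $\cC$-species representation. Naturality of these isomorphisms turns the commuting squares defining morphisms into one another. This gives an equivalence of categories (an isomorphism once choices are fixed). Equivalently, both categories are $\mod_T$ for the tensor algebra $T=T_{\prod A_i}(\bigoplus_\alpha M_\alpha)$, which is finite dimensional because $Q$ is acyclic.

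Third, for the abelian structure I would argue directly on the $\cC$-species side, so that no algebra is needed. Given a morphism $f_\bullet$, take $\ker f_i$ and $\operatorname{coker} f_i$ in each $\cM_i$. Since $\cF_\alpha$ is exact, $\cF_\alpha(\ker f_{s(\alpha)})=\ker\cF_\alpha(f_{s(\alpha)})$, so the structure map $V_\alpha$ restricts to kernels; dually it descends to cokernels. One checks the universal properties vertex-wise, and image equals coimage because it does so in each $\cM_i$. Hom-finiteness and finite length are inherited from the vertices, and the simples are the simples of the $\cM_i$ placed at single vertices, using acyclicity of $Q$. Enough projectives follows from the identification with $\mod_T$, which finishes the proof of finiteness.

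The main obstacle is bookkeeping rather than mathematics: making sure the Eilenberg--Watts identification is compatible with composition of morphisms, so that the comparison really is a functor. Two-sided projectivity of $M_\alpha$ is not even needed for this lemma. It only matters later, for homological statements such as the 1-Iwanaga--Gorenstein property.
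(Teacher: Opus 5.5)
Your proposal is correct and follows the route the paper indicates. The paper leaves the check to the reader, after noting that each $\cM_i\cong\mod_{A_i}$ with $A_i$ self-injective and that each $\cF_\alpha$ is given by a bimodule projective on both sides; your vertex-wise kernel/cokernel argument and the identification with modules over the finite-dimensional tensor algebra supply the omitted details.

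Two small precisions. First, projectivity of $M_\alpha$ over $A_{t(\alpha)}$ follows most directly from exactness of the right adjoint of $\cF_\alpha$. It also follows from the exact left adjoint together with $\proj=\inj$ in the quasi-Frobenius category $\cM_{t(\alpha)}$, but the left adjoint alone does not give it. Second, two-sided projectivity is needed for the datum to be a pro-species in K\"ulshammer's sense, so the statement requires it to make sense, even though the comparison of representation categories does not use it.
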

However, the category of representations of a $\cC$-species is naturally equipped with the extra structure of a $\cC$-module category:
\begin{proposition}\label{prop:species-1IG}
    The category $\rep(Q, \cM_\bullet, \cF_\bullet)$ of representations of a $\cC$-species is a finite $\cC$-module category with \[\ipd(\rep(Q, \cM_\bullet, \cF_\bullet)) \leq 1.\]
    In particular, Gorenstein projectives in $\rep(Q, \cM_\bullet, \cF_\bullet)$ are exactly the $\cC$-projectives and $\rep(Q, \cM_\bullet, \cF_\bullet)$ is $1$-Iwanaga--Gorenstein.
\end{proposition}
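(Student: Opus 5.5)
First I would put the $\cC$-module structure on $\cR\coloneqq\rep(Q,\cM_\bullet,\cF_\bullet)$ vertex-wise: for $X\in\cC$ and a representation $V_\bullet$, set $(X\lact V)_i\coloneqq X\lact V_i$, and for each arrow $\alpha$ let $(X\lact V)_\alpha$ be the composite
\[
\cF_\alpha(X\lact V_{s(\alpha)})\xrightarrow{s_{X,V_{s(\alpha)}}}X\lact\cF_\alpha(V_{s(\alpha)})\xrightarrow{X\lact V_\alpha}X\lact V_{t(\alpha)},
\]
using the module functor structure $s$ of $\cF_\alpha$. The associativity and unit constraints are inherited vertex-wise, and the coherence of $s$ makes them morphisms of representations. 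Exactness in both variables holds because kernels and cokernels in $\cR$ are computed vertex-wise and each $-\lact-$ on $\cM_i$ is biexact. Finiteness of $\cR$ is the preceding lemma.

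For the bound $\ipd(\cR)\le1$, I would use \cref{prop:Cprojdimprojdim}: it is enough to show $\pd(P\lact V)\le1$ for one nonzero $P\in\proj(\cC)$ and every $V\in\cR$. Since each $\cM_i$ is exact, $P\lact V$ has all vertex components $P\lact V_i$ projective in $\cM_i$. So it suffices to prove that any representation $W$ with every $W_i\in\proj(\cM_i)$ has $\pd(W)\le1$. For this I would use the standard resolution for (pro-)species/tensor algebras. Let $\iota_i\colon\cM_i\to\cR$ be the left adjoint of evaluation $W\mapsto W_i$. Concretely, $\iota_i(Y)$ is given at $j$ by $\bigoplus_{p\colon i\to j}\cF_p(Y)$, a finite sum over paths because $Q$ is acyclic, with $\cF_p$ the composite of the $\cF_\alpha$ along $p$. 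By adjunction, $\iota_i$ sends projectives to projectives, since evaluation is exact. Then there is a short exact sequence
\[
0\to\bigoplus_{\alpha\in Q_1}\iota_{t(\alpha)}\bigl(\cF_\alpha(W_{s(\alpha)})\bigr)\to\bigoplus_{i\in Q_0}\iota_i(W_i)\to W\to0,
\]
with the usual map $\iota_{t(\alpha)}\cF_\alpha\Rightarrow$ built from $\alpha$ minus the structure map $W_\alpha$. Exactness can be checked vertex-wise, exactly as in the classical path-algebra proof. When the $W_i$ are projective, the left term is also projective, because $\cF_\alpha$ preserves projectives: it has an exact right adjoint, as noted in the footnote to the definition of $\cC$-species. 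Hence $\pd(W)\le1$.

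Finally, \cref{cor:Cproj=Gproj} with $d\le1$ gives $\Cproj{}(\cR)=\Gproj(\cR)$ and shows that $\cR$ is $1$-Iwanaga--Gorenstein.

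The step I expect to be the main obstacle is verifying exactness of the standard sequence in this pro-species generality. Surjectivity and the fact that the composite is zero are formal. Injectivity and exactness in the middle would need the path-length filtration argument, adapted to the functors $\cF_\alpha$ in place of vector spaces. Acyclicity of $Q$ lets me induct on path length. The module-structure coherence, by contrast, is routine.
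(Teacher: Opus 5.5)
Your proposal is correct and follows the paper's proof: the same vertex-wise $\cC$-action, the reduction via \cref{prop:Cprojdimprojdim} to showing $\pd(P\lact V)\le 1$, vertex-wise projectivity of $P\lact V$ from exactness of the $\cM_i$, and \cref{cor:Cproj=Gproj} at the end. The only difference is that the paper cites \cite[Corollary 3.3]{Kulshammer_prospecies} for ``vertex-wise projective implies $\pd\le 1$'', whereas you reprove it with the standard resolution; your argument works, because at each vertex the left map is unitriangular with respect to path length, so it is split injective with cokernel $W_j$, and $\cF_\alpha$ preserves projectives since it has an exact right adjoint.
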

\begin{proof}
    The $\cC$-module structure is given as follows: for each $X \in \cC$ and $V_\bullet \in \rep(Q, \cM_\bullet, \cF_\bullet)$, we define
    \begin{enumerate}
        \item $(X\lact V_\bullet)_i \coloneqq X\lact (V_i)$; and
        \item $(X\lact V_\bullet)_\alpha \coloneqq \cF_\alpha(X\lact V_{s(\alpha)}) \xrightarrow{\cong} X\lact \cF_\alpha(V_{s(\alpha)}) \xrightarrow{X\lact V_{\alpha}} X\lact V_{t(\alpha)}$,
    \end{enumerate}
    where the first isomorphism is given by the natural isomorphism that is part of the $\cC$-module functor structure of $\cF$.
    For each $f_\bullet: V_\bullet \to W_\bullet$, the morphism $X\lact f_\bullet$ is simply defined vertex-wise: $(X\lact f_\bullet)_i = X\lact (f_i)$, where it follows that this is a morphism of representations using again the natural isomorphism defining the $\cC$-module functor structure of $\cF$ and functoriality of $X\lact -$ on each $\cM_i$.
    It is straightforward to verify that $-\lact V_\bullet$ is indeed an exact functor ($X\lact -$ being exact is immediate due to the existence of left and right duals in $\cC$).

    Let $V \in \rep(Q,\cM_\bullet,\cF_\bullet)$ be an arbitrary representation.
    Now under this $\cC$-module structure, for all $P\neq 0 \in \proj(\cC)$, $W \coloneqq P\lact V$ satisfies $W_i \in \proj(\cM_i)$ for all $i \in Q_0$, due to the exact $\cC$-module category structure of $\cM_i$.
    By \cite[Corollary 3.3]{Kulshammer_prospecies}, $\pd(W) \leq 1$, and it follows from \cref{prop:Cprojdimprojdim} that $\ipd(V) = \pd(W) \leq 1$.
    Since $V$ was arbitrary, we get $\ipd(\rep(Q,\cM_\bullet,\cF_\bullet)) \leq 1$ as required.
    The final statement is simply a consequence of \cref{cor:Cproj=Gproj}.
\end{proof}

\begin{example}\label{eg:speciesA2}
    Let $\cC=\mod_{T_2(-1)}$ be the finite tensor category associated to the Taft algebra from \cref{eg:Taft}, and recall that $\mod_{B_2(1)}$ is an exact $\cC$-module category from \cref{eg:exactmod-Taft} ($B_2(1) \cong  \Bbbk[\varepsilon]/\<\varepsilon^2\>$).
    Consider the $\cC$-species with $Q = \bullet \longrightarrow \bullet$ ($A_2$ quiver), $\cM_1=\cM_2 = \mod_{B_2(1)}$ and $\cF_\alpha =\id$ is the identity functor.
    Then one sees that $\mod_{\hat{A}}$ from \cref{eg:introA2} is equivalent to $\rep(Q,\cM_\bullet, \cF_\bullet)$, hence is a $1$-Iwanaga--Gorenstein $\cC$-module category by \cref{prop:species-1IG}.
\end{example}

\begin{remark}
    Our notion of a $\cC$-species is a ``categorical version'' of a (quasi-)species in $\cC$ defined in \cite[\S3]{HS_species} in terms of algebras and bimodule objects \emph{in $\cC$} (not finite dimensional algebras) -- each $\cM_i$ corresponds to the category of modules in $\cC$ over the semisimple\footnote{A \emph{simple algebra} in $\cC$ means it has no non-trivial two-sided ideal, and a \emph{semisimple algebra} is by definition a product of simple algebras. The category of modules $\mod_A(\cC)$ over an algebra $A$ in $\cC$ is a $\cC$-module category, and it is exact if and only if the algebra $A$ is semisimple \cite[Theorem 7.1]{CSZ}. In particular, $\mod_A(\cC)$ for semisimple $A$ is always quasi-Frobenius, but not necessarily semisimple.} algebra $B_i$ in $\cC$ and each $\cF_\alpha$ corresponds to the bimodule $E_{ij}$ in $\cC$.
    In fact, it is easy to see that $\rep(Q,\cM_\bullet,\cF_\bullet)$ is equivalent to the category of modules over the associated path (tensor) algebra in $\cC$ as defined in \cite[Definition 3.3]{HS_species} and also studied in \cite[Definition 3.2]{EKW_tensoralg}.
    From \cref{prop:species-1IG} it follows that the category of modules over these path algebras associated to an acyclic $\cC$-species is always $1$-Iwanaga--Gorenstein.
\end{remark}

\begin{remark}\label{rmk:pro-species}
    Since the corresponding monomorphism (sub)category (\cite[Definition 3.21]{GKKP_monocat}) of the category of representations of a pro-species is the same as the subcategory of Gorenstein projectives \cite[Proposition 3.8]{Kulshammer_prospecies}, \cref{prop:species-1IG} above provides a different characterisation of the monomorphism category in terms of $\cC$-projectives in this setting.
\end{remark}

%We thus see that $\cC$-species can be used to study categories of modules over $1$-Iwanaga--Gorenstein algebras which carries the structure of a $\cC$-module category.
\subsection{Algebras of Geiss--Leclerc--Schr\"oer}
As examples, we consider the 1-Iwanaga--Gorenstein algebras $H(C,D,\Omega)$ introduced by Geiss--Leclerc--Schr\"oer in \cite{GLS_quiverI}. We show that if the symmetriser $D$ contains at most two values $1$ and $d$, $\mod_{H(C,D,\Omega)}$ can be studied via appropriately defined $\cC$-species.
In what follows, we shall further assume that $\Bbbk$ is a field containing primitive $d$-th roots of unity.

Let $C = (c_{ij}) \in M_n(\mathbb{Z})$ be a symmetrisable generalised Cartan matrix and $\Omega$ an acyclic orientation on the (labelled) graph corresponding to $C$.
We define $Q^\circ\coloneqq Q^\circ(C,\Omega)$ to be the quiver with 
\begin{itemize}
    \item set of vertices $Q_0 = \{1,\ldots,n\}$; and
    \item for each $j \to i \in \Omega$, we have arrows $\alpha_{ij}^{(g)}:j \to i \in Q^\circ_1$ for each $1 \leq g \leq g_{ij}$, where $g_{ij} \coloneqq |\gcd(c_{ij},c_{ji})| = g_{ji}$.
\end{itemize}
We further define $Q\coloneqq Q(C,\Omega)$ to be the quiver obtained from $Q^\circ$ by adding a loop $\varepsilon_i$ to each vertex $i$.

Let $D=\text{diag}(c_1,\ldots,c_n)$ be a symmetriser of $C$ (which is a diagonal matrix).
In this paper, we will restrict to the case where $c_i \in \{1,d\}$ for $d \geq 1$.
Under this assumption, the algebra $H(C,D,\Omega)$ in \cite{GLS_quiverI} has the following simplified definition: 
\begin{definition}[\protect{\cite[\S 1.4]{GLS_quiverI}}]
Suppose the entries $c_i$ of $D$ are either $1$ or $d\geq 1$. 
We define $H(C,D,\Omega) \coloneqq \Bbbk Q/ I$ with $I$ generated by the following relations
\begin{enumerate}
    \item[(H1)] for each $i \in Q_0$, $\varepsilon_i^{c_i} = 0$;
    \item[(H2')] for each $i,j \in Q_0$ such that $c_i=d=c_j$, $\varepsilon_i\alpha^{(g)}_{ij} = \alpha^{(g)}_{ij}\varepsilon_j$ for each $1\leq g\leq g_{ij}$.
\end{enumerate}
\end{definition}

Set $B_\ell \coloneqq \Bbbk[\varepsilon]/\<\varepsilon^\ell\>$.
From the description above it is easy to see that a $H(C,D,\Omega)$-module $V$ is equivalent to giving a $B_{c_i}$-module $V_i$ for each $i \in Q_0^\circ$, and for each arrow $\alpha=\alpha_{ij}^{(g)}:j \to i$ a linear map $V_\alpha:V_j \to V_i$ that is moreover a morphism of $B_d$-modules when $c_i=d=c_j$.
In other words, modules over $H(C,D,\Omega)$ are simply representations over a pro-species \cite{Kulshammer_prospecies}.

Recall that $\mod_{B_d}$ and $\mod_{B_1} = \vec$ are both exact module categories over $\cC_d \coloneqq \mod_{T_d(\zeta)}$ with $T_d(\zeta)$ the Taft algebra (see \cref{eg:Taft} and \ref{eg:exactmod-Taft}).
Hence we can define a $\cC_d$-module structure on $\mod_{H(C,D,\Omega)}$ by upgrading the above pro-species to the following $\cC_d$-species. 
We define $(Q^\circ,\cM_\bullet,\cF_\bullet)$ with $\cM_i = \mod_{B_{c_i}}$, and for each $\alpha=\alpha_{ij}^{(g)}:j \to i$, $\cF_\alpha: \cM_j \to \cM_i$ is the $\cC_d$-module functor that is the identity functor when $c_i=c_j$, the forgetful functor when $c_j=d$ and $c_i=1$ and the extension-of-scalars functor $B_d\otimes_\Bbbk -$ when $c_j=1$ and $c_i=d$.
Note that the $\cC_d$-module functor $B_d\otimes_\Bbbk -$ has a right adjoint given by the forgetful functor, so that
\[
\Hom_{B_d}(B_d \otimes_\Bbbk V_j, V_i) \cong \Hom_\Bbbk(V_j, V_i).
\]
Thus it follows from the description above that $\mod_{H(C,D,\Omega)}$ is equivalent, as abelian categories, to $\rep(Q^\circ, \cM_\bullet, \cF_\bullet)$.
By transferring the $\cC$-module structure, $\mod_{H(C,D,\Omega)}$ is therefore a $\cC_d$-module category.

For the following, we recall that a $H(C,D,\Omega)$-module $V$ is \textbf{locally free} if $V_i$ is a free $\Bbbk[\varepsilon_i]/\<\varepsilon_i^{c_i}\>$-module for all $i \in Q_0$; it is moreover \textbf{$\tau$-locally free} if $\tau^k(V)$ is locally free for all $k \in \mathbb{Z}$. 
\begin{proposition}\label{prop:GLS}
    Let $H(C,D,\Omega)$ ($c_i\in\{1,d\}$) be the algebra defined above and view 
    \[
    \cM \coloneqq \mod_{H(C,D,\Omega)} \cong \rep(Q^\circ, \cM_\bullet, \cF_\bullet)
    \]
    as a module category over $\cC_d = \mod_{T_d(\zeta)}$ as in the preceding discussion.
    Suppose that $C$ is the (generalised) Cartan matrix associated to a Dynkin diagram.
    Then the following are equivalent for indecomposable $V \in \cM$:
    \begin{enumerate}
        \item \label{item:preproj} $V$ is preprojective;
        \item \label{item:preinj} $V$ is preinjective;
        \item \label{item:taulf} $V$ is $\tau$-locally free; 
        \item \label{item:lfrigid} $V$ is locally free and $\Ext_\cM^1(V,V)=0$;
        \item \label{item:PM} $V \in \cP\cM$.
    \end{enumerate}
    Moreover, the indecomposables in $\cP\cM$ are in bijection with the positive roots of the corresponding Dynkin diagram, hence $\cM$ is $\cC$-rep-finite.
\end{proposition}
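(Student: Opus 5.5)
The plan is to import the equivalences (i)$\iff$(ii)$\iff$(iii)$\iff$(iv) and the bijection with positive roots from Geiss--Leclerc--Schr\"oer \cite{GLS_quiverI}. They prove that for Dynkin $C$ the $\tau$-locally free indecomposables are exactly the preprojective ones, equivalently the preinjective ones, equivalently the locally free rigid ones, and that these are parametrised by the positive roots. That paper handles general symmetrisers, so in particular it covers $c_i\in\{1,d\}$. The new content is therefore (iii)$\iff$(v), and we establish it through two inclusions of subcategories. Finally, $\cC$-rep-finiteness follows at once from the root bijection.

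For (v)$\Rightarrow$(iii): by \cref{thm:PM-ARtrans}, $\cP\cM$ is closed under $\tau^{\pm}$. It therefore suffices to show that every object of $\cP\cM$ is locally free. Take $W=P\lact V$ with $P\in\proj(\cC_d)$. Each vertex component $W_i=P\lact V_i$ is projective in the exact module category $\cM_i=\mod_{B_{c_i}}$, i.e.\ it is free over $B_{c_i}$. Local freeness is closed under direct summands, since a summand of a free $B_{c_i}$-module is projective and $B_{c_i}$ is local. Hence every object of $\cP\cM$ is locally free, and so $\tau^kV\in\cP\cM$ is locally free for all $k$.

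For (iii)$\Rightarrow$(v): we use (i), so that $V=\tau^{-k}Q$ for some indecomposable projective $Q$. Since $\proj(\cM)\subseteq\cP\cM$ and $\cP\cM$ is closed under $\tau^{-}$ (\cref{thm:PM-ARtrans}, applied to the almost split sequence starting at each $\tau^{-j}Q$), we get $V\in\cP\cM$. The one point needing care is that $\tau^{-j}Q$ is non-injective at each intermediate step, so that the almost split sequence starting at it exists. This holds because $V\neq0$ is obtained along the way, using the convention that preprojective means $\tau^{-k}Q\neq 0$.

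The main obstacle is the faithfulness of the citation: GLS's characterisation of $\tau$-locally free modules as preprojective uses their specific algebras, and we must check that our relation (H2') is precisely their $H(C,D,\Omega)$ for $c_i\in\{1,d\}$. Granted this, counting indecomposables of $\cP\cM$ via the bijection with positive roots (finitely many for Dynkin type) gives an additive generator of $\cP\cM$, so $\cM$ is $\cC$-rep-finite.
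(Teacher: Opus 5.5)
Your proposal is correct and follows essentially the same route as the paper. The equivalences (i)--(iv) and the root bijection are imported from \cite[Theorem 1.3]{GLS_quiverI}. The implication (i)$\Rightarrow$(v) comes from $\proj(\cM)\subseteq\cP\cM$ together with $\tau^{\pm}$-closure (\cref{thm:PM-ARtrans}). The implication (v)$\Rightarrow$(iii) comes from local freeness of summands of $P\lact W$, which follows from exactness of the $\cM_i$, combined with the same closure. Your extra checks are sound and only make explicit what the paper leaves implicit: that the intermediate $\tau^{-j}Q$ are non-injective, and that (H2') matches the GLS relations when $c_i\in\{1,d\}$.
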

\begin{proof}
    The equivalence between \ref{item:preproj}, \ref{item:preinj}, \ref{item:taulf} and \ref{item:lfrigid} is given in \cite[Theorem 1.3(ii)]{GLS_quiverI}.
    
    $\ref{item:preproj} \implies \ref{item:PM}$: If $V$ is preprojective, then $V \in \cP\cM$ since $\proj(\cM)\subseteq \cP\cM$ and $\cP\cM$ is closed under Auslander--Reiten translations by \cref{thm:PM-ARtrans}.
    
    $\ref{item:PM} \implies \ref{item:taulf}$: On the other hand, any $V \in \cP\cM$ is locally free, since it is a summand of an object of the form $P\lact W$ for some non-zero $P \in \proj(\cC_d)$, and $(P\lact W)_i = P\lact W_i$ is necessarily projective in $\mod_{\Bbbk[\varepsilon_i]/\<\varepsilon_i^{c_i}\>}$ (by exactness), which has to be free since the unique indecomposable projective is $\Bbbk[\varepsilon_i]/\<\varepsilon_i^{c_i}\>$.
    It follows that $V \in \cP\cM$ is $\tau$-locally free since $\cP\cM$ is closed under Auslander--Reiten translations by \cref{thm:PM-ARtrans}.

    The final statement follows from \cite[Theorem 1.3(i)]{GLS_quiverI}.
\end{proof}

In light of the result above, it is an interesting problem to classify $\cC$-species whose representation categories are $\cC$-rep-finite. When $\cC$ is semisimple, classification for the special case of $\cC$-species known as $\cC$-quivers (i.e.\ $\cM_i=\cC$) is done in \cite[Theorem A]{EH_fusionquiver}, where the classification is controlled by Coxeter--Dynkin diagrams which classify finite Coxeter groups (though in this case $\cC$-rep-finite is the same as representation-finite since $\cC$ is semisimple).

\printbibliography
\end{document}